\newcommand{\cyrL}{\mbox{\usefont{T2A}{\rmdefault}{m}{n}\CYRL}}
\numberwithin{equation}{section}
\newtheorem{thm}{Theorem}
\numberwithin{thm}{section}
\newtheorem{mainthm}{Main Theorem}
\newtheorem{lem}[thm]{Lemma}
\newtheorem{prp}[thm]{Proposition}
\newtheorem{cor}[thm]{Corollary}
\theoremstyle{definition}
\newtheorem{defn}[thm]{Definition}
\newtheorem{ex}[thm]{Example}
\newtheorem{cons}[thm]{Construction}
\theoremstyle{remark}
\newtheorem{rem}[thm]{Remark}
\newtheorem*{convention}{Convention}
\newcommand{\WronglyDeclarePairedDelimiter}[3]{%
  \expandafter\DeclarePairedDelimiter\csname RIGHT\string#1\endcsname{#2}{#3}%\lstinputlisting[language=Octave]{BitXorMatrix.m}
  \newcommand#1{%
    \@ifstar{\csname RIGHT\string#1\endcsname}
            {\csname RIGHT\string#1\endcsname*}%
  }%
}
\WronglyDeclarePairedDelimiter{\abs}{\lvert}{\rvert}
\WronglyDeclarePairedDelimiter{\norm}{\lVert}{\rVert}
\newcommand{\df}{\textit}
\newcommand{\union}{\cup}
\newcommand{\inter}{\cap}
\newcommand{\mc}{\mathcal}
\newcommand{\mf}{\mathfrak}
\renewcommand{\H}{\mathbb{H}}
\newcommand{\R}{\mathbb{R}}
\renewcommand{\C}{\mathbb{C}}
\newcommand{\N}{\mathbb{N}}
\newcommand{\Q}{\mathbb{Q}}
\newcommand{\Sph}{\mathbb{S}}
\newcommand{\Id}{\mathsf{Id}}
\DeclareMathOperator{\PSL}{\mathsf{PSL}}
\DeclareMathOperator{\SL}{\mathsf{SL}}
\DeclareMathOperator{\GL}{\mathsf{GL}}
\DeclareMathOperator{\Aut}{Aut}
\DeclareMathOperator{\Isom}{Isom}
\DeclareMathOperator{\Stab}{Stab}
\DeclareMathOperator{\tr}{tr}
\DeclareMathOperator{\Hom}{Hom}
\DeclareMathOperator{\Hol}{Hol}
\begin{document}
\zcsetup{noabbrev, cap}

\title[Expansion joints in hyperbolic manifolds]{Expansion joints in hyperbolic manifolds}
\author[A. Elzenaar]{Alex Elzenaar}
\address{School of Mathematics, Monash University, Melbourne}
\email{alexander.elzenaar@monash.edu}
\thanks{The author was supported by an Australian Government Research Training Program (RTP) Scholarship during the period that this work was undertaken. This document was prepared without the use of any generative AI}

\subjclass[2020]{Primary 57K35; Secondary 20H10, 52B70, 57K10, 57K32, 57M50, 58H15}
\keywords{Kleinian groups, cone manifolds, two-bridge links, unknotting tunnels, Borromean rings, ideal octahedra, Poincar\'e polyhedron theorem, fully augmented links,
geometrically isolated cusps, deformations of hyperbolic structures}

\begin{abstract}
  Deformations of hyperbolic manifolds through metrics with cone singularities along closed loops were first studied by Thurston as continuous realisations of Dehn
  fillings. Instead of gluing singular solid tori into rank $2$ cusps, we glue singular $2$-handles into rank $1$ cusps. To do this we find substructures
  within which the hyperbolic metric can be `fractured' in a controlled way by direct manipulation of a fundamental polyhedron, changing the cone angle around
  an ideal arc to interpolate between cusped hyperbolic manifolds and hyperbolic manifolds with conformal surfaces on the visual boundary.
  As an application, we use cone deformations of a family of arithmetic manifolds derived from the Borromean rings to show that the upper unknotting tunnels of highly
  twisted $2$-bridge links can be drilled out by cone deformations through pinched negatively curved metrics. Finally we show that our structures arise naturally
  in fully augmented links, providing a large family of examples.
\end{abstract}

\maketitle
\section{Introduction}
Hyperbolic cone manifolds (see \cite[\S3]{boileau05} for definitions) provide geometric structures which interpolate between complete hyperbolic metrics on different topological manifolds: hyperbolic cusps are
limiting cases of cone singularities where the cone angle is $0$, and arcs away from the singular locus are limiting cases where the cone angle is $2\pi$. The holonomy
group of a hyperbolic cone manifold is a subgroup of $ \PSL(2,\C) $ but is only discrete if the cone angles are submultiples of $ 2\pi $ (i.e.\ the cone manifold is an
orbifold). It was observed by Thurston~\cite[\S4.4--4.5]{thurstonN} (see also \cite{thurston98}) that a path of cone manifolds in which a cone angle around a closed loop is increased
from $ 0 $ to $2\pi$ is a continuous realisation of a Dehn filling, with slope depending on the choice of holonomy element which becomes elliptic. This idea has been used
by Brock and Bromberg~\cite{brock04,bromberg04} to prove a version of the density conjecture for Kleinian groups, and by Hodgson and Kerckhoff~\cite{hk05} and Futer, Purcell,
and Schleimer~\cite{futer22b,futer22} to compare the hyperbolic metric on a link complement to that on its Dehn fillings by showing that the metric on a family of interpolating cone manifolds is controlled.

We wish to study the geometry of finite volume hyperbolic $3$-manifolds like link complements by cone-deforming them into manifolds with nontrivial conformal boundary
in a controlled way. This is motivated by questions about geodesicity of certain embedded arcs in link complements
and by a more general need for effective tools that can compare finite and infinite covolume Kleinian groups. To model the
topological operation of gluing in a $2$-handle across a rank $1$ cusp with a path of cone manifolds, we need to be able to deform cone angles around an ideal arc
continuously through the interval $ [0,2\pi] $. An abstract framework to prove existence of cone deformations was a major programme of work in the early 21st century
building on ideas of Hodgson and Kerckhoff~\cite{hk98}, see also Wei\ss~\cite{weiss05,weiss07} and Kojima~\cite{kojima98} for the case that the singular locus
is a link and Wei\ss~\cite{weiss13} and Montcouquiol~\cite{montcouquiol13} for the case that the singular locus is a graph. Unfortunately, to apply the global results
of this theory in practice it is necessary to obtain detailed technical estimates such as injectivity radius bounds. One of the major difficulties
is that there may be some `critical angle' where the metric ceases to be hyperbolic (see e.g.\ \cite{hodgson,porti02,cooper18,heusener01,porti98}). Thus
to work with specific examples it is often still necessary to construct the deformations explicitly by hand, as in work of Akiyoshi~\cite{akiyoshi18}, Yoshida~\cite{yoshida22}, and the author~\cite{elzenaar24c}.

In this paper, we exhibit and study substructures in hyperbolic $3$-manifolds which isolate a family of rank $2$ cusps and allow arcs joining them to be cone-deformed
to form new rank $1$ cusps. In the process the metric around the cusps is fractured so that they bubble into punctured surfaces on the conformal boundary, while the remainder
of the manifold remains geometrically controlled. We call these structures \textit{expansion joints} since they behave like expansion joints on a bridge, allowing two halves
of the manifold to move apart and isolating all stress in a single location. Other geometric isolation phenomena were observed by Neumann and Reid~\cite{neumann91,neumann93},
see also the later work of Kapovich~\cite{kapovich92} and Calegari~\cite{calegari96,calegari01}; they gave examples of $3$-manifolds in which some cusps can be filled via a
cone deformation while preserving the Euclidean structure on other cusps. After doubling across the conformal boundary, our form of isolation is similar except that there is a
rank $2$ cusp which is transverse to the region containing cone deformations. Another approach to the study of cusp shape changes upon cone deformations is due to Purcell~\cite{purcell08},
who used the Hodgson--Kerckhoff theory to bound changes to the Euclidean metrics on cusp tori; our method gives stronger results when it applies, but requires control over
a local polyhedral decomposition.

\begin{figure}
  \centering
  \begin{subfigure}[c]{.24\textwidth}
    \centering
    \includegraphics[width=\textwidth]{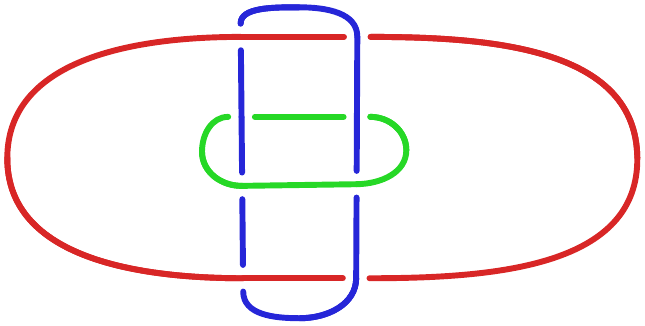}
    \caption{Borromean rings.\label{fig:borromean_rings}}
  \end{subfigure}\hspace{1em}
  \begin{subfigure}[c]{.24\textwidth}
    \centering
    \includegraphics[width=\textwidth]{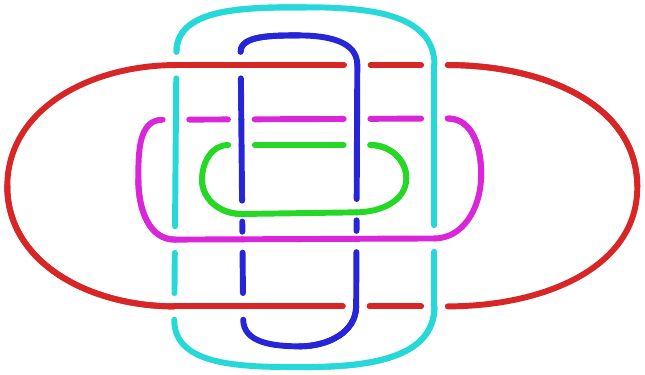}
    \caption{$n=2$.}
  \end{subfigure}\hspace{1em}
  \begin{subfigure}[c]{.24\textwidth}
    \centering
    \includegraphics[width=\textwidth]{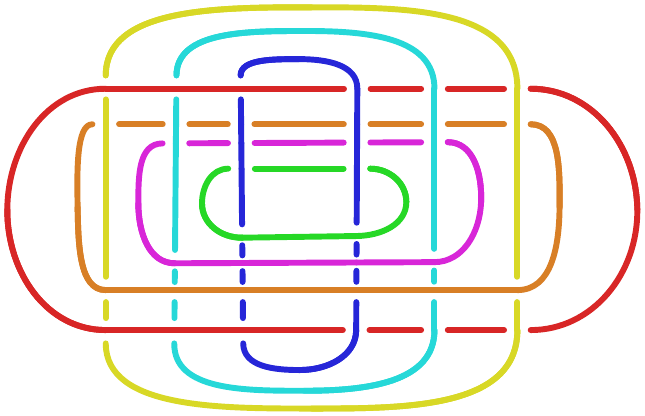}
    \caption{$n=3$.}
  \end{subfigure}
  \caption{The $n$-stacked Borromean rings for small $n$.\label{fig:stacked_rings}}
\end{figure}

\begin{figure}
  \centering
  \begin{subfigure}[t]{.42\textwidth}
    \centering
    \labellist
    \small\hair 2pt
    \pinlabel {$\ast$} [l] at 193 113
    \endlabellist
    \includegraphics[height=.1\textheight]{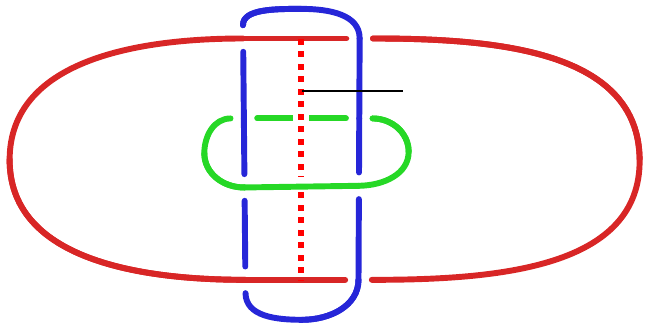}
    \caption{Cone arc in the link complement.\label{fig:borromean_rings_conearc}}
  \end{subfigure}\hspace{1em}
  \begin{subfigure}[t]{.48\textwidth}
    \centering
    \includegraphics[height=.1\textheight]{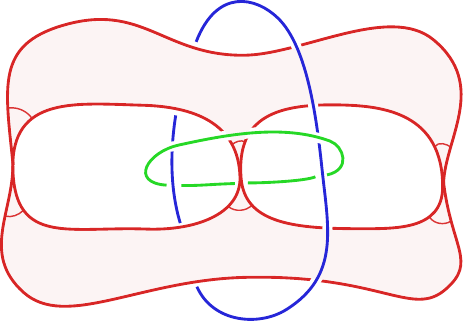}
    \caption{As the angle around $ \ast $ goes to $0$, the arc becomes a rank $1$ cusp.\label{fig:lanternfold_intro}}
  \end{subfigure}
  \caption{The lantern manifold is produced by deforming the cone angle in the Borromean rings complement around the dotted arc $\ast$ from $ 2\pi $ to $0$.}
\end{figure}

Our starting point is a family of arithmetic links called \df{stacked Borromean rings} (\zcref{defn:explicit}) obtained by choosing a distinguished `central'
component of the Borromean rings and repeating the other two components radially outwards $n$ times as shown in \zcref{fig:stacked_rings} to obtain $2n + 1$ loops in total.
The complement in $ \Sph^3 $ of the $n$-stacked Borromean rings, denoted $ B^n $, admits a complete hyperbolic structure.
For $ n = 1 $ (the usual Borromean rings) this is classical: a geometric proof may be found in Thurston~\cite[\S 3.4]{thurstonN}, and
an algebraic proof following Riley may be found in Wielenberg~\cite{wielenberg78}. For $ n > 1 $ this will be proved as \zcref{thm:octahedra}.
In \zcref{prp:cone_deform_simple}, we prove that there is a continuous path of hyperbolic cone
manifolds which interpolate between the Borromean rings complement and the \df{lantern manifold} shown in \zcref{fig:lanternfold_intro} (the name is chosen since the motivation for
its study came from the lantern configuration in mapping class theory~\cite[\S5.1.1]{farb}). The cone metrics on the path each have a single singular arc, labelled with $\ast$
in \zcref{fig:borromean_rings_conearc}, with cone angle ranging from $ 2\pi$ to $0$. Our first main result is a generalisation of this cone deformation to all
stacked Borromean rings:
\begin{mainthm}[\zcref{thm:cone_deform_complex}]\label{thm:main}
  There exists a smooth path $ p : [0, 2\pi] \to \Hom(F_{2n+2}, \PSL(2,\C)) $ so that for each $ \theta > 0 $ the image $ p(\theta) $ is the holonomy group of a
  hyperbolic cone manifold, supported on $B^n$, with a single singular arc indicated in \zcref{fig:intro_cone_arc} of angle $\theta$.
  When $ \theta = 0 $, $ p(\theta) $ is the holonomy group of the complete hyperbolic manifold obtained by drilling the singular arc from $ B^n$.
\end{mainthm}
\begin{figure}
  \begin{subfigure}[t]{.48\textwidth}
    \centering
    \labellist
    \small\hair 2pt
    \pinlabel {$\ast$} [l] at 271 164
    \pinlabel {$\dagger$} [l] at 271 184
    \endlabellist
    \includegraphics[width=.8\textwidth]{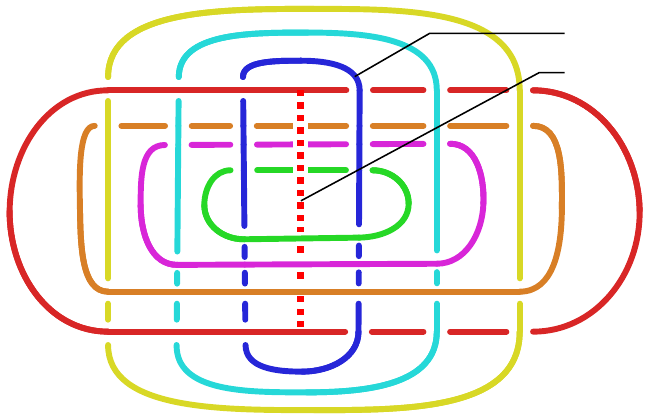}
    \caption{The manifold $ M^n $ is produced from $ B^n $ by decreasing the cone angle around the dotted arc $ \ast $ from $ 2\pi $ to $ 0 $.\label{fig:intro_cone_arc}}
  \end{subfigure}\hfill%
  \begin{subfigure}[t]{.48\textwidth}
    \centering
    \includegraphics[width=.8\textwidth]{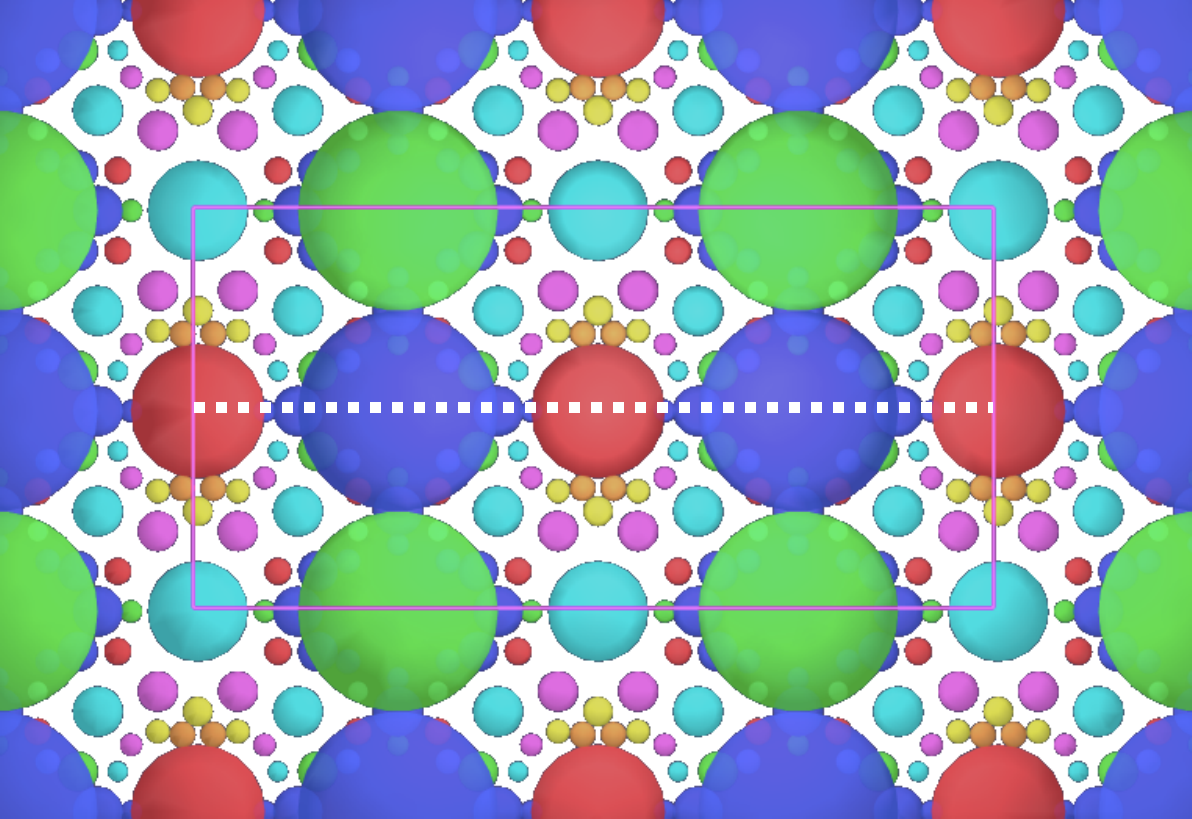}
    \caption{Cusp horoballs for $B^3$, with $\dagger$ at $ \infty $. The geodesic arc $ \ast $ lifts to a family of geodesic arcs above the dotted horizontal line.\label{fig:borromean_7cpt_cusps}}
  \end{subfigure}
  \caption{The combinatorics and geometry of $ B^3 $.}
\end{figure}

One indication that the manifolds $ B^n$ should admit a nice theory for their hyperbolic structure comes from plotting horoball neighbourhoods of their cusps
as shown for $n=3$ in \zcref{fig:borromean_7cpt_cusps}. Our horoball images were produced by \texttt{SnapPy}~\cite{SnapPy32}, and for detailed
information on how to interpret these figures see Thurston~\cite{thurston98}. An important theme in this paper is that the expansion joint cone-deformations which we
construct arise from structures that can be guessed from these figures.

\zcref[S]{thm:main} is proved by first using the symmetries visible in the cusp diagram to decompose $ B^n $ into ideal right-angled octahedra and then deforming the induced fundamental
polyhedron in $\H^3$. Decompositions of knot and link complements into right-angled octahedra are of independent interest in knot theory, due to their connections with
number theory and the geometry of circle packings, see e.g.\ \cite{ibarra25,pinsky23,purcell09}. As an application of \zcref{thm:main}, we prove in \zcref{cor:unknotting_tunnel}
that the upper unknotting tunnel of a highly twisted two-bridge knot can be cone-deformed from angle $ 2\pi $ to angle $0$ through pinched negatively curved metrics which have
constant sectional curvature $-1$ in a neighbourhood of the cone singularity; it is this application which originally motivated
the definition of stacked Borromean rings. This application generalises to all manifolds containing the sorts of expansion joints which we study. We also
provide a fast proof of a conjecture of Sakuma, that any the upper unknotting tunnel of any hyperbolic two-bridge link can be cone-deformed from angle $ 2\pi $ to angle $0$
through \textit{hyperbolic} metrics, based on an idea supplied by the referee in the context of lantern manifolds. This is done in \zcref{rem:sakuma_conjecture}, but we
believe this proof is probably known to some researchers already even though it does not seem to be in found anywhere in the literature.

The specific example of $ B^n$ is just a convenient setting for describing our methods that is complex enough to show most of the interesting features, and
in the final section of the paper we describe a more general kind of expansion joint:
\begin{mainthm}[\zcref{thm:concave_lens}]\label{thm:main2}
  There exists a polyhedral substructure $ \mc{L} $, which can be detected locally in a hyperbolic $3$-manifold and which deformation-retracts onto
  an embedded geodesic surface, that acts as an expansion joint. That is, its existence in a manifold $M$ allows the construction of a smooth path of cone manifold structures on $ M $
  that, in the limit, deforms a family of rank $2$ cusps into a family of embedded thrice-punctured spheres, without modifying the hyperbolic structure of $ M $ away from $ \mc{L} $.
\end{mainthm}
As an application of \zcref{thm:main2}, we show in \zcref{prp:fal} that all holonomy groups of fully augmented link complements can be obtained by taking an infinite covolume Kleinian
group and continuously deforming a parabolic element through infinite order elliptics until it becomes the identity in such a way that all intermediate groups
are holonomy groups of cone manifolds with no unexpected singularities. This exhibits every fully augmented link group as the starting point of a smooth arc embedded inside some character variety, parameterising
controlled indiscrete groups, that ends on the boundary of a nontrivial quasiconformal deformation space. Since every link in $ \Sph^3 $ arises as a Dehn filling of a fully augmented link,
by applying the same techniques as \zcref{cor:unknotting_tunnel} we obtain cone deformations for pinched negatively curved metrics on all hyperbolic links in $ \Sph^3 $ that join them to a pinched
metric on an infinite-volume-hyperbolisable $3$-manifold.

In order for the paper to be self-contained we include, in \zcref{sec:poincare}, a version of the Poincar\'e polyhedron theorem for cone manifolds which is well-known but to the best
of our knowledge does not appear explicitly in the literature. We also indicate how to write down Maskit combination theorems for cone manifold holonomy groups. This is an important tool in our
general programme to apply the theory of Kleinian groups to well-behaved indiscrete subgroups of $\PSL(2,\C) $ in order to join different islands of discreteness
in the character variety.

\begin{convention}
  All polyhedra are metric polyhedra (i.e.\ are locally modelled by intersections of half-spaces). Any exceptions will
  be called \df{combinatorial polyhedra}.
\end{convention}

\subsection*{Acknowledgments}
I thank Jessica Purcell for discussion surrounding this work. I also thank the anonymous referees for very helpful comments, particularly providing the central ideas
for \zcref{rem:hk_deformation}, \zcref{rem:sakuma_conjecture}, and \zcref{rem:hk_fal}. Many images of knot diagrams and cusp horoballs were produced by the \texttt{SnapPy} software~\cite{SnapPy32}.

\section{The classical Borromean rings}\label{sec:borromean}
We consider the topological $3$-manifold $M$ obtained by drilling two closed loops from a genus $2$ handlebody as in \zcref{fig:lanternfold}. The group
$ \pi_1(M) $ is generated by the four loops $ A, X, Y, Z $ shown in the figure. The manifold $ M $ has a complete hyperbolic structure if and only if there exists
a faithful discrete representation $ \rho : \pi_1(M) \to \PSL(2,\C) $ such that $ M \simeq_{\mathrm{homeo.}} \H^3/\rho(\pi_1(M)) $. We will construct
such a representation, with the additional condition that the loops $ Y $, $ Z $, and $ ZY^{-1} $ are homotopic to rank $1$ cusps on the conformal boundary of $M$, as in \zcref{fig:lanternfold_intro};
this forces $ \rho $ to be a maximal cusp representation on the boundary of the quasiconformal deformation space of all hyperbolic structures, and in particular
it will be rigid. Identifying the four generators of $ \pi_1(M) $ with their images in $ \PSL(2,\C) $, the representation must satisfy the conditions in \zcref{tab:equations}.
These conditions come from the usual relator for a genus $2$ surface group, together with commutation conditions for the torus boundary components;
we write $ [R,S] $ for the commutator $ RSR^{-1}S^{-1} $. It is not clear \textit{a priori} that these relations are sufficient to pin down the hyperbolic structure,
but we will show in \zcref{prp:borromean_variety} that they are.

\begin{figure}
  \centering
  \includegraphics[width=.7\textwidth]{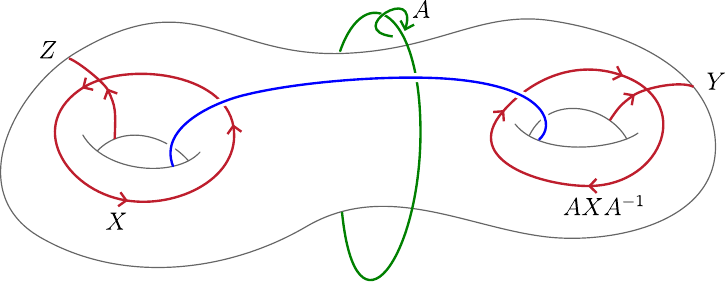}
  \caption{The lantern manifold $M$: a genus $2$ handlebody with two drilled loops, one green (with meridian $A$) and one blue (with meridian $X$). Here, and elsewhere in the paper, manifolds are drawn from the perspective of a viewer in the interior. \label{fig:lanternfold}}
\end{figure}

\begin{table}
  \centering
  \caption{Equations cutting out the character variety.\label{tab:equations}}
  \begin{tabular}{ll}\toprule
    \multicolumn{2}{l}{\textsl{Surface:}}\\
    Rank $1$ cusps & $ \tr^2 Z = \tr^2 Y = \tr^2 ZY^{-1} = 4 $\\
    Surface relation & $ [Z^{-1}, X] [Y^{-1}, AXA^{-1}]^{-1} = \Id $\\\midrule
    \multicolumn{2}{l}{\textsl{Vertical rank $2$ cusp:}}\\
    Meridian & $ \tr^2 A = 4 $\\
    Longitude & $ \tr^2\, [ Z^{-1}, X] = 4 $\\
    Abelian & $ [ A, [ Z^{-1}, X] ] = \Id $\\\midrule
    \multicolumn{2}{l}{\textsl{Horizontal rank $2$ cusp:}}\\
    Meridian & $ \tr^2 X = 4 $\\
    Longitude & $ \tr^2 ZA^{-1}Y^{-1}A = 4$\\
    Abelian & $ [X, ZA^{-1}Y^{-1}A] = \Id $\\\bottomrule
  \end{tabular}
\end{table}

We will convert these conditions into a set of polynomial equations in $ \GL(2, \C[\vec{x}]) $ where $ \vec{x} $ is some suitable list of indeterminates. Up to conjugacy, we may take
\begin{displaymath}
  A = \begin{bmatrix} 1 & 1 \\ 0 & 1 \end{bmatrix}\quad\text{and}\quad X = \begin{bmatrix} 1 & 0 \\ a_3 & 1 \end{bmatrix}.
\end{displaymath}
This takes care of $ \tr^2 A = 4 $ and $ \tr^2 X = 4 $. (It is not \textit{a priori} clear that we should be able to choose
a global lift from $\PSL$ to $\SL$ where $ \tr A $ and $ \tr X $ have equal sign, but the assumption is justified \textit{a posteriori}
as it leads to a solution.) We now let
\begin{displaymath}
  Y = \begin{bmatrix} b_1 & b_2 \\ b_3 & b_4 \end{bmatrix}\quad\text{and}\quad Z = \begin{bmatrix} c_1 & c_2 \\ c_3 & c_4 \end{bmatrix}
\end{displaymath}
where the $ b_i $ and $ c_i $ are complex indeterminates. We impose on these the determinant conditions $ \det Y = \det Z = 1 $, the remaining trace conditions in the table,
and the surface relation $ [Z^{-1}, X] = \pm [Y^{-1}, AXA^{-1}] $. Since we chose $ A $ and $ X $ to have fixed points at $ \infty $ and $0$ respectively
and parabolics commute if and only if they share fixed points, $[ A, [ Z^{-1}, X] ] = \Id$ if and only if $ [ Z^{-1}, X]_{(2,1)} = 0 $ (here, the subscript denotes
matrix indexing; i.e.\ we are requiring the lower-left entry of the matrix $ [Z^{-1},X] $ to be $0$) and $ [X, ZA^{-1}Y^{-1}A] = \Id $ if and only if $ (ZA^{-1}Y^{-1}A)_{(1,2)} = 0 $.
All together we obtain an \textit{a priori} overdetermined polynomial system, which has a $0$-dimensional solution set:

\begin{prp}\label{prp:borromean_variety}
  There are exactly $16$ solutions in $ \C^9 $ to the polynomial system just described which satisfy the non-degeneracy conditions
  \begin{displaymath}
    a_3 \neq 0,\quad [ Z^{-1}, X]_{(1,2)} \neq 0,\quad\text{and}\quad (ZA^{-1}Y^{-1}A)_{(2,1)} \neq 0,
  \end{displaymath}
  which prevent various parabolics from degenerating to the identity. Eight of these give groups conjugate after change of generators to the group $G_{2\pi} $ with parameters
  \begin{displaymath}
    (a_3,b_1,b_2,b_3,b_4,c_1,c_2,c_3,c_4) = (-2i, -i, 1, -2i, 2 + i, i, -1, 2i, -2 - i).
  \end{displaymath}
  The remaining eight give groups conjugate after change of generators to the group $ G_0 $ with parameters
  \begin{displaymath}
    (a_3,b_1,b_2,b_3,b_4,c_1,c_2,c_3,c_4) = \left(-4i, -3i, \frac{3}{2} + 2i, -4i, 2 + 3i, i, -\frac{1}{2}, 4i, -2 - i\right).
  \end{displaymath}
\end{prp}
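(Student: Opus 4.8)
The plan is to treat this as an explicit elimination problem over $\Q(i)$. First I would write down all the defining polynomials concretely: with $A$ and $X$ fixed as in the text, the unknowns are $a_3$ and the eight matrix entries $b_i,c_i$, and the constraints are the two determinant equations $\det Y = \det Z = 1$, the five trace equations $\tr^2 Y = \tr^2 Z = \tr^2 ZY^{-1} = \tr^2[Z^{-1},X] = \tr^2(ZA^{-1}Y^{-1}A) = 4$, the four scalar equations coming from the matrix identity $[Z^{-1},X] = \epsilon\,[Y^{-1},AXA^{-1}]$ for a sign $\epsilon\in\{\pm1\}$, and the two vanishing-entry equations $[Z^{-1},X]_{(2,1)} = 0$ and $(ZA^{-1}Y^{-1}A)_{(1,2)} = 0$. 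All of these have coefficients in $\Z$ (no $i$ enters, since $A$ and $X$ are integral matrices), so the ideal they generate is defined over $\Q$; this is the \emph{a priori} overdetermined system in the statement.

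Next I would solve it. Because the system is overdetermined, the first substantive point is that the listed equations already cut out a $0$-dimensional scheme; I would verify this by computing a Gr\"obner basis (in \texttt{Magma} or \texttt{Macaulay2}, or by hand exploiting the structure). Concretely, I would split into the two subsystems indexed by $\epsilon = \pm1$, and in each case use the equations that are linear in the entries — the two vanishing-entry conditions together with several components of the matrix identity — to solve for as many of the $b_i,c_i$ as possible in terms of $a_3$, then feed the result into the determinant and trace equations. This reduces each subsystem to a univariate eliminating polynomial; one checks that it factors over $\Q$ into linear and quadratic factors all of whose roots lie in $\Q(i)$, so that every solution has explicit Gaussian-rational coordinates and the complete solution list can be written down.

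Then I would cull and classify. The three non-degeneracy inequalities $a_3\neq0$, $[Z^{-1},X]_{(1,2)}\neq0$, $(ZA^{-1}Y^{-1}A)_{(2,1)}\neq0$ exactly discard the spurious branches, on which one of the three parabolics $X$, the longitude $[Z^{-1},X]$ of the vertical cusp, or the longitude of the horizontal cusp collapses to the identity; after removing these one is left with $16$ points, and I would confirm with a Jacobian-rank (equivalently, radical-membership) check that each is a reduced point, so the count is exact. To see that these $16$ points fall into the two stated classes I would let the symmetry group of the lantern manifold act on the solution set: the self-homeomorphisms permuting the drilled loops, realised as changes of the free generating set $A,X,Y,Z$, together with the $\SL(2,\C)\to\PSL(2,\C)$ lift ambiguities in $Y$ and $Z$, permute the solutions, and I would check that this action has exactly two orbits of size $8$, with $G_{2\pi}$ (the Borromean rings complement, cone angle $2\pi$ along $\ast$) in one and $G_0$ (the complete lantern manifold, cone angle $0$) in the other; conjugating each orbit back into the chosen normal form for $A$ and $X$ yields the two displayed parameter vectors. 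Complex conjugation, which preserves the ideal, identifies conjugate groups here (the Borromean rings are amphichiral), so it only permutes within orbits.

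The main obstacle I expect is the elimination step itself: a Gr\"obner basis of an overdetermined commutator system can be large and the intermediate polynomials unpleasant, so the computation must be organised by hand — using $\det = 1$ and the trace relations to keep degrees down and the vanishing-entry equations to eliminate variables linearly — rather than run blindly. A secondary subtlety is bookkeeping: one must be careful that ``$16$ solutions'' is asserted as $16$ reduced points (hence the Jacobian check), that the non-degeneracy locus is Zariski open so that deleting the degenerate branches cannot merge or split the two orbits, and that each orbit is correctly matched with its geometric meaning, which is where the symmetry action and the later polyhedral description do the work.
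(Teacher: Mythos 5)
Your proposal is correct and follows essentially the same route as the paper, which simply solves the explicit polynomial system exactly by computer algebra (via \texttt{Reduce} in \texttt{Mathematica}), discards the degenerate branches using the stated inequalities, and notes that the grouping of the $16$ solutions into two conjugacy classes of $8$ is immediate from the symmetries of the solution set. Your additional care about the sign $\epsilon$ in the lifted surface relation, the reducedness of the $0$-dimensional scheme, and the explicit orbit analysis is a sound elaboration of what the paper leaves implicit in the phrase ``essentially immediate from the symmetries of the solutions.''
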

\begin{proof}
  The system of equations may be solved exactly by computer algebra systems (e.g.\ using the \texttt{Reduce} command in \texttt{Mathematica}) to verify
  that there are $16$ solutions; the conjugacies that can be defined to separate these into the two sets of eight are essentially immediate from the symmetries
  of the solutions.
\end{proof}

\begin{figure}
  \begin{subfigure}[t]{0.45\textwidth}
    \centering
    \includegraphics[width=\textwidth]{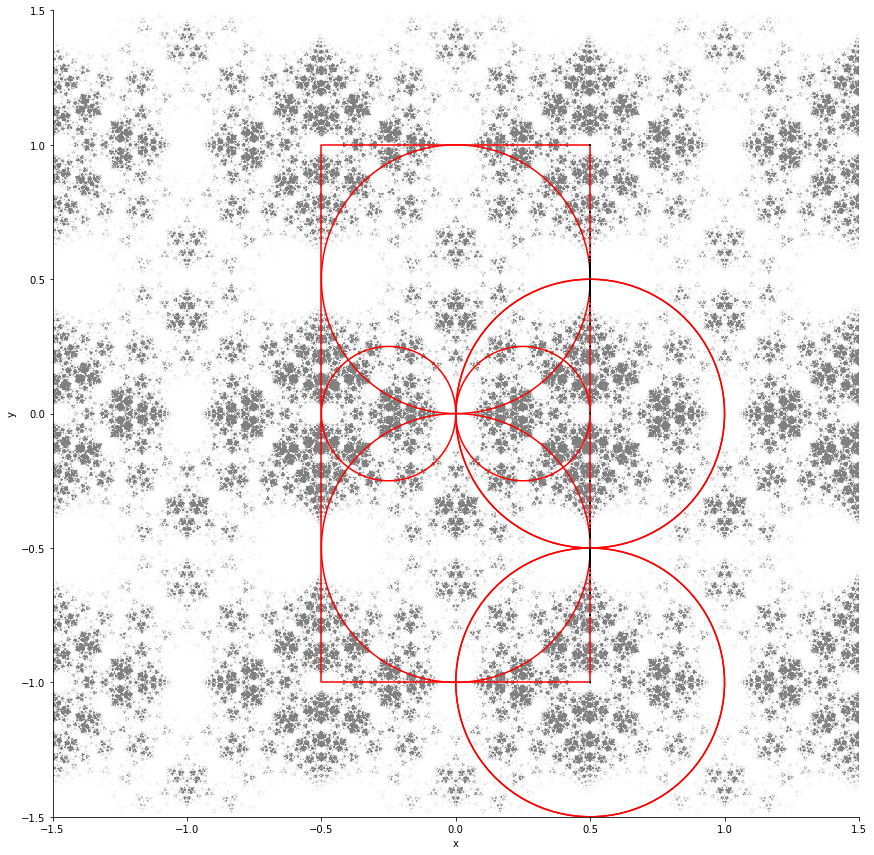}
    \caption{Limit set and isometric circles of $X$ and $ ZA^{-1}Y^{-1}A $ (the respective meridian and longitude of the cusp descending from $0$), and $Y=Z$ (the meridian of the cusp coloured red in \zcref{fig:borromean_rings}).\label{fig:limset_borromean}}
  \end{subfigure}\hfill
  \begin{subfigure}[t]{0.45\textwidth}
    \centering
    \begin{tikzpicture}[scale=1.8]
      \draw [thin, gray!30, step=.25cm,xshift=-1cm, yshift=-0.5cm] (-.5,-1) grid +(3,3);
      \draw [thin, gray!50, ->] (0,-1.575) -- (0,1.575);
      \draw [thin, gray!50, ->] (-1.575,0) -- (1.575,0);
      \draw[fill=none] (0,.5) circle (.5);
      \draw[fill=none] (0,-.5) circle (.5);
      \draw[fill=none] (.5,1) circle (.5);
      \draw[fill=none] (.5,0) circle (.5);
      \draw[fill=none] (.5,-1) circle (.5);
      \draw[fill=none] (-.5,1) circle (.5);
      \draw[fill=none] (-.5,0) circle (.5);
      \draw[fill=none] (-.5,-1) circle (.5);
      \draw (-.5,-1)--(-.5,1)--(.5,1)--(.5,-1)--(-.5,-1);
    \end{tikzpicture}
    \caption{The solid circles and lines give the projection of a fundamental domain onto the Riemann sphere. Grid squares have width $ 0.25 $.\label{fig:fd_borromean}}
  \end{subfigure}
  \caption{Data associated with the Borromean rings group $ G_{2\pi} $.\label{fig:borromean}}
\end{figure}

\begin{ex}[The Borromean rings]
  The group $ G_{2\pi} $ admits the relation $ ZY^{-1} = \Id $. In fact it is essentially the same realisation of the Borromean rings group
  as that constructed by Wielenberg~\cite[Example~8]{wielenberg78}. We show the limit set and fundamental domain in \zcref{fig:borromean}, which should be compared with Figure~12 \textit{op. cit}.
\end{ex}

\begin{figure}
  \begin{subfigure}[t]{0.45\textwidth}
    \includegraphics[width=\textwidth]{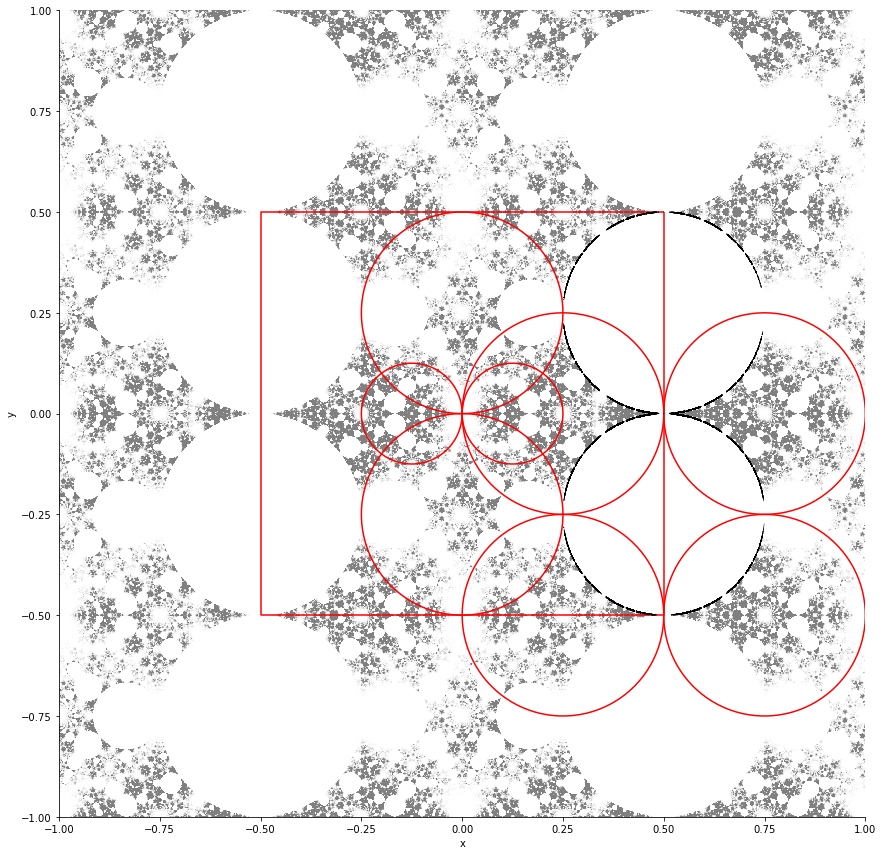}
    \caption{Limit set and isometric circles of $X$ and $ ZA^{-1}Y^{-1}A $ (the respective meridian and longitude of the cusp descending from $0$), and $Y$ and $Z$ (which generate the stabiliser of one of the thrice-punctured sphere components).\label{fig:limset_lantern}}
  \end{subfigure}\hfill
  \begin{subfigure}[t]{0.45\textwidth}
    \centering
    \begin{tikzpicture}[scale=3]
      \draw [thin, gray!30, step=.25cm,xshift=-0.5cm, yshift=-0.5cm] (-.25,-.25) grid +(1.75,1.5);
      \draw [thin, gray!50, ->] (0,-.805) -- (0,.805);
      \draw [thin, gray!50, ->] (-.805,0) -- (1.055,0);
%       \draw [thin, gray!20, step=.25cm,xshift=-0.5cm, yshift=-0.5cm] (-.5,-1) grid +(2,3);
%       \draw [thin, gray!40, ->] (0,-1.55) -- (0,1.55);
%       \draw [thin, gray!40, ->] (-1.05,0) -- (.051,0);
      \draw[fill=none] (0,.25) circle (.25);
      \draw[black,fill=black](0,.25) circle (.1pt) node {\huge\color{black!60} $1$};
      \draw[fill=none] (0,-.25) circle (.25);
      \draw[black,fill=black](0,-.25) circle (.1pt) node {\huge\color{black!60} $2$};
      \draw[fill=none] (.25,0) circle (.25);
      \draw[black,fill=black](.25,0) circle (.1pt) node {\huge\color{black!60} $3$};
      \draw[fill=none] (-.25,0) circle (.25);
      \draw[black,fill=black](-.25,0) circle (.1pt) node {\huge\color{black!60} $4$};
      \draw[fill=none] (.25,-.5) circle (.25);
      \draw[black,fill=black](.25,-.5) circle (.1pt) node {\huge\color{black!60} $5$};
      \draw[fill=none] (-.25,-.5) circle (.25);
      \draw[black,fill=black](-.25,-.5) circle (.1pt) node {\huge\color{black!60} $6$};
      \draw[fill=none] (.25,.5) circle (.25);
      \draw[black,fill=black](.25,.5) circle (.1pt) node {\huge\color{black!60} $7$};
      \draw[fill=none] (-.25,.5) circle (.25);
      \draw[black,fill=black](-.25,.5) circle (.1pt) node  {\huge\color{black!60} $8$};
      \draw (-.5,-.5)--(-.5,.5)--(.5,.5)--(.5,-.5)--(-.5,-.5);
      \draw[fill=none, dotted] (.75,0) circle (.25);
      \draw[fill=none, dotted] (.75,-.5) circle (.25);
      \draw[fill=none, dashed] (.375,0) circle (.125);
      \draw[fill=none, dashed] (.625,0) circle (.125);
    \end{tikzpicture}
    \caption{The solid circles and lines give the projection of a fundamental domain onto the Riemann sphere. The two dotted circles are paired by $ Y $, and the two
             dashed circles are paired by $ ZY^{-1}$. Grid squares have width $ 0.25 $.\label{fig:fd_lantern}}
  \end{subfigure}
  \caption{Data associated with the lantern group $ G_0 $.}
\end{figure}
\begin{table}
  \centering
  \caption{Side-pairing transformations for \zcref{fig:fd_lantern}.\label{tab:pairings}}
  \begin{tabular}{llcll}\toprule
    $1\leftrightarrow 2$ & $X$ &\quad& \\
    $3\leftrightarrow 5$ & $Z$ && $ 4\leftrightarrow 6$ & $ A^{-1} Y A $\\
    $7\leftrightarrow 3$ & $XZX^{-1} $ && $ 8\leftrightarrow 4$ & $ X A^{-1} Y A X^{-1} $\\\midrule
    \multicolumn{5}{c}{\textsl{Translations:} $A$ (horiz.), $ Z^{-1} XZ X^{-1} $ (vert.)}\\\bottomrule
  \end{tabular}
\end{table}

\begin{ex}[The lantern group]
  We claim that $ G_0 $ is the holonomy group of a complete hyperbolic structure on the manifold $M$. Plotting
  the limit set and isometric circles of the various words that appear in \zcref{tab:equations} we obtain \zcref{fig:limset_lantern},
  which suggests the construction of the fundamental domain given in \zcref{fig:fd_lantern} (the four corners of the square
  are at $ \pm \frac{1}{2} \pm \frac{1}{2} i $ and all circles that appear orthogonal or tangent are so). The side-pairings are given in \zcref{tab:pairings}.
  From \zcref{fig:fd_lantern} it is possible to immediately read off an ideal triangulation for the convex core of $ M $.
\end{ex}

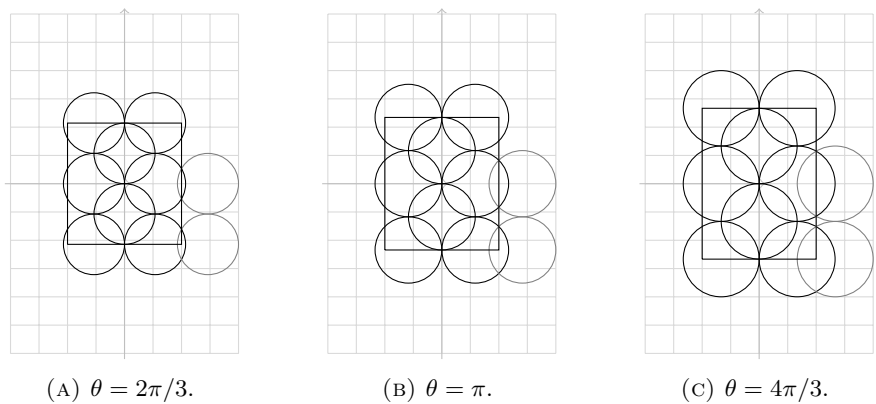
\begin{figure}
  \begin{subfigure}[c]{.33\textwidth}
    \centering
    \begin{tikzpicture}[scale=1.5]
      \draw [thin, gray!30, step=.25cm,xshift=-0.5cm, yshift=-0.5cm] (-.5,-1) grid +(2,3);
      \draw [thin, gray!50, ->] (0,-1.55) -- (0,1.55);
      \draw [thin, gray!50, ->] (-1.05,0) -- (.051,0);
      \draw[fill=none] (0, .2679) circle (.2679);
      \draw[fill=none] (0,-.2679) circle (.2679);
      \draw[fill=none] ( .2679,0) circle (.2679);
      \draw[fill=none] (-.2679,0) circle (.2679);
      \draw[fill=none] (.2679, -.5359) circle (.2679);
      \draw[fill=none] (-.2679,-.5359) circle (.2679);
      \draw[fill=none] ( .2679, .5359) circle (.2679);
      \draw[fill=none] (-.2679, .5359) circle (.2679);
      \draw[fill=none,gray] (.7321,  0) circle (.2679);
      \draw[fill=none,gray] (.7321,-.5359) circle (.2679);
      \draw (-.5,-.5359)--(-.5,.5359)--(.5,.5359)--(.5,-.5359)--(-.5,-.5359);
    \end{tikzpicture}
    \caption{$ \theta = 2\pi/3 $.}
  \end{subfigure}%
  \begin{subfigure}[c]{.33\textwidth}
    \centering
    \begin{tikzpicture}[scale=1.5]
      \draw [thin, gray!30, step=.25cm,xshift=-0.5cm, yshift=-0.5cm] (-.5,-1) grid +(2,3);
      \draw [thin, gray!50, ->] (0,-1.55) -- (0,1.55);
      \draw [thin, gray!50, ->] (-1.05,0) -- (.051,0);
      \draw[fill=none] (0, .2929) circle (.2929);
      \draw[fill=none] (0,-.2929) circle (.2929);
      \draw[fill=none] ( .2929,0) circle (.2929);
      \draw[fill=none] (-.2929,0) circle (.2929);
      \draw[fill=none] (.2929, -.5858) circle (.2929);
      \draw[fill=none] (-.2929,-.5858) circle (.2929);
      \draw[fill=none] ( .2929, .5858) circle (.2929);
      \draw[fill=none] (-.2929, .5858) circle (.2929);
      \draw[fill=none,gray] (.7071,  0) circle (.2929);
      \draw[fill=none,gray] (.7071,-.5858) circle (.2929);
      \draw (-.5,-.5858)--(-.5,.5858)--(.5,.5858)--(.5,-.5858)--(-.5,-.5858);
    \end{tikzpicture}
    \caption{$ \theta = \pi $.}
  \end{subfigure}%
  \begin{subfigure}[c]{.33\textwidth}
    \centering
    \begin{tikzpicture}[scale=1.5]
      \draw [thin, gray!30, step=.25cm,xshift=-0.5cm, yshift=-0.5cm] (-.5,-1) grid +(2,3);
      \draw [thin, gray!50, ->] (0,-1.55) -- (0,1.55);
      \draw [thin, gray!50, ->] (-1.05,0) -- (.051,0);
      \draw[fill=none] (0, .333) circle (.333);
      \draw[fill=none] (0,-.333) circle (.333);
      \draw[fill=none] ( .333,0) circle (.333);
      \draw[fill=none] (-.333,0) circle (.333);
      \draw[fill=none] (.333, -.6667) circle (.333);
      \draw[fill=none] (-.333,-.6667) circle (.333);
      \draw[fill=none] ( .333, .6667) circle (.333);
      \draw[fill=none] (-.333, .6667) circle (.333);
      \draw[fill=none,gray] (.667,  0) circle (.333);
      \draw[fill=none,gray] (.667,-.6667) circle (.333);
      \draw (-.5,-.6667)--(-.5,.6667)--(.5,.6667)--(.5,-.6667)--(-.5,-.6667);
    \end{tikzpicture}
    \caption{$ \theta = 4\pi/3 $.}
  \end{subfigure}
  \caption{Cone-deforming the Borromean rings to the lantern manifold. Each figure shows the fundamental domain for some $ G_{\theta} $.\label{fig:fd_coned}}
\end{figure}

\begin{prp}\label{prp:cone_deform_simple}
  There exists a smooth family of cone manifold holonomy groups $ G_{\theta} $ interpolating between $ G_0 $ and $ G_{2\pi} $; the manifold uniformised
  by $ G_\theta $ for $ \theta \in (0,2\pi] $ is supported on the complement of the Borromean rings, and has an ideal singular arc with cone angle $ \theta $
  along the arc labelled $\ast$ in \zcref{fig:borromean_rings_conearc}; as $ \theta \to 0 $, the manifolds limit onto the lantern manifold and $ G_\theta \to G_0 $.
\end{prp}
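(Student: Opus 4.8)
The plan is to realise $\{G_\theta\}_{\theta\in[0,2\pi]}$ as the groups generated by the side-pairing transformations of an explicit smooth family of fundamental polyhedra $P_\theta\subset\overline{\H^3}$ --- the ones drawn in \zcref{fig:fd_coned} --- and then to conclude via the Poincar\'e polyhedron theorem for cone manifolds proved in \zcref{sec:poincare}. First I would set up $P_\theta$ as the region bounded by four vertical half-planes forming the sides of a $1\times 4r(\theta)$ rectangle, together with a collection of hemispheres of common Euclidean radius $r(\theta)$ and one further pair of faces meeting along the edge destined to carry the cone locus; reading off the tangency and orthogonality relations forced among the circles in \zcref{fig:fd_coned} shows that
\begin{displaymath}
  r(\theta)=\frac{1}{2\bigl(1+\cos(\theta/4)\bigr)}.
\end{displaymath}
This is smooth on $[0,2\pi]$ (the denominator never vanishes), with $r(0)=\tfrac14$ and $r(2\pi)=\tfrac12$, so that $P_0$ is the lantern polyhedron of \zcref{fig:fd_lantern} and $P_{2\pi}$ is the Borromean polyhedron of \zcref{fig:fd_borromean}.

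Second I would write down generator matrices $A_\theta,X_\theta,Y_\theta,Z_\theta\in\PSL(2,\C)$, normalised as in \zcref{prp:borromean_variety} and with entries that are smooth functions of $\cos(\theta/4)$ and $\sin(\theta/4)$, realising the evident $\theta$-deformation of the side-pairing scheme of \zcref{tab:pairings}: $A_\theta$ and $Z_\theta^{-1}X_\theta Z_\theta X_\theta^{-1}$ are the two translations fixing $\infty$, $Y_\theta$ pairs the ``dotted'' hemispheres (gray in \zcref{fig:fd_coned}), and $Z_\theta Y_\theta^{-1}$ pairs the two faces meeting along the cone edge. Here $Z_\theta Y_\theta^{-1}$ is elliptic of rotation angle $\theta$, that is $\tr^2 Z_\theta Y_\theta^{-1}=4\cos^2(\theta/2)$, so for $\theta\in(0,2\pi)$ those two faces bound a wedge of dihedral angle $\theta$ along $\Ax(Z_\theta Y_\theta^{-1})$; as $\theta\to0$ the wedge angle vanishes, its edge recedes to an ideal point, $Z_\theta Y_\theta^{-1}$ becomes parabolic and a new rank $1$ cusp is born (with the small faces of \zcref{fig:fd_lantern} detaching), while as $\theta\to2\pi$ the wedge closes up completely, $Z_\theta Y_\theta^{-1}\to\Id$, and the Borromean side-pairings reappear. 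One checks directly that $A_\theta,X_\theta,Y_\theta,Z_\theta$ satisfy the system of \zcref{tab:equations} with the equation $\tr^2 ZY^{-1}=4$ deleted, so $\theta\mapsto G_\theta$ traces a curve in that relaxed character variety running from $G_0$ to $G_{2\pi}$.

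Third, with $P_\theta$ and the pairings in hand I would verify the hypotheses of the cone-manifold Poincar\'e theorem: (i) that $P_\theta$ is a genuine embedded finite-volume polyhedron for every $\theta\in[0,2\pi]$, with combinatorial type constant on $(0,2\pi)$ and the wedge faces collapsing as $\theta\to2\pi$ --- both of which come down to the circle incidences, hence to the formula for $r(\theta)$; (ii) that the ridge cycles close up, with dihedral angles summing to $2\pi$ around every edge except the cone edge, around which they sum to exactly $\theta$; and (iii) the horoball/parabolicity conditions at every honest cusp. The theorem then identifies the $G_\theta$-quotient as the hyperbolic cone manifold obtained by gluing $P_\theta$, with singular locus the image of the cone edge and cone angle $\theta$. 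Since this gluing gives a fixed underlying manifold for $\theta\in(0,2\pi)$ that limits onto the Borromean complement of \zcref{fig:fd_borromean} as $\theta\to2\pi$, the underlying space is the Borromean rings complement for all $\theta\in(0,2\pi]$, and tracking the cone edge to the $\theta=0$ endpoint --- where it becomes the drilled arc of \zcref{fig:lanternfold_intro} --- identifies the singular arc with $\ast$ of \zcref{fig:borromean_rings_conearc}.

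Finally, smoothness of $\theta\mapsto G_\theta$ is immediate from the explicit formulas, and the $\theta\to0$ limit follows by continuity: $r(\theta)\to\tfrac14$, the wedge angle goes to $0$, $\Ax(Z_\theta Y_\theta^{-1})$ exits through an ideal point, $P_\theta$ converges geometrically to the lantern polyhedron, and the generators converge to a solution of the \emph{full} defining system of $G_0$, hence by the rigidity of \zcref{prp:borromean_variety} to $G_0$ itself up to the change of generators there. The main obstacle is part (ii), together with the non-degeneracy half of part (i): checking that \emph{all} ridge cycles close up with the prescribed angle sums \emph{simultaneously for every} $\theta\in[0,2\pi]$ and that no two faces of $P_\theta$ ever collide. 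This is a finite but delicate trigonometric computation in which the precise form of $r(\theta)$ is essential, and for which the pictures in \zcref{fig:fd_coned} and the already-understood endpoints \zcref{fig:fd_lantern} and \zcref{fig:fd_borromean} provide the scaffolding.
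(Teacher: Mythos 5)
Your proposal is correct and follows essentially the same route as the paper: deform the fundamental polyhedron of \zcref{fig:fd_coned} with circle radius $r(\theta)=\frac{1}{2(1+\cos(\theta/4))}$ (identical to the paper's $\tfrac14\sec^2$ formula after a half-angle identity), keep the side-pairings of \zcref{tab:pairings}, and invoke the cone-manifold Poincar\'e theorem of \zcref{sec:poincare}. The only divergence is that the paper does not introduce a dedicated pair of wedge faces along $\Ax(ZY^{-1})$: in its polyhedron the cone angle $\theta$ arises instead as the sum over a single edge cycle of four edges each of dihedral angle $\theta/4$, namely the intersections of the domes with the vertical walls $\pm\tfrac12+\R i$.
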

\begin{proof}
  The deformation is defined by continuously moving the walls of the fundamental domain while preserving its combinatorics and geometry away
  from the faces paired by $ ZY^{-1} $. From a comparison of \zcref{fig:fd_lantern} with \zcref{fig:fd_borromean}, a na\"ive description of
  the deformation is that we smoothly change the aspect ratio of the rank two lattice fixing $ \infty $ from $ 1:2 $ to $ 1:1 $, as in \zcref{fig:fd_coned}.

  To give a formal definition of the polyhedra used to define $ G_\theta $, observe that as the circles paired by $ Y $ and the circles paired
  by $ Z $ begin to overlap as the aspect ratio of the rectangle is changed, the angle between them is twice the angle between the two isometric
  circles of $ ZY^{-1} $. Thus, for the full deformation, as the angle between the two isometric circles of $ ZY^{-1} $ is deformed from $ 0 $
  (at $G_0$) to $ 2\pi $ (at $ G_{2\pi} $), the angle between the isometric circle of $ Z $ and the isometric circle of $ Y $ deforms from $ 0 $ to $ \pi $.
  Since we have normalised $ A $ to be the translation $ z \mapsto z+1 $, the width of the rectangle is fixed and so the height must vary to change the aspect
  ratio. The radius of the isometric circles of $ Y $ and $ Z $ is $1/4$ of the height of this rectangle, and an elementary trigonometric calculation
  shows that if the angle between the isometric circle of $Y$ and the isometric circle of $Z$ is $ 2\theta $, then the radii of those circles is
  $ r_\theta = \frac{1}{4} \sec^2 \frac{\theta}{2} $.

  This quantity is enough to write down a definition for the polyhedron that is used to produce $ G_{\theta} $.
  When $ \theta < 2\pi $ we take six circles of radius $ r_\theta $ with centres $ \epsilon r + \delta 2r i $ where $ \epsilon \in \{\pm 1\} $, $ \delta \in \{-1,0,1\} $,
  and two circles of radius $ r_\theta $ centred at $ \pm r_\theta i $. These, together with the four lines $ \pm 1/2 + \R i $ and $ \R \pm 2ri $ define
  a hyperbolic polyhedron with $14$ hyperbolic faces and four ideal faces on the Riemann sphere. It has two edges of dihedral angle $ \pi $, namely the intersections of domes with
  the hyperbolic plane lying above the real axis, and all other edges have dihedral angle $ \pi/2 $ except for those arising as intersections
  of domes with the two planes above the vertical lines $ \pm 1/2 + \R i $. There are four of these latter type of edges which each have dihedral angle $ \theta/4 $. When $ \theta \to 2\pi $
  this polyhedron degenerates to the polyhedron for the Borromean rings given in \zcref{fig:fd_borromean} and when $ \theta \to 0 $ it degenerates to the polyhedron
  given in \zcref{fig:fd_lantern}. Since it is combinatorially identical to the latter, we define side-pairings by writing down the uniquely defined M\"obius transformations
  which pair the faces in the same combinatorial pattern as in that group (i.e.\ the pattern of \zcref{tab:pairings}).

  The result then follows from a cone manifold version of the Poincar\'e polyhedron theorem, for instance the version described in \zcref{sec:poincare}, as an exercise in checking
  that all edge sums add to $ 2\pi $, except for a single edge cycle which has angle sum $ \theta $. By our choice of the embedding of the polyhedron in $ \H^3 $
  (e.g.\ choosing the horizontal width of the rectangle to be $1$) these transformations will obey the normalisations that we imposed earlier and so will interpolate
  directly between $ G_0 $ and $ G_{2\pi} $.
\end{proof}

\section{Stacked Borromean rings and lantern manifolds}\label{sec:lantern}

\begin{figure}
  \centering
  \labellist
  \small\hair 2pt
  \pinlabel {rank one cusps} [tl] at 23 13
  \pinlabel {$\mu_1$} [l] at 131 90
  \pinlabel {$\mu_n$} [l] at 131 76
  \pinlabel {$\alpha_1$} [r] at 186 90
  \pinlabel {$\alpha_n$} [r] at 186 76
  \endlabellist
  \includegraphics[width=.7\textwidth]{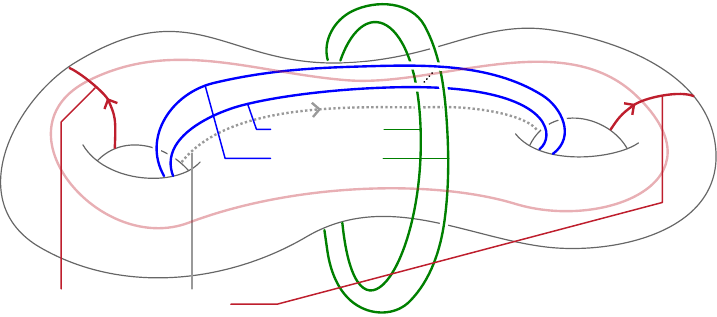}
  \caption{The $n$-stacked lantern manifold $M^n$, with drilled arcs of meridian $ \mu_i $ and $ \alpha_i $ alternating in a stack of crisscrossing drilled arcs. The central cusp $ \chi $ has become a closed loop on the genus $2$ surface, dual to two rank $1$ cusps.\label{fig:lanternfold_stack}}
\end{figure}

\begin{figure}
  \centering
  \labellist
  \small\hair 2pt
  \pinlabel {$\alpha_1$} [l] at 147 122
  \pinlabel {$\alpha_n$} [l] at 147 102
  \pinlabel {$\mu_1$} [l] at 147 51
  \pinlabel {$\mu_n$} [l] at 147 25
  \pinlabel {$\chi$} [r] at 0 111
  \endlabellist
  \includegraphics[width=.25\textwidth]{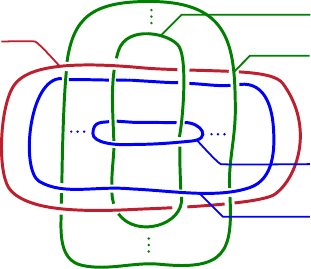}
  \caption{The components of the stacked Borromean rings, $ \mf{b}^n $.\label{fig:octahedra_link}}
\end{figure}

As decribed in the introduction, for our applications to $2$-bridge links we will be interested in manifolds where the two rank $2$ cusps in the lantern manifold are replaced
with $2n$ cusps (for some $ n \in \N $) that interleave as in \zcref{fig:lanternfold_stack}. We call this the \df{$ n$-stacked lantern manifold},
denoted $ M^n $. Gluing a $2$-handle along the dotted curve in the figure produces the complement $B^n$ of the $ n$-stacked Borromean rings $ \mf{b}^n $
For completeness, we give a formal definition of $M^n $ and $ \mf{b}^n $:
\begin{defn}\label{defn:explicit}
  The \df{$n$-stacked Borromean rings} is the link $ \mf{b}^n \subset \Sph^3 $ with a diagram in $ \R^2 $ produced by the following algorithm; compare with \zcref{fig:octahedra_link}.
  \begin{enumerate}
    \item Draw rectangles $ \mu_1, \ldots, \mu_n $, where $ \mu_i $ is determined by the diagonally-opposite corners $ ( -i, -i + 1/2 ) $ and $ (i, i - 1/2) $.
    \item Draw rectangles $ \alpha_1, \ldots, \alpha_n $, where $ \alpha_i $ is determined by the diagonally-opposite corners $ (-i + 1/2, -n - i/2) $ and $ (i - 1/2, n + i/2) $.
    \item At all crossing points $ (x,y) $ between any $ \mu_i $ and $ \alpha_j $, if $ y < 0 $ then define $ \mu_i $ to cross over $ \alpha_j $ and if $ y > 0 $ then
          define $ \alpha_j $ to cross over $ \mu_i $.
    \item Draw a rectangle $ \chi $ determined by the diagonally-opposite corners $ (-n- 1/2, -n) $ and $ (n+ 1/2, n) $.
    \item At all crossing points $ (x,y) $ between $ \chi $ and some $ \alpha_j $, if $ x < 0 $ then $ \chi $ crosses over $ \alpha_j $ and if $ x > 0 $ then $ \alpha_j $ crosses over $ \chi $.
  \end{enumerate}
  The \df{central component} is the component represented by the loop $ \chi $. The \df{$n$-stacked lantern manifold} is the topological manifold $M^n$ obtained by adding the arc which we will now describe to
  the diagram of $ \mf{b}^n $, lifting the result into $ \Sph^3 $ as an embedded graph, and taking the complement of this graph.
  \begin{enumerate}[resume]
    \item Draw a vertical segment $ \tau $ between $ (0, -n) $ and $ (0, n) $. This segment meets $ \chi $ at its two endpoints, which are trivalent vertices of the graph. All other crossing
          points $ (x,y) $ between $ \tau $ and a component of the diagram are incidences between $ \tau $ and some $ \mu_j $; if $ y > 0 $ then $ \tau $ crosses over the strand $ \mu_j $,
          and if $ y < 0 $ then $ \tau $ crosses under $ \mu_j $.
  \end{enumerate}
  See \zcref{fig:intro_cone_arc} in the introduction for an example of such a diagram for $ n = 3 $.
\end{defn}

\subsection{Hyperbolic structure}
Just as for the lantern group, we could set up and solve a system of polynomial equations in the entries of generators for the image of $ \pi_1(M^n) $ in $ \PSL(2,\C)$.
However, the number of variables grows with $ n $ and there is not much geometric enlightenment to be gained from the algebra. Instead, we construct the hyperbolic
structures by triangulating $B^n$ and then carrying out a cone deformation similar to that of \zcref{prp:cone_deform_simple}
to construct a fundamental domain for $M^n$.

\begin{figure}
  \centerline{\begin{subfigure}[t]{.65\textwidth}
    \labellist
    \small\hair 2pt
    \pinlabel {$\mu_{k+1}$} [b] at 468 170
    \pinlabel {$\mu_{k}$} [b] at 423 170
    \pinlabel {$\mu_{k-1}$} [b] at 384 170
    \pinlabel {$\chi$} [b] at 337 207
    \pinlabel {$\alpha_k$} [b] at 283 207
    \pinlabel {$\alpha_{k+1}$} [b] at 188 207
    \endlabellist
    \includegraphics[width=\textwidth]{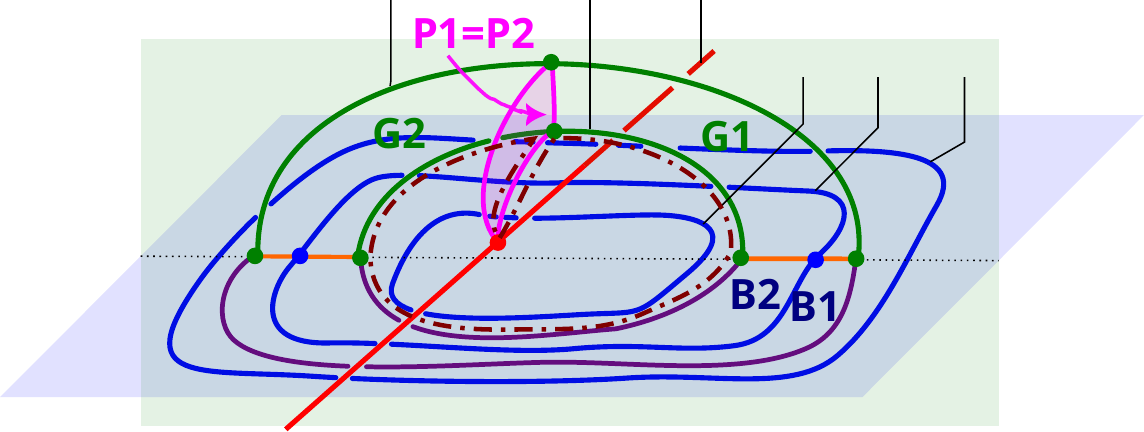}\vspace{.5em}
    \caption{A single octahedron appears in each of the four quadrants cut out by the two planes. The red line corresponding to the cusp $ \chi $ is parallel to the blue plane
            and is transverse to the green plane. As explained in the caption to (\textsc{b}), the pink triangle is a self-gluing of two faces of the octahedron. In addition,
            we have outlined in dashed maroon one of the faces of the octahedron left unshaded in (\textsc{b}).\label{fig:octahedra_s04s}}
  \end{subfigure}\hspace{.2cm}
  \begin{subfigure}[t]{.65\textwidth}\centering
    \labellist
    \small\hair 2pt
    \pinlabel {$\alpha_k$} at 0 58
    \pinlabel {$\alpha_{k+1}$} at 175 58
    \pinlabel {$\mu_{k+1}$} [t] at 41 0
    \pinlabel {$\mu_{k}$} [t] at 85 0
    \pinlabel {$\mu_{k-1}$} [t] at 128 0
    \pinlabel {$\chi$, at $\infty$} [b] at 118 122
    \endlabellist
    \includegraphics[width=.9\textwidth]{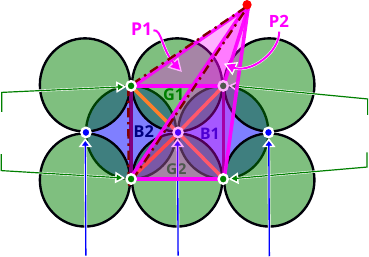}\vspace{.5em}
    \caption{Lift of an octahedron to $\H^3$, where the cusp $ \chi $ has been placed at $\infty$. The edge and face colourings are the same as (\textsc{a}). Above each disc is a hyperbolic plane
             which descends to one of the two planes of (\textsc{a}), and the shading on the disc shows which one. The two triangles above the pink edges which contain $\infty$ as a vertex are glued together in the quotient to form an embedded triangle, shaded pink in the other images.\label{fig:octahedra_single}}
  \end{subfigure}}
  \centerline{\begin{subfigure}[t]{.65\textwidth}
    \labellist
    \small\hair 2pt
    \pinlabel {$\alpha_2$} [b] at 187 206
    \pinlabel {$\alpha_1$} [b] at 282 206
    \pinlabel {$\chi$} [b] at 337 206
    \pinlabel {$\mu_1$} [b] at 421 172
    \pinlabel {$\mu_2$} [b] at 462 172
    \endlabellist
    \includegraphics[width=\textwidth]{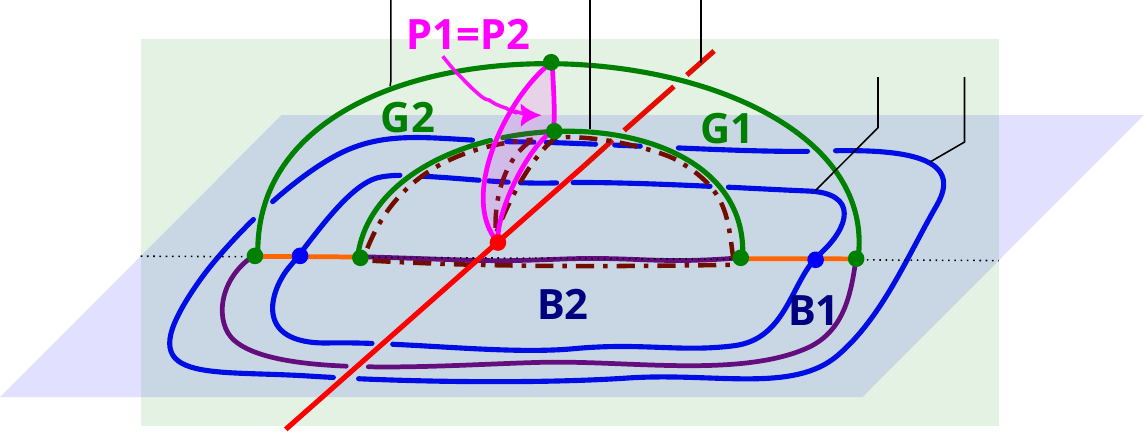}
    \caption{One of the four octahedra that lies inside $ \alpha_1 $: there is one in each of the four quadrants cut out by the two planes. Unlike in the other pictures, there is no `next layer' of octahedra
             deeper inside the link, and so the octahedron at this radius in front of the blue plane is directly glued onto the octahedron behind the blue plane across the face outlined in dashed maroon,
            one of the two faces of the octahedron that was left unshaded in (\textsc{b}) bounded by one purple and two pink edges.\label{fig:octahedra_s03s}}
  \end{subfigure}\hspace{.2cm}
  \begin{subfigure}[t]{.65\textwidth}
    \labellist
    \small\hair 2pt
    \pinlabel {$\alpha_n$} [b] at 250 186
    \pinlabel {$\chi$} [b] at 314 186
    \pinlabel {$\mu_{n-1}$} [b] at 378 172
    \pinlabel {$\mu_n$} [b] at 420 172
    \endlabellist
    \includegraphics[width=\textwidth]{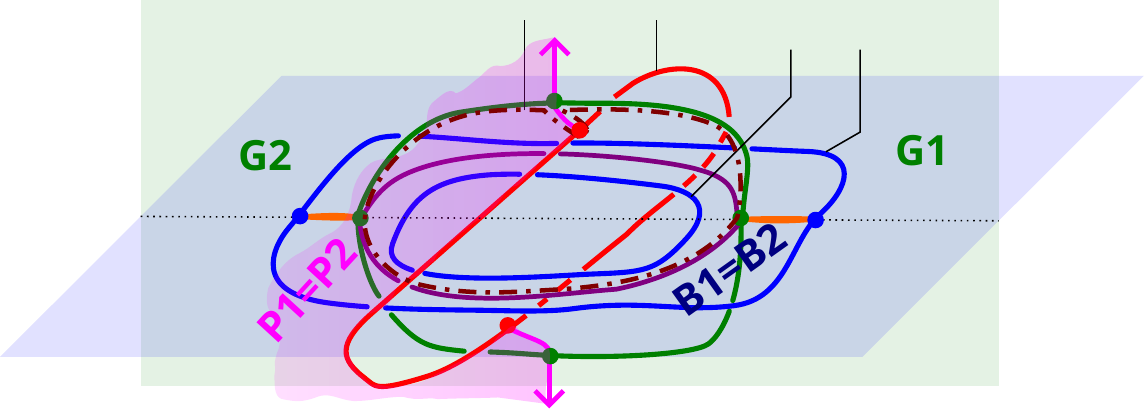}
    \caption{The two octahedra that lie outside $ \mu_n $: there is one on each side of the green plane, and each has a self-glued
    pair of faces bounded by three pink edges. We have labelled only the faces for the octahedron on the front side of the green wall. In addition, we have outlined in dashed maroon
    one of the faces of the octahedron left unshaded in (\textsc{b}).\label{fig:octahedra_s03s_outer}}
  \end{subfigure}}\vspace{1em}
  \begin{subfigure}{\textwidth}
    \labellist
    \small\hair 2pt
    \pinlabel {$\mu_n$} [t] at 18 0
    \pinlabel {$\mu_{n-1}$} [t] at 54 0
    \pinlabel {$\mu_{1}$} [t] at 128 0
    \pinlabel {$\mu_{1}$} [t] at 164 0
    \pinlabel {$\mu_{n-1}$} [t] at 238 0
    \pinlabel {$\mu_{n}$} [t] at 275 0
    \pinlabel {$\mu_{n-1}$} [t] at 310 0
    \pinlabel {$\mu_{1}$} [t] at 384 0
    \pinlabel {$\mu_{1}$} [t] at 422 0
    \pinlabel {$\mu_{n-1}$} [t] at 494 0
    \pinlabel {$\mu_{n}$} [t] at 530 0
    \pinlabel {$\alpha_{n}$} [t] at 36 13
    \pinlabel {$\alpha_{1}$} [t] at 146 13
    \pinlabel {$\alpha_{n}$} [t] at 256 13
    \pinlabel {$\alpha_{n}$} [t] at 292 13
    \pinlabel {$\alpha_{1}$} [t] at 402 13
    \pinlabel {$\alpha_{n}$} [t] at 512 13
    \pinlabel {$\alpha_{n}$} [b] at 36 87
    \pinlabel {$\alpha_{1}$} [b] at 146 87
    \pinlabel {$\alpha_{n}$} [b] at 256 87
    \pinlabel {$\alpha_{n}$} [b] at 292 87
    \pinlabel {$\alpha_{1}$} [b] at 402 87
    \pinlabel {$\alpha_{n}$} [b] at 512 87
    \endlabellist
    \includegraphics[width=\textwidth]{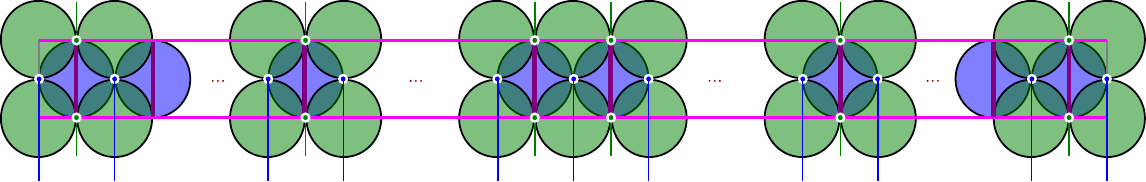}\vspace{.5em}
    \caption{Octahedra visible in a Ford domain; $ \chi $ is at $ \infty $. The vertical translation is the side-pairing that produces all the embedded pink triangles.\label{fig:octahedra}}
  \end{subfigure}
  \caption{Octahedral decomposition of $ B^n$.}
\end{figure}

\begin{thm}\label{thm:octahedra}
  Let $ \mf{b}^n $ be the $n$-stacked Borromean rings, and let $ B^n = \Sph^3 \setminus \mf{b}^n $. Then $B^n$ admits a complete hyperbolic structure that
  decomposes as a union of $4n-2 $ ideal right-angled octahedra.
\end{thm}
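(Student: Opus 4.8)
The plan is to read an ideal polyhedral decomposition of $B^n$ directly off the diagram in \zcref{fig:octahedra_link}, realise every cell as the regular ideal octahedron, and conclude with a gluing theorem. The work is almost entirely combinatorial.

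\emph{Step 1: the cell decomposition.} In $S^3 \setminus \mf{b}^n$ I would cut along the two $2$-spheres $\Sigma_{\mathrm b}, \Sigma_{\mathrm g}$ shown in blue and green in \zcref{fig:octahedra}, meeting $\mf{b}^n$ transversally and meeting each other in a circle. These divide $S^3$ into four ``quadrant'' balls (\zcref{fig:octahedra_s04s}) and divide $B^n$ into four tangle complements. Inside each quadrant ball the part of $\mf{b}^n$ is a nested stack of clasps, one per radial layer $k$; remembering the annular pieces of $\Sigma_{\mathrm b}, \Sigma_{\mathrm g}$ on the boundary, the tangle complement deformation retracts onto a linear chain of combinatorial ideal octahedra, one for each $k$ (\zcref{fig:octahedra_single}), the innermost layer inside $\alpha_1$ and the outermost layer outside $\mu_n$ being the two atypical ends (\zcref{fig:octahedra_s03s}, \zcref{fig:octahedra_s03s_outer}). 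Counting — four quadrants for each layer $1 \le k \le n-1$ and only two for the outermost layer $k=n$ — gives exactly $4(n-1) + 2 = 4n - 2$ octahedra, whose face identifications (across $\Sigma_{\mathrm b}$ and $\Sigma_{\mathrm g}$, between consecutive layers, and the self-gluing of the pair of ``pink'' faces meeting $\infty$ in \zcref{fig:octahedra_single}) manifestly reassemble $B^n$. I would check that each piece is a ball with octahedral combinatorics and that no identification has been missed; this is a finite diagrammatic verification, carried out layer by layer.

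\emph{Step 2: geometrisation.} Realise each combinatorial octahedron as the regular ideal octahedron $O$, whose dihedral angles are all $\pi/2$ and whose faces are congruent regular ideal triangles. Every combinatorial face identification is then realised by an isometry of $\H^3$, compatibly with orientations (there is no obstruction, since all faces of $O$ are isometric), so $B^n$ inherits a hyperbolic structure away from the edges and ideal vertices. Extending it across the edges requires the dihedral angles around each edge to sum to $2\pi$, i.e.\ \emph{every edge of the decomposition is $4$-valent} (an octahedron counted with multiplicity if it meets the edge twice through the pink self-gluing). Granting this, completeness is automatic: each cusp cross-section is a closed surface tiled by the vertex links of the octahedra, which are unit Euclidean squares, with exactly four squares around each vertex by the edge condition; it is thus a genuine closed Euclidean surface, orientable (since $B^n \subset S^3$) and with vanishing Euler characteristic (forced by $4$-valence), hence a flat torus, hence complete. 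The gluing theorem for collections of hyperbolic ideal polyhedra — the version of the Poincar\'e polyhedron theorem of \zcref{sec:poincare} specialised to cone angle $2\pi$, or Thurston's gluing theorem \cite[\S3.4]{thurstonN} — then produces the complete hyperbolic structure, which by construction decomposes into $4n-2$ ideal right-angled octahedra.

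\emph{Main obstacle.} Everything reduces to the $4$-valence count. I would enumerate the edges of the $4n-2$ octahedra together with their identifications, organising the bookkeeping by the edge colours of \zcref{fig:octahedra_s04s} and \zcref{fig:octahedra_single}: the ``pink'' edges (those through $\infty$, bounding the embedded triangles created by the vertical $\chi$-cusp translation), the ``blue''/``green'' edges lying on $\Sigma_{\mathrm b}$ resp.\ $\Sigma_{\mathrm g}$, and the ``purple''/``maroon'' edges interior to a quadrant. The reflections permuting the four quadrants and the layer-to-layer gluings cut the check down to one generic layer plus the two atypical ends, after which one verifies that every edge class contains exactly four octahedron-edges. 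As a consistency check, at $n=1$ this reproduces the classical decomposition of the Borromean rings complement into two regular ideal octahedra \cite[\S3.4]{thurstonN}.
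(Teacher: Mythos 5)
Your proposal follows essentially the same route as the paper's proof: the same quadrant-and-layer decomposition into $4n-2$ ideal octahedra (with the same two atypical end pieces, four octahedra inside the innermost cusp and two outside $\mu_n$), metric realisation by regular right-angled ideal octahedra, verification that every edge class is surrounded by four octahedra, and an appeal to the Poincar\'e polyhedron theorem. The only cosmetic difference is that you certify completeness via the flat-torus cusp cross-sections where the paper checks that the tangency side-pairings are parabolic --- equivalent criteria --- and both arguments leave the $4$-valence bookkeeping at the same asserted-but-checkable level of detail.
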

\begin{proof}
  The $2$-sphere $S$ in $ \Sph^3 $ containing all the cusps $\mu_j$ (the blue plane of \zcref{fig:octahedra_link}) can be divided
  into a union of $n-1$ four-punctured spheres and $2$ thrice-punctured spheres; a pair of adjacent four-punctured spheres in this
  decomposition is shown in \zcref{fig:octahedra_s04s}. In the same figure, we show the construction of a single octahedron, with
  one vertex at $ \infty $, one each at the cusps $ \mu_{k-1} $, $ \mu_{k} $, and $ \mu_{k+1} $, and four remaining vertices
  coming from meeting each of the two cusps $ \alpha_k $ and $ \alpha_{k+1} $ twice. This octahedron lifts to $ \H^3 $ as shown
  in \zcref{fig:octahedra_single}. These octahedra cover the entire link complement, except for a piece inside the cusp $ \mu_1 $
  and a piece outside the cusp $ \mu_n $. The region inside $ \mu_1 $ is formed from a gluing of four octahedra
  as in \zcref{fig:octahedra_s03s}. The region outside $ \mu_n $ is formed from a single pair of octahedra which glue up around the $\mu_n$ cusp
  as in \zcref{fig:octahedra_s03s_outer}.

  In total, we obtain a set of $4n-2$ octahedra which lift as shown in \zcref{fig:octahedra}. The rectangle is a fundamental domain for the
  central cusp of $ \mf{b}^n $, i.e.\ the cusp $ \chi $. The other circles shown are all of equal radius,
  and the intersection points of these circles are the lifts of cusps as labelled.

  This combinatorial polyhedron can be metrically realised in $ \H^3 $ with all ideal vertices and all angles $ \pi/2 $ or (at adjacency between two octahedra) $ \pi $. It can be checked that
  in the decomposition of $ B^n $ every edge is surrounded by $4$ octahedra; thus all angle sums around edges are $ 2\pi $. In addition the transformations pairing tangent circles can be chosen
  to be parabolic. These observations verify the hypotheses of the Poincar\'e polyhedron theorem and so this ideal triangulation defines a complete hyperbolic structure.
\end{proof}

As a straightforward consequence of \zcref{thm:octahedra} we see that the links $ \mf{b}^n $ are arithmetic~\cite[Lemma~3.2]{pinsky23} with
invariant trace field $ \Q(i) $, and the manifolds $ B^n $ have hyperbolic volume $ (32n-16) \cyrL(\pi/4) $ where $ \cyrL $ is the Lobachevsky function~\cite[\S 7.2]{thurstonN}.
More important to our applications is the following additional corollary.

\begin{cor}
  The cusp shapes of $ B^n $ (i.e. the Euclidean length of the longitude when the meridian is normalised to length $1$) are: $ 4n - 2 $ for $ \chi $;
  $ 2 $ for $ \alpha_1 $ and $ \mu_n $; and $ 1 $ for all other cusps.
\end{cor}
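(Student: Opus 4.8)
The plan is to read off each cusp torus directly from the octahedral decomposition of \zcref{thm:octahedra}. In a regular ideal right-angled octahedron every ideal vertex is met by exactly four faces and four edges, pairwise orthogonal, so the link of that vertex is a Euclidean rectangle all of whose angles are $\pi/2$; the symmetry of the regular octahedron makes it a square, and after one global rescaling we may take every such square to have side $1$. Hence the horospherical cross-section of each cusp of $B^n$ is a Euclidean torus tiled by unit squares, one for each incidence of an octahedron corner with that cusp, and the meridian and longitude are edge-paths of this tiling. The identifications assembling the octahedra are the side-pairings visible in \zcref{fig:octahedra} (translations, and face-to-face gluings with no rotational part), so each cusp torus is genuinely rectangular and its cusp shape is simply the number of unit squares met by the longitude divided by the number met by the meridian. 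The corollary therefore reduces to (i) counting, cusp by cusp, how many octahedron corners land on it, and (ii) reading off from \zcref{fig:octahedra_s04s,fig:octahedra_single,fig:octahedra_s03s,fig:octahedra_s03s_outer,fig:octahedra} how those squares fit together into a torus.

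For the central cusp $\chi$ this is immediate. Every one of the $4n-2$ octahedra contributes exactly one corner to $\chi$ — the vertex that is placed at $\infty$ in \zcref{fig:octahedra_single,fig:octahedra} — and \zcref{fig:octahedra} displays these $4n-2$ unit squares as a single strip, one square wide, which then closes up. The meridian of $\chi$ crosses the strip (length $1$) and the longitude runs its full length (length $4n-2$), so the cusp shape of $\chi$ is $4n-2$.

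For the cusps $\mu_j$ and $\alpha_j$ I would work layer by layer using the local models. In the interior of the stack (the generic octahedron of \zcref{fig:octahedra_single} meets $\mu_{k-1},\mu_k,\mu_{k+1}$ once each and $\alpha_k,\alpha_{k+1}$ twice each, with four copies per layer) the combinatorial picture is periodic: the interior of $B^n$ is a finite block of a bi-infinite, translation-symmetric stack, so all interior $\mu_j$ and $\alpha_j$ cusps are mutually isometric, and counting the squares around any one of them — a small, symmetric block whose fundamental domain develops as a square under pure translations — gives cusp shape $1$. The two exceptional cusps are precisely those adjacent to the ends of the stack: $\alpha_1$, which sits inside the four octahedra of \zcref{fig:octahedra_s03s} (the innermost layer has no deeper neighbour and is glued to itself across the maroon face), and $\mu_n$, which is wrapped by the self-glued pair of octahedra of \zcref{fig:octahedra_s03s_outer}. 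In each of these two cases the self-gluing means the developed cusp cross-section wraps around twice before it closes, doubling the longitude relative to the generic case, so these cusps have shape $2$; the remaining $\mu_j$ ($j<n$) and $\alpha_j$ ($j>1$) are interior and have shape $1$.

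The main obstacle is exactly the bookkeeping at those two ends. Away from them the layered structure is uniform and the cross-sections are visibly products of two unit intervals, so there is nothing to check; but at the innermost region inside $\mu_1$ (\zcref{fig:octahedra_s03s}) and the outermost region outside $\mu_n$ (\zcref{fig:octahedra_s03s_outer}) some octahedra have faces glued to other faces of the same octahedron and the periodic pattern breaks, so one cannot simply quote the interior count. The careful step is to develop the cusp neighbourhoods of $\alpha_1$ and of $\mu_n$ explicitly from those two figures, locate the meridian and longitude in each, and verify directly that the fundamental domain of the torus is twice as long in one direction as in the generic case. I would also re-check the meridian/longitude identification — using that the meridian of each component bounds a disc meeting that component once in $\Sph^3$ — to be sure the reported shapes $4n-2$ and $2$ are not their reciprocals. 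Once these two boundary cases are pinned down, the interior uniformity handles the rest and the stated list of cusp shapes drops out.
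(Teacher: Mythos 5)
Your proposal is correct and follows essentially the same route as the paper: both compute cusp shapes by the Futer--Purcell wall-counting method, using the fact that each ideal vertex of a regular ideal octahedron contributes a unit square to the cusp cross-section, with the central cusp $\chi$ giving a $1\times(4n-2)$ strip and the exceptional behaviour of $\alpha_1$ and $\mu_n$ traced to the self-gluings at the two ends of the stack. The only cosmetic difference is that the paper attributes the factor of $2$ at $\alpha_1$ to a meridian crossing two walls instead of four (shape $4/2$) rather than to a doubled longitude, but the ratio is the same either way.
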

\begin{proof}
  For an ideal regular octahedral decomposition, the cusp shape can be computed as the ratio between the number of polyhedral walls crossed while
  walking around the longitude of each cusp to the number of walls crossed while walking around the meridian. This is carefully explained in Futer and Purcell~\cite[\S 2.2]{futer07}.
  As an example, consider the cusps $ \alpha_k $. When $ k > 1 $, the incident octahedra are as shown in \zcref{fig:octahedra_s04s}. Walking around the intersection point of $ \alpha_k $
  in the horizontal plane $ S $, we meet an orange edge (joining $\alpha_k$ to $\alpha_{k+1} $), a purple edge (joining $\alpha_k$ to $ \alpha_k$), a second orange
  edge (joining $\alpha_k$ to $\alpha_{k-1} $), and a second purple edge (joining $\alpha_k$ to $\alpha_k$), so in walking around the meridian
  of the cusp four octahedra walls are passed. Walking around the longitude one passes through only two octahedra, but there are two additional
  walls (pink) that correspond to a self-gluing of an octahedron, so the total is still $4$ and the cusp shape is $4/4 = 1 $. When $ i = 1 $, the
  incident octahedra are as shown in \zcref{fig:octahedra_s03s}; there are still $4$ octahedra along the longitude, but when walking along the meridian one only passes through
  two walls, giving a cusp shape of $ 4/2 = 2 $.
\end{proof}

\begin{cons}\label{cons:cut_glue}
  The fundamental domain shown in \zcref{fig:octahedra} has a single vertex that descends to $ \chi $, which is placed at $ \infty $, and
  has $4$ vertices which descend to $ \alpha_1 $. We will rearrange the component octahedra so that there is a single vertex which
  descends to $ \alpha_1 $, and we will move it by isometry in $ \H^3 $ so that this single vertex is at $ \infty $.

  Consider the rectangle of \zcref{fig:octahedra} which gives a fundamental domain for the stabiliser of $ \infty $; it is superimposed
  on top of a pair of dual circle packings of the plane. We may translate the rectangle around on these circle packings arbitrarily,
  and the resulting figure will still define a fundamental domain. Translate it so that one of the points descending to $ \alpha_1 $ is
  the centre of reflective symmetry. The resulting domain has $3$ vertices which descend to $ \alpha_1 $,
  which are the points of tangency of the six shaded circles in \zcref{fig:cut_glue_1}. Without loss of generality, the central point
  descending to $ \alpha_1 $ is $ 0 \in \C $ and the grey dotted circle shown in \zcref{fig:cut_glue_1} (defined by being centred at $0$
  and tangent to the four circles meeting there) is the unit circle. We also distinguish two vertical lines (black dotted in the figure),
  namely the vertical lines through the lifts of $ \mu_n $.

  Apply the hyperbolic isometry $ z \mapsto 1/z $; this is a circle inversion in the unit circle followed by reflection in the horizontal line of
  symmetry, and the result is shown in \zcref{fig:cut_glue_2}. This has the effect of moving the vertex corresponding to $ \chi $ to $0$, and moving
  a vertex corresponding to $ \alpha_1 $ to $ \infty $.

  Finally, cut along the hyperbolic planes above the black dotted circles and glue these pieces along the inverts of the green circles, to form a rectangle of aspect ratio $2:1$
  as in \zcref{fig:cut_glue_3}; this is a rearrangement of the octahedra making up the fundamental domain so that the cusp $\alpha_1$ is represented
  by a single vertex. In the resulting domain the cusp $ \chi $ is represented by two vertices.
\end{cons}

\begin{figure}
  \centering
  \begin{subfigure}{\textwidth}\vspace{0pt}
    \centering
    \includegraphics[width=\textwidth]{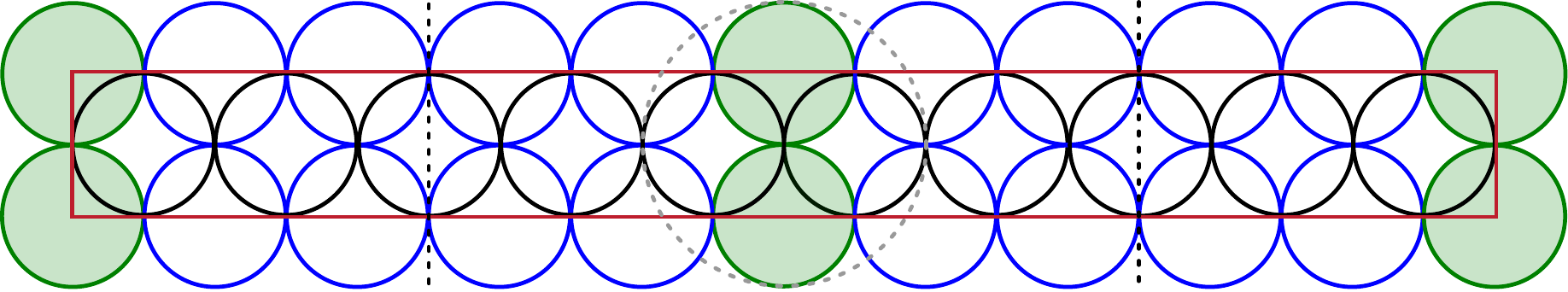}
    \caption{\label{fig:cut_glue_1}}
  \end{subfigure}\\[.5em]
  \begin{subfigure}{.6\textwidth}
    \centering
    \includegraphics[height=.25\textheight]{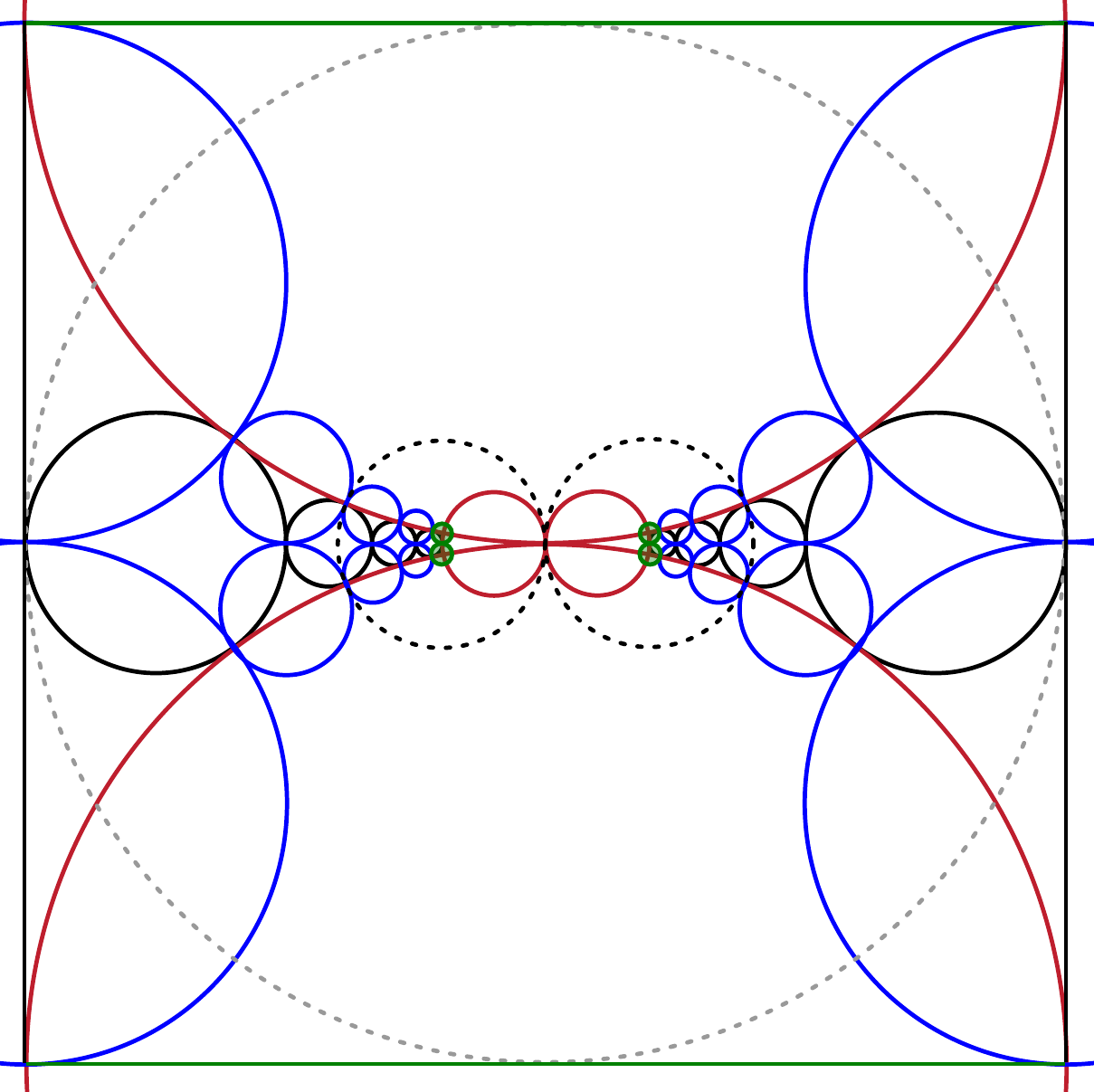}
    \caption{\label{fig:cut_glue_2}}
  \end{subfigure}\hfill
  \begin{subfigure}{.4\textwidth}
    \centering
    \includegraphics[height=.25\textheight]{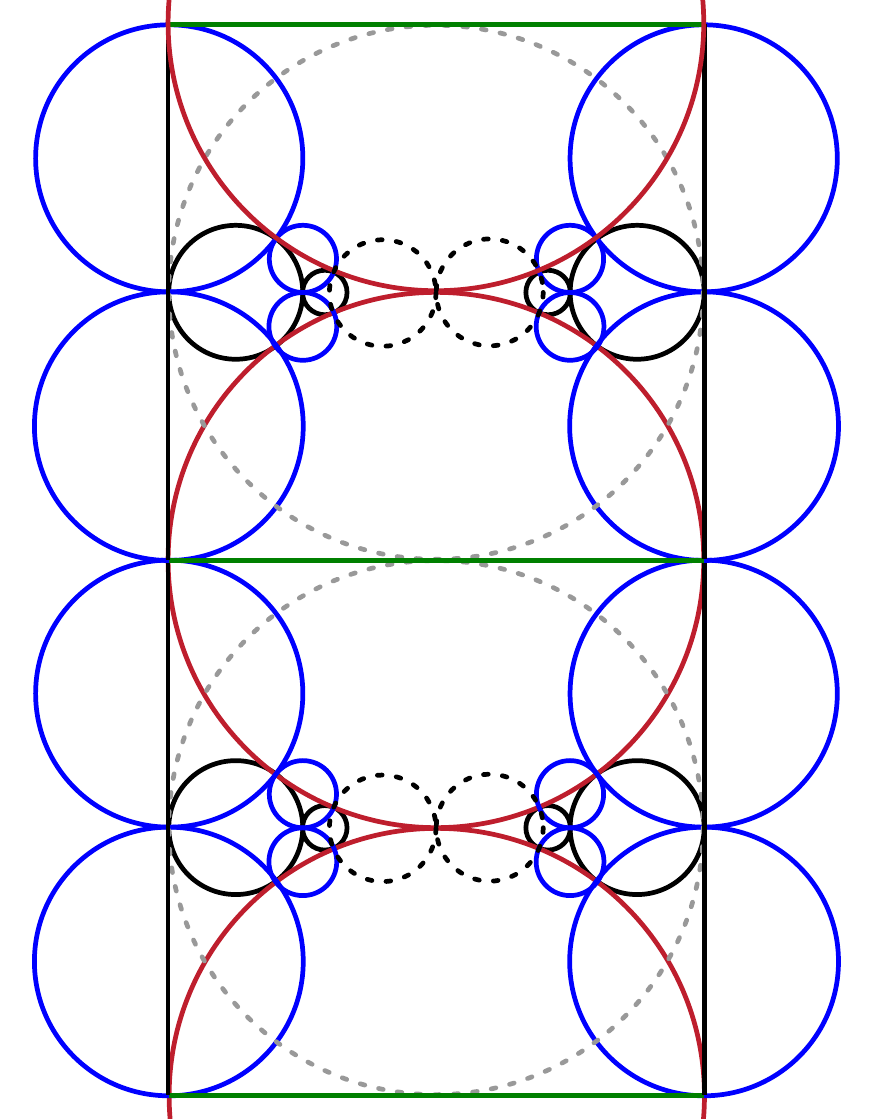}
    \caption{\label{fig:cut_glue_3}}
  \end{subfigure}
  \caption{Cutting and gluing in \zcref{cons:cut_glue}.}
\end{figure}

The domain which results from \zcref{cons:cut_glue} can be compared with  the horoball view of the $ 3$-stacked Borromean
rings displayed in \zcref{fig:borromean_7cpt_cusps}.

\begin{thm}\label{thm:cone_deform_complex}
  There exists a path in the character variety of $ M^n $ parameterising hyperbolic cone manifold holonomy groups in which $ ZY^{-1} $ changes
  from a parabolic, through elliptics of increasing holonomy angle from $ 0 $ to $ 2\pi $, to the identity; at each point the cone
  manifold is obtained by filling the $ ZY^{-1} $ cusp with an ideal cone arc of the appropriate angle, and the path limits onto $ B^n $. This
  deformation preserves all cusp shapes except for that of the cusp $ \alpha_1 $ which, in the limit, has normalised longitude $ 2n-1 $.
\end{thm}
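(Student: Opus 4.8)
The plan is to run the argument of \zcref{prp:cone_deform_simple} with the right-angled octahedral decomposition of \zcref{thm:octahedra} in place of the two-octahedron decomposition of the classical Borromean rings: start from the fundamental polyhedron $P$ for $B^n$ produced by \zcref{cons:cut_glue}, normalised so that $\alpha_1$ sits at $\infty$ with $\Stab(\infty)$-rectangle of aspect ratio $2:1$ and meridian $z\mapsto z+1$, and with the cusp $\chi$ represented by the two finite vertices that are the lifts of the cone arc $\ast$ clustered near the real axis as in \zcref{fig:borromean_7cpt_cusps}. In the $B^n$ holonomy group $ZY^{-1}=\Id$, so the isometric circles of the side-pairings $Y$ and $Z$ coincide; exactly as for $n=1$, pulling them apart opens an angle between them proportional to the angle between the two isometric circles of $ZY^{-1}$, and that angle is the deformation parameter.

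I would then write down the family $P_\theta$, $\theta\in(0,2\pi)$, by performing this separation locally. The circles paired by $Y$ and those paired by $Z$ are replaced by nearby circles meeting at the prescribed angle, and an elementary trigonometric computation --- the $n$-dependent analogue of the identity $r_\theta=\tfrac14\sec^2\tfrac\theta2$ of \zcref{prp:cone_deform_simple} --- fixes their radii and centres and forces the height of the $\Stab(\infty)$-rectangle (its width being pinned by the meridian normalisation). The structural claim encoded by the name `expansion joint' is that outside a bounded cluster of faces near the real axis the polyhedron $P$ is carried isometrically onto a union of honest ideal right-angled octahedra, so only the cluster meeting the cone arc is genuinely deformed; side-pairings for $P_\theta$ are then the unique M\"obius transformations realising the (combinatorially constant) face-pairing pattern of $B^n$, which is also that of the limiting decomposition of $M^n$.

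Applying the cone-manifold Poincar\'e polyhedron theorem of \zcref{sec:poincare} to $P_\theta$ then finishes the construction: parabolicity of the cusp side-pairings is built in, every edge cycle disjoint from the cone arc is a cycle of the unmodified right-angled octahedral pattern and hence still sums to $2\pi$, and the unique cycle running along the cone arc --- built from the deformed edges near the cone arc --- sums to $\theta$. (This is exactly where the rigidity of the octahedra away from the joint is used.) So $P_\theta$ uniformises a hyperbolic cone manifold supported on $B^n$ with a single ideal singular arc of cone angle $\theta$ along $\ast$, the word $ZY^{-1}$ is elliptic of rotation angle $\theta$, and $\theta\mapsto\Hol(P_\theta)$ is a smooth path in the $\PSL(2,\C)$-character variety of $M^n$, smooth because the trigonometric data of $P_\theta$ is.

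For the two ends and the cusp shapes: as $\theta\to2\pi$ the isometric circles of $Y$ and $Z$ recombine, $ZY^{-1}\to\Id$, and $P_\theta$ degenerates to the polyhedron of \zcref{thm:octahedra}, recovering $B^n$; as $\theta\to0$ the two isometric circles of $ZY^{-1}$ become tangent, $ZY^{-1}$ limits to a parabolic, the cone arc opens to a rank $1$ cusp, and $P_0$ --- the $n$-fold analogue of \zcref{fig:fd_lantern} --- uniformises the complete structure on $M^n$ with $Y$, $Z$, $ZY^{-1}$ parabolic, which is the rigid maximal-cusp structure discussed in \zcref{sec:borromean}. Every cusp other than $\alpha_1$ stays surrounded throughout by the same arrangement of rigid ideal right-angled octahedra, so its shape is constant, while $\alpha_1=\infty$ has normalised longitude equal to the height of the $\Stab(\infty)$-rectangle, whose $\theta\to0$ value --- read off from the trigonometric formula, or equivalently by counting the polyhedral walls met along the $\alpha_1$-longitude in the limiting decomposition of $M^n$ as in the proof of the cusp-shape corollary --- is $2n-1$. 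The main obstacle is the explicit local construction of the second paragraph: identifying precisely which faces near the real axis deform and checking that the deformed cluster glues correctly onto the rigid remainder, so that the non-joint edge cycles really are unaffected; once $P_\theta$ is pinned down the rest --- the edge-cycle bookkeeping, parabolicity, and the wall-count giving $2n-1$ --- is routine, as in \zcref{prp:cone_deform_simple}.
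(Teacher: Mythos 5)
Your overall route is the paper's: start from the fundamental polyhedron of \zcref{cons:cut_glue} with $\alpha_1$ at $\infty$ and $\chi$ represented by two finite vertices, deform only near those two vertices while leaving the rest of the octahedral complex rigid, apply the cone-manifold Poincar\'e theorem of \zcref{sec:poincare} with one edge cycle of sum $\theta$ and all others still $2\pi$, and read off the two limits and the cusp shapes from the rigidity away from the joint. All of that matches.

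The one genuine gap is exactly the step you flag as ``the main obstacle'', and your proposed way of closing it --- an $n$-dependent analogue of $r_\theta=\tfrac14\sec^2\tfrac\theta2$ that resizes the $Y$- and $Z$-isometric circles and varies the height of the $\Stab(\infty)$-rectangle --- is not the right local model and would not glue. In \zcref{prp:cone_deform_simple} \emph{every} circle in the domain has radius $r_\theta$, so changing $r_\theta$ rescales the whole picture coherently; for $n>1$ the circles adjacent to the joint are tangent and orthogonal to circles belonging to octahedra that must stay fixed (this is what preserves the other edge cycles and cusp shapes), so changing their radii destroys precisely those incidences. The paper's resolution is simpler and involves no dilation at all: cut the domain along the vertical line of symmetry through the two $\chi$-vertices and rigidly translate the two halves apart. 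Every isometric circle keeps its radius and is merely shifted, each $\chi$-vertex opens into a new ideal quadrilateral face joined to its partner by the elliptic (cone) arc, and the only lattice element that changes is the horizontal translation of $\Stab(\infty)=\Stab(\alpha_1)$, whose length grows with the separation. With that local model, the claims you correctly identify as needing proof --- that the non-joint edge cycles are unaffected, that all cusp shapes other than $\alpha_1$'s are preserved, and the value $2n-1$ of the limiting normalised longitude of $\alpha_1$ --- become immediate, rather than requiring a delicate check that a deformed cluster still fits against a rigid remainder.
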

\begin{proof}
  Consider a fundamental domain for $ B^n $ as constructed in \zcref{cons:cut_glue}; the vertical line of symmetry of this figure cuts the two
  vertices that descend to the cusp we wish to remove. Just as in \zcref{prp:cone_deform_simple}, we may cut along this line and pull the two halves
  apart, introducing a new quadrilateral ideal face where each of these vertices was. This corresponds to increasing the translation
  length of the horizontal Euclidean transformation. The geometry around no other edge is affected, so all other edge-cycles remain
  unchanged and the group defined by the side-pairings of the new polyhedron is the holonomy group of a hyperbolic polyhedron with the same cusp
  shapes and combinatorics, the only difference being a new elliptic arc joining the two new thrice-marked spheres at infinity. In particular all
  cusp shapes remain the same since relative sizes of isometric circles of parabolics are preserved (the pairs of circles are simply shifted by
  an appropriate Euclidean translation).
\end{proof}

\begin{rem}\label{rem:hk_deformation}
  As pointed out by a referee, an alternative proof of the existence of the cone deformation in \zcref{thm:cone_deform_complex} can be obtained
  via the following argument. There is an order two rotation $ \Phi \in \Isom^+(M^n) $ with axis along the drilled out tunnel $\tau$. Consider the quotient
  $ M^n/\langle \Phi \rangle $; this is an orbifold with orientation-preserving isotropy groups which has two conformal boundary components, both
  being spheres with two punctures and one angle $ \pi $ cone point. Produce a new orbifold $ \overline{M^n} $ by doubling $M^n/\langle \Phi \rangle$
  across these boundary components. This orbifold admits a finite volume hyperbolic metric by gluing two copies of the hyperbolic convex core of $M^n/\langle \Phi \rangle$
  together. The desired cone deformation of $ M^n $ which modifies the angle around $\tau$ from $0$ to $2\pi$ exists if and only if the image of $ \tau $ in $ \overline{M^n} $
  can be cone-deformed from $0$ to $\pi$. This can be done by Wei\ss' global rigidity theorem~\cite{weiss07}, which states that any cone deformation of a finite volume
  hyperbolic cone manifold can be performed in the realm $ (0,\pi) $, and so the argument is complete.

  This argument bypasses the explicit construction of fundamental domains of the intermediate cone manifolds, but unfortunately it cannot prove
  that the shapes of cusps are preserved. Purcell~\cite{purcell08} has studied cusp shapes of hyperbolic manifolds under Hodgson--Kerckhoff type
  deformations like these: one can obtain bounds on the change on cusp shape based on the injectivity radius of the cone arc (which is in
  principle possible to compute from the fundamental domain of \zcref{thm:octahedra}), but these do not allow us to conclude that the shapes remain
  constant. Knowing the cusp shapes is important for Dehn filling applications, but if these shapes are not needed then the more abstract argument
  is slightly cleaner.

  In addition, this argument unfortunately cannot bypass the most technical part of this section, namely the proof that the manifolds $M^n $ or $ B^n $ are
  actually hyperbolic and geometrically finite (\zcref{thm:octahedra}), which is needed to conclude that $ \overline{M^n} $ is finite volume. Its main
  advantage is that it bypasses the need to rearrange all the octahedra as in \zcref{cons:cut_glue} to get them into an adventageous shape for deforming
  the fundamental polyhedron by hand.
\end{rem}

\begin{rem}[The limits $ B^\infty $ and $ M^\infty $]
  Two examples of non-free infinitely generated Kleinian groups arise as limiting cases. Indeed, the Poincar\'e polyhedron theorem holds for polyhedra in $\H^3$ with countably many
  faces, so long as ideal vertices are locally finite; see \zcref{sec:poincare}. As $ n \to \infty $, the polyhedra of \zcref{cons:cut_glue} have a well-defined limit, and this limit
  satisfies the conditions of the polyhedron theorem: the face-pairings generate discrete groups and in each case the polyhedron glues
  to give a complete hyperbolic metric of infinite volume on the quotients $ M^\infty $ and $ M^\infty $.

  The group $ \Hol(B^\infty) $ includes a rank $1$ parabolic subgroup which is not doubly cusped. The group $ \Hol(M^\infty) $ includes a pair of thrice-punctured sphere groups
  which share two conjugacy classes of maximal rank $1$ parabolic subgroups; these groups are doubly cusped. The third puncture of each thrice-punctured sphere is represented
  by a rank $1$ parabolic group that is not doubly cusped.
\end{rem}

\subsection{Dehn filling}\label{sec:filling}
The Gromov--Thurston $2\pi$-theorem, see Bleiler and Hodgson~\cite[Theorem~9]{bleiler96}, states that if $M$ is a complete hyperbolic $3$-manifold
with finite volume then a sufficient condition for a Dehn filling of $M$ to be negatively curved is that the Euclidean length of the geodesic representing
each slope is at least $2\pi$. With this machinery, we may deduce the following result.

\begin{thm}\label{cor:unknotting_tunnel}
  Let $ \mf{k} $ be a $2$-bridge knot so that the slope of $ \mf{k} $ has continued fraction decomposition $ [2m_n, 2a_n, \ldots, 2m_1, 2a_1] $.
  If $ m_n \geq 4 $ and $  a_n, m_{n-1}, \ldots, m_1, a_1 \geq 7 $, then there is a continuous family of negatively curved cone manifolds $ M_\theta $, $ \theta \in (0,2\pi] $,
  supported on the link complement $ \Sph^3 \setminus \mf{k} $ with singular locus consisting exactly of an ideal cone arc along the upper unknotting tunnel of $ \mf{k} $.
\end{thm}
\begin{proof}
  Suppose that $ \mf{k} $ has slope $ p/q $. Consider a genus $2$ handlebody $ \mc{H} $; let $ \sigma_1 $ and $ \sigma_2 $ be projections to the genus $2$ surface $ \partial \mc{H} $ of the
  cores of the two handles. Then $ S = \partial \mc{H} \setminus (\sigma_1 \union \sigma_2) $ is a four-holed sphere. Let $ \tau $ be the simple closed curve on $S$
  with homology class $ p \gamma_{1/0} + q \gamma_{0/1} $, where $ \gamma_{1/0} $ is the homology of the curve on $S$ bounding a compression disc in $ \mc{H} $ and where $ \gamma_{0/1} $
  is a curve which intersects $ \gamma_{1/0} $ exactly twice; c.f.\ Farb and Margalit~\cite[\S 2.2.5]{farb}. Then $ \Sph^3 \setminus \mf{k} $ is the manifold obtained
  by gluing a 2-handle onto $ \mc{H} $ along $ \tau $. In addition, the continued fraction decomposition $ [2m_n, 2a_n, \ldots, 2m_1, 2a_1] $ encodes the sequence of Dehn twists
  of the four-punctured sphere which produce $ \tau $ from $ \gamma_{0/1} $. There is a hyperbolic $3$-manifold $ N $ on the boundary of genus $2$ Schottky space which
  is homeomorphic to $ \mc{H} $ and which has rank $1$ cusps represented by the three loops $ \sigma_1 $, $ \tau$, and $ \sigma_2 $. Consider now the lantern manifold $ M^n $. If we Dehn
  fill $ M^n $ by $ 1/a_i $ filling the cusps $ \alpha_i $ and $ 1/m_j $ filling the cusps $ \mu_j $ then we obtain $N$: indeed, the various Dehn fillings of slope $ 1/a_i $
  (resp. $ 1/m_j $) just perform $a_i$ (resp. $m_j$) Dehn twists along $ \gamma_{1/0} $ (resp. $ \gamma_{0/1} $). The point is that $N$ is obtained by drilling the upper unknotting tunnel
  from $ \Sph^3 \setminus \mf{k} $ and the loop $ \tau $ is the meridian of the unknotting tunnel. By the $2\pi$-theorem, and using the cusp shapes computed in \zcref{thm:cone_deform_complex},
  the filling is hyperbolic whenever the conditions in the theorem statement are satisfied. Explicitly, $ \sqrt{1 + (2n)^2} > 2\pi $ if and only if $ n \geq 4 $ and $ \sqrt{1 + n^2} > 2\pi $
  if and only if $ n \geq 7 $.

  Next, we note that the proof of the $2\pi$ theorem given by Blieler and Hodgson~\cite[Theorem~9]{bleiler96} works without change for cone manifolds: it relies
  only on a lemma of Gromov (Lemma~10 \textit{op.\ cit.}) which fills in the hyperbolic metric on the interior of a solid torus to extend a hyperbolic metric on a neighbourhood
  of the boundary, and this is independent of whether there are singular arcs away from the horoball being filled. It is also clear from the proof of this
  lemma that the metric in the filled horoball is continuous with respect to the metric near the boundary, so continuous variation of the metric in the manifold being
  filled will induce a continuous metric on the filled manifolds.

  Now we can put these observations together: if $ M^n_\theta $ is the cone manifold obtained by cone-deforming $ M^n $ using \zcref{thm:cone_deform_complex} so that
  the cone arc has angle $ \theta $, then we can use the $2\pi$ theorem to Dehn fill it and obtain a cone manifold $ N_\theta $ supported on $ \Sph^3 \setminus \mf{k} $
  with a cone arc of angle $ \theta $ along the upper unknotting tunnel.
\end{proof}

\begin{figure}
  \centerline{\begin{subfigure}[t]{0.33\textwidth}\centering
    \labellist
    \small\hair 2pt
    \pinlabel {$\mu_1$} [r] at 44 280
    \pinlabel {$\mu_2$} [r] at 44 233
    \pinlabel {$\mu_3$} [r] at 44 192
    \pinlabel {$\mu_4$} [r] at 44 161
    \pinlabel {$\chi_1$} [l] at 247 283
    \pinlabel {$\chi_2$} [l] at 247 80
    \pinlabel {$\alpha_2$} [b] at 164 356
    \pinlabel {$\alpha_3$} [b] at 233 356
    \pinlabel {$\alpha_4$} [b] at 308 356
    \endlabellist
    \includegraphics[width=\textwidth]{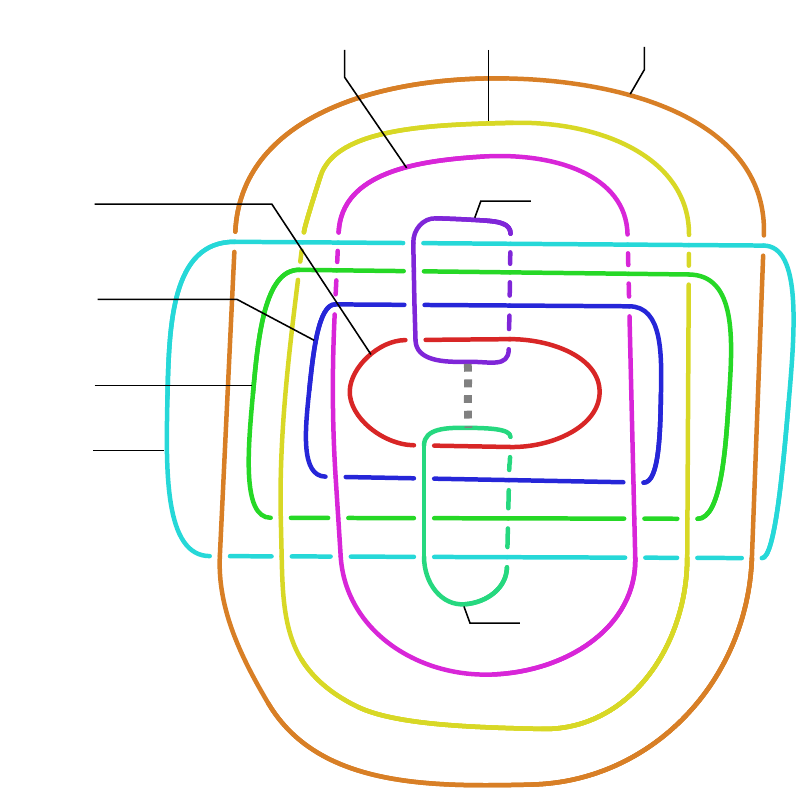}
    \caption{A link analogous to $ \mf{b}^4 $ but with bifurcated central cusp. Also shown (dotted) is the embedded arc which is drilled out by cone deformation.\label{fig:non_borromean_link}}
  \end{subfigure}\hspace{1cm}
  \begin{subfigure}[t]{0.6\textwidth}\centering
    \labellist
    \small\hair 2pt
    \pinlabel {rank one cusps} at 20 17
    \pinlabel {$\alpha_2$} [l] at 249 80
    \pinlabel {$\alpha_3$} [l] at 249 69
    \pinlabel {$\alpha_4$} [l] at 249 58
    \pinlabel {$\mu_1$} [b] at 241 161
    \pinlabel {$\mu_2$} [b] at 259 161
    \pinlabel {$\mu_3$} [b] at 279 161
    \pinlabel {$\mu_4$} [b] at 298 161
    \endlabellist
    \includegraphics[width=\textwidth]{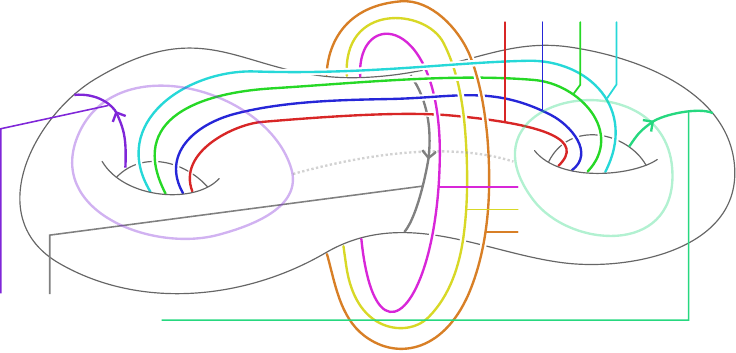}
    \caption{A manifold analogous to $M^4$ but with a different pattern of rank $1$ cusps. The central cusps $ \chi_1 $ and $ \chi_2 $ have become closed loops on the genus $2$ surface, each dual to a rank $1$ cusp. The cone arc is dual to the third rank $1$ cusp.\label{fig:non_borromean_lantern}}
  \end{subfigure}}\vspace{.5cm}
  \centerline{\begin{subfigure}[t]{.6\textwidth}\centering
    \labellist
    \small\hair 2pt
    \pinlabel {$\alpha_2$} [b] at 186 217
    \pinlabel {$\chi_1$} [b] at 233 217
    \pinlabel {$\chi_2$} [b] at 336 217
    \pinlabel {$\mu_1$} [b] at 421 217
    \pinlabel {$\mu_2$} [b] at 462 217
    \endlabellist
    \includegraphics[width=\textwidth]{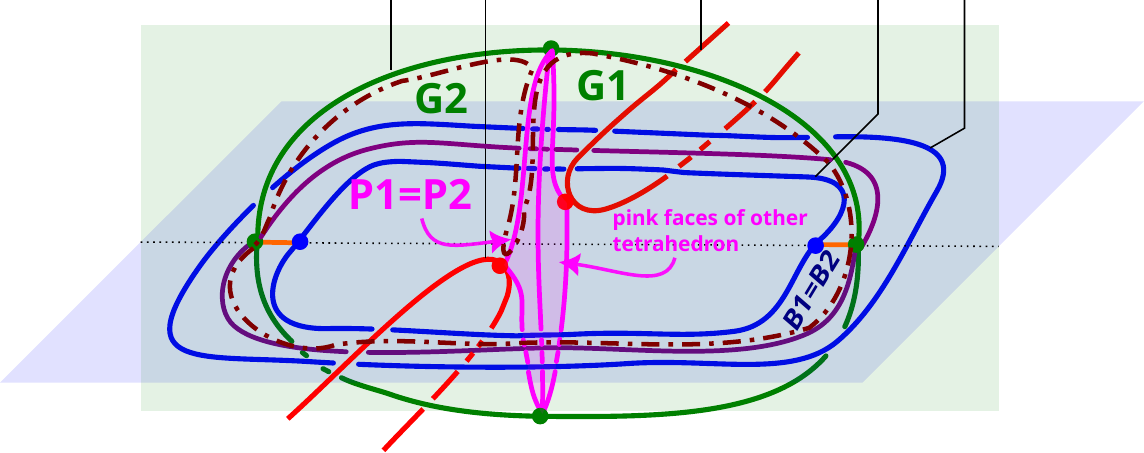}
    \caption{The two octahedra that lie inside $ \mu_1 $. Edge colours match \zcref{fig:octahedra_single}, and just as in \zcref{fig:octahedra_s03s_outer} we have also outlined
    one of the unshaded faces.\label{fig:octahedra_s03s_nonbor}}
  \end{subfigure}\hspace{.5cm}
%   \begin{subfigure}[c]{0.49\textwidth}
%     \includegraphics[width=\textwidth]{nonborromean_nto1cusp}
%     \caption{Cusp horoballs with $\dagger$ at $\infty$.\label{fig:non_borromean_octahedron_cusp}}
%   \end{subfigure}\hfill
  \begin{subfigure}[t]{0.49\textwidth}
    \includegraphics[width=\textwidth]{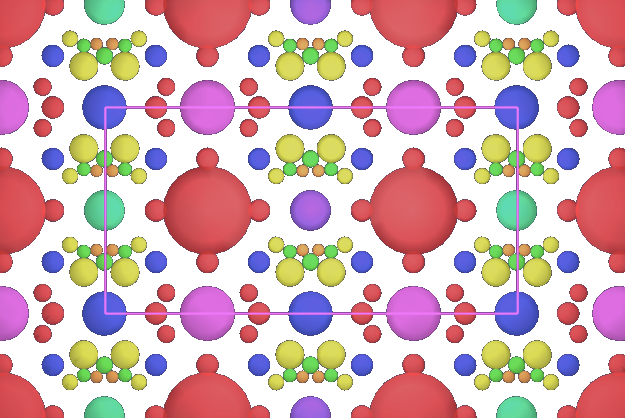}
    \caption{Cusp horoballs with $\mu_1$ at $ \infty $.\label{fig:non_borromean_2to1_cusp}}
  \end{subfigure}}
  \caption{The manifolds analogous to $ B^n $ and $ M^n $ that can be used to study $2$-component links.\label{fig:nonborromean}}
\end{figure}

Replacing the stacked Borromean rings with a slightly different link, we can obtain a version of \zcref{cor:unknotting_tunnel} for links of two components. We will only provide
a sketch of the proof as it requires no new ideas.
\begin{prp}\label{prp:unknotting_tunnel_link}
  Let $ \mf{k} $ be a $2$-bridge link with $2$ components whose slope has continued fraction decomposition $ [2m_n, 2a_n, \ldots, 2m_1] $. If $ m_n \geq 4 $ and $ a_n, m_{n-1}, \ldots, m_1 \geq 7 $,
  then there is a continuous family of negatively curved cone manifolds $M_\theta$, parameterised by $\theta\in (0,2\pi]$, supported on the link complement $ \Sph^3 \setminus \mf{k} $ and with singular locus consisting
  exactly of an ideal cone arc of angle $\theta$ along the upper unknotting tunnel of $ \mf{k} $.
\end{prp}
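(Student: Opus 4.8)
The plan is to rerun the proof of \zcref{cor:unknotting_tunnel} almost verbatim, replacing the $n$-stacked Borromean rings $\mf{b}^n$ by the link $\mf{c}^n$ of \zcref{fig:non_borromean_link}, whose central cusp is split into two cusps $\chi_1, \chi_2$. First I would record a diagrammatic algorithm for $\mf{c}^n$ in the style of \zcref{defn:explicit}, the only change being that the single central rectangle $\chi$ is replaced by two rectangles $\chi_1, \chi_2$ enclosing the upper and lower halves of the array of $\mu$- and $\alpha$-rectangles, so that the drilled arc $\tau$ runs from $\chi_1$ to $\chi_2$. A right-angled ideal octahedral decomposition of $C^n := \Sph^3 \setminus \mf{c}^n$ is then built exactly as in the proof of \zcref{thm:octahedra}: cut the horizontal sphere $S$ through the cusps $\mu_j$ into four-punctured and thrice-punctured spheres, take one octahedron in each quadrant of each adjacent pair of four-punctured spheres (as in \zcref{fig:octahedra_single}), take the pair of octahedra outside $\mu_n$ (as in \zcref{fig:octahedra_s03s_outer}), and replace the region inside $\mu_1$ with the pair of octahedra of \zcref{fig:octahedra_s03s_nonbor}. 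Every edge is again surrounded by four octahedra and all tangency side-pairings are parabolic, so the Poincar\'e polyhedron theorem yields the complete structure; the Futer--Purcell wall-count used in the corollary to \zcref{thm:octahedra} then gives cusp shapes $2$ for the outermost cusp $\mu_n$ and the innermost cusp $\alpha_2$, $1$ for the remaining $\mu_j$ and $\alpha_j$, and a value linear in $n$ for $\chi_1$ and $\chi_2$, which are never filled.

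Running the rearrangement of \zcref{cons:cut_glue} and then the wall-pulling argument of \zcref{thm:cone_deform_complex} on this domain produces a smooth path of cone-manifold holonomy groups interpolating between the complete structure on $C^n$ (at cone angle $2\pi$) and the manifold obtained by drilling $\tau$ from $C^n$, shown in \zcref{fig:non_borromean_lantern} (at cone angle $0$), each intermediate member carrying a single ideal cone arc of angle $\theta$ along $\tau$; along this path all cusp shapes are preserved except that of $\alpha_2$, whose normalised longitude runs through an interval terminating, as $\theta\to 0$, at a value linear in $n$. To close, I would set up the handlebody picture of \zcref{cor:unknotting_tunnel}: a $2$-component $2$-bridge link complement is a $2$-handle attachment to a genus $2$ handlebody $\mc{H}$ along a curve on the four-holed sphere $\partial\mc{H} \setminus (\sigma_1 \union \sigma_2)$, and the now odd-length continued fraction $[2m_n, 2a_n, \ldots, 2m_1]$ records the Dehn-twist word producing that curve; drilling the upper tunnel from $\Sph^3 \setminus \mf{k}$ gives the manifold $N$ on the boundary of genus $2$ Schottky space with rank $1$ cusps represented by $\sigma_1, \tau, \sigma_2$, which is recovered from the $\tau$-drilled $C^n$ by filling the cusps $\alpha_i$ along slope $1/a_i$ and the cusps $\mu_j$ along slope $1/m_j$ --- the parity change relative to the knot case being absorbed exactly by the bifurcated central cusp. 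The $2\pi$-theorem, applied with the cusp shapes above and valid for cone manifolds via the Gromov-lemma argument recalled in \zcref{cor:unknotting_tunnel}, makes all these fillings negatively curved, hence hyperbolic, precisely when $m_n \geq 4$ and $a_n, m_{n-1}, \ldots, m_1 \geq 7$. Dehn filling the whole cone-manifold family then yields cone manifolds $M_\theta$ supported on $\Sph^3 \setminus \mf{k}$ with a cone arc of angle $\theta$ along the upper tunnel; letting $\theta \to 0$ produces the claimed maximal cusp point on genus $2$ Schottky space, and continuity of the metric shows that the upper tunnel, being the axis of an elliptic in $M_\theta$ for every $\theta < 2\pi$, is isotopic to a geodesic in $M_{2\pi}$, the complete structure on $\Sph^3 \setminus \mf{k}$.

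The one step requiring genuine care is the identification in the last paragraph: as $(m_n, a_n, \ldots, m_1)$ ranges over admissible tuples, the fillings of the bifurcated lantern manifold must sweep out exactly the sufficiently twisted $2$-component $2$-bridge links, with the drilled arc $\tau$ matching the upper tunnel. Concretely this reduces to checking that replacing the single loop dual to $\chi$ in \zcref{fig:lanternfold_stack} by the two loops dual to $\chi_1, \chi_2$ in \zcref{fig:non_borromean_lantern} deletes the final $\gamma_{0/1}$-twist from the Dehn-twist word, so that the realisable slopes are exactly the odd-length continued fractions. This is a bookkeeping computation in the mapping class group of the four-holed sphere, of the same flavour as in the knot case but a step more delicate, and since it introduces no new geometric idea the present argument is stated only as a sketch.
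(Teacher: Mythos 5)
Your proposal follows the paper's proof essentially verbatim: the paper likewise sketches the argument by splitting $\chi$ into $\chi_1,\chi_2$ (discarding the now-trivial $\alpha_1$), replacing the four inner octahedra by the wrap-around pair of \zcref{fig:octahedra_s03s_nonbor}, pulling apart the lifts of the two central cusps, and rerunning \zcref{cor:unknotting_tunnel}, deferring the same continued-fraction bookkeeping that you defer. The one point where your bookkeeping diverges is the role you assign to $\alpha_2$: the paper places $\mu_1$ (not $\alpha_2$) at infinity for the pulling-apart step, so it is $\mu_1$ that inherits the shape-$2$/deformed-longitude role by symmetry with the outer pair around $\mu_n$ --- though since the hypotheses demand $m_1, a_2 \geq 7$ in any case, this does not affect the sufficiency of the stated bounds.
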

\begin{proof}
  Take the central component of $ \mf{b}^n $ and split it into two loops, $ \chi_1$ and $ \chi_2 $, as in \zcref{fig:non_borromean_link}; the component $ \alpha_1$ is
  now be homotopic to a point, and so we throw it away. The complement of this link in $ \Sph^3 $ has an octahedral decomposition which can be found, just as
  in \zcref{thm:octahedra}, by placing one of the two central cusps at $ \infty $; the cusp shape of each central cusp is now $(2n-2) $ rather than $(4n-2) $, and the
  arrangement of the octahedra is exactly the same as in the stacked Borromean rings except for the inner four octahedra which are replaced by a pair of octahedra
  glued to `wrap around' just like the outer pair of octahedra---we show these two octahedra in \zcref{fig:octahedra_s03s_nonbor}. If the cusp $ \mu_1 $ to is moved to $ \infty $ then
  we end up with an aspect ratio $2:1$ rectangle as in \zcref{fig:non_borromean_2to1_cusp}. Just as in \zcref{thm:cone_deform_complex} this fundamental domain can be modified
  continuously to `pull apart' the two parabolic fixed points corresponding to the two central cusps of the link; this cone-deforms the link in \zcref{fig:non_borromean_link}
  to the manifold in \zcref{{fig:non_borromean_lantern}} with $2n-1$ rank two cusps and conformal boundary consisting of two thrice-punctured spheres glued along rank $1$ cusps
  along the indicated curves. Running through the proof of \zcref{cor:unknotting_tunnel} with this new setup gives the result.
\end{proof}

\begin{rem}\label{rem:sakuma_conjecture}
  Sakuma and others have conjectured (see Akiyoshi, Wada, Sakuma, and Yamashita~\cite[preface]{akiyoshi}) that there exists a family
  of complete \emph{hyperbolic} metrics which realise the drilling of the upper unknotting tunnel of a hyperbolic $2$-bridge link, and
  Lee and Sakuma~\cite[\S 9]{lee13} sketched a possible approach via a cone-manifold version of the Casson--Rivin theory given
  for two-bridge links by Futer~\cite{gueritaud06}, but the details have not appeared. \zcref[S]{cor:unknotting_tunnel} and \zcref{prp:unknotting_tunnel_link} give
  evidence for this conjecture: they show that for sufficiently complicated links there is such a deformation through
  pinched negatively curved metrics which have constant curvature $-1$ in the region bounded away from the cone singularities. Actually,
  the argument given by the anonymous referee in \zcref{rem:hk_deformation} can be used to prove the conjecture fully.

  Let $ \mf{b}$ be any $2$-bridge link (hyperbolic or not), and let $M_{\mf{b}}$ be the corresponding maximal cusp on the boundary of the Riley slice. Let $h$ be
  the hyperelliptic involution in $\Aut(M_{\mf{b}})$ (see \S 2 of \cite{akiyoshi21} for the action of $h$ on $ \mf{b}$). Then $M_{\mf{b}}/\langle h\rangle$ is an orbifold
  with two thrice-marked spheres ($2$ punctures and one $\pi$-cone point each) on the boundary. Since $M_{\mf{b}}$ is geometrically finite, the orbifold $N_{\mf{b}}$
  obtained by doubling $M_{\mf{b}}/\langle h \rangle $ across its boundary is finite volume with only cusps and loop singularities. In addition, since the axis of $h$
  contained the drilled unknotting tunnel, a deformation of the tunnel through angles $(0,\pi)$ in $N_{\mf{b}}$ corresponds to a deformation through angles $(0,2\pi)$
  in $M_{\mf{b}}$. But by the global rigidity theorem of Weiss~\cite{weiss07}, it is possible to deform any closed geodesic in a finite volume orbifold through cone
  angles in the interval $(0,\pi)$, which completes the proof. If $ \mf{b} $ is hyperbolic in the limit as the angle goes to $2\pi$ then the limiting manifold is the complete
  hyperbolic metric on the manifold $ \Sph^3 \setminus \mf{b} $.

  We note that it is a result of Futer~\cite{futer07b} that such involutions which fix the unknotting tunnel occur only for $2$-bridge links and so this
  argument does not generalise to the full conjecture of Sakuma, which is that cone deformations can be used to drill out the unknotting tunnel
  of \emph{any} tunnel number $1$ link~\cite[x]{akiyoshi}. For further context see Sakuma~\cite[\S 4]{sakuma98}.

  Our by-hand argument is still of interest as it generalises to Dehn fillings of all manifolds containing expansion joints, even those without any order two rotational symmetries. In particular, in the next section
  we show that all fully augmented links admit such cone deformations, and since every link in $ \Sph^3 $ is a Dehn filling of a fully augmented link we obtain cone deformations
  for all links through negatively curved metrics via the methods of \zcref{cor:unknotting_tunnel} and \zcref{prp:unknotting_tunnel_link}; these cannot be obtained
  using the order two rotation trick (c.f.\ \zcref{rem:hk_fal} below).
\end{rem}

\section{Concave lenses and other expansion joints}\label{sec:lens}
We isolate the substructure of the manifold which makes the arguments in the previous sections work.

\begin{defn}
  A \df{concave lens} is a polyhedral complex made up of
  $n$ distinct octahedra $ C_1,\ldots,C_n$ that all share a vertex $v$ and so that $ C_{i-1} \inter C_i $ and $ C_i \inter C_{i+1} $ (subscripts taken modulo $n$)
  are $2$-faces that intersect only at $v$. Roughly speaking, the $1$-skeleton of the complex is a cycle that does `not turn any corners'. The \df{outer vertices}
  of the lens are the $n$ vertices that do not lie in faces incident with $ v $. The \df{inner faces} are the $2n$ boundary faces that are incident with $v$.
  The \df{rim faces} are the $2n$ faces that meet an outer vertex and do not meet an inner face.
\end{defn}

In \zcref{fig:lens} we show a concave lens with $ n = 8 $. We will now give some conditions on a concave lens embedded in a manifold that will allow it to act
as an expansion joint in the sense we described in the introduction: the longitude of the central cusp can be elongated, producing gaps
in the polyhedral decomposition of the manifold around the opposite vertices, while preserving the conformal structure of ends not adjacent to the lens.

\begin{figure}
  \centering
  \includegraphics[width=.5\textwidth]{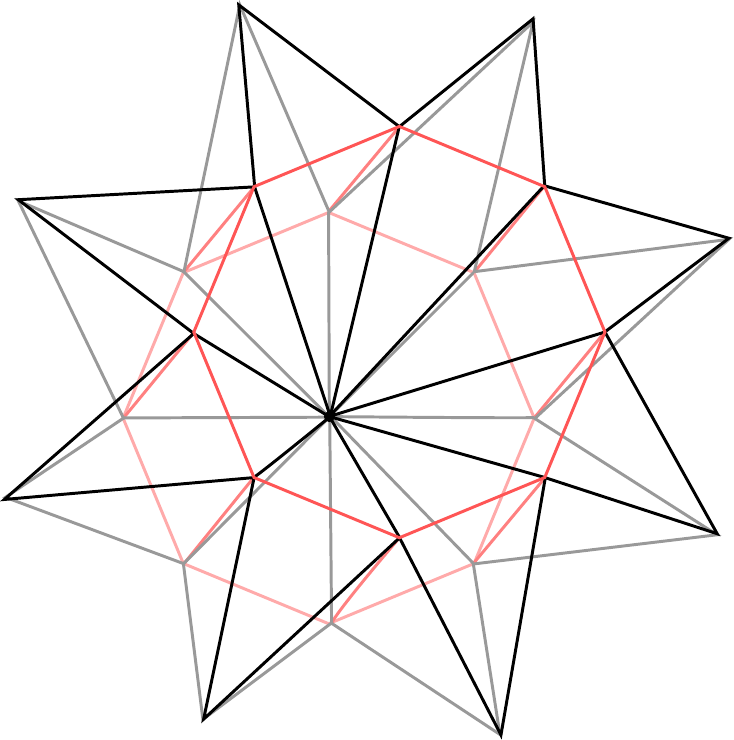}
  \caption{Concave lens of eight octahedra.\label{fig:lens}}
\end{figure}

\begin{thm}\label{thm:concave_lens}
  Let $M$ be a hyperbolic $3$-manifold with convex core $ K(M) $ decomposed as a polyhedral complex $ \mc{C} $. Let $ v $ be an ideal vertex of $ \mc{C} $
  that models a rank $2$ cusp of $M$, and let $ \hat{\mc{C}} $ be a connected lift of $ \mc{C} $ to the universal cover of $ K(M) $ so that $v$ lifts to a
  single vertex. Let $ \sim $ be the equivalence relation on $ \hat{\mc{C}} $ induced by the quotient projection.
  Suppose that $ \hat{\mc{C}} $ contains an embedded concave lens $\mc{L}$ made up of ideal octahedra, with central vertex $v$, satisfying the following properties:
  \begin{enumerate}
    \item The outer vertices of $ \mc{L} $ are lifts of rank $2$ cusps of $M$.
    \item The $\sim$-class of each outer vertex of $ \mc{L} $ consists only of outer vertices of $ \mc{L} $.
    \item The $\sim$-class of each rim face of $ \mc{L} $ consists only of rim faces of $ \mc{L} $.
    \item $ \mc{L} $ is symmetric under reflection in a geodesic plane that contains $ v $ and all the outer vertices.
  \end{enumerate}

  Then there exists a smooth path $M_\theta$ of cone manifolds, parameterised by $ \theta \in (0,2\pi] $ consisting of hyperbolic cone manifolds that are supported on the
  topological manifold $ M_{2\pi} = M $ and where the cone arcs in $ M_\theta $ join the outer vertices of $ \mc{L} $ along the rim faces. If $M$ has any conformal ends,
  then their conformal structure is preserved by the deformation, and the shapes of rank $2$ cusps that do not meet $ \mc{L} $ are preserved.
\end{thm}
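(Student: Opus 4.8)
The plan is to run the cut-and-translate argument of \zcref{prp:cone_deform_simple} and \zcref{thm:cone_deform_complex} in this abstract setting, using the four hypotheses on $\mc{L}$ to play the roles that the explicit symmetries of the octahedral decomposition played there. First I would normalise: conjugate $\Hol(M)$ so that the central vertex $v$ lies at $\infty$ in the upper half-space model of $\H^3$. Every octahedron $C_i$ of $\mc{L}$ then has one ideal vertex at $\infty$, so that the faces of $C_i$ through $v$ — its inner faces together with the shared faces $C_{i-1}\inter C_i$ and $C_i\inter C_{i+1}$ — are vertical half-planes, while its two rim faces are domes meeting the single outer vertex $s_i$ of $C_i$ on the sphere at infinity. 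By hypothesis (4) the reflection plane is a vertical plane $P$ over a line $\ell\subset\C$, and $\ell$ passes through every $s_i$. I would use the pair $(\hat{\mc{C}},\sim)$ as a presentation of $M$ by a locally finite polyhedral complex glued along side-pairings, arranged so that this one lift of $\mc{L}$ is visible, $v$ is at $\infty$, and $P$ is a plane of reflective symmetry of $\hat{\mc{C}}$. Hypotheses (2) and (3) then say precisely that the side-pairings moving outer vertices, respectively rim faces, only move them to other outer vertices, respectively other rim faces, of this same copy of $\mc{L}$; so opening $\hat{\mc{C}}$ up along $P$ disturbs no gluing outside $\mc{L}$.

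Next I would construct the deforming complexes $\hat{\mc{C}}_\theta$, $\theta\in(0,2\pi]$. The lens wraps around the longitude of the rank $2$ cusp $v$; I would cut $\hat{\mc{C}}$ open along $P$ into its two mirror halves and translate one half away from $P$ by the Euclidean translation $T_\theta$ transverse to $\ell$ (equivalently, lengthen the longitude of $v$), by an amount that is a smooth strictly decreasing function of $\theta$ vanishing at $\theta=2\pi$. The translation length is pinned down by the same elementary trigonometry as the $r_\theta=\tfrac{1}{4}\sec^2(\theta/2)$ computation of \zcref{prp:cone_deform_simple}: it is the unique value for which the dihedral angles around the edges bordering the gap opened at each outer vertex sum to $\theta$ rather than to $2\pi$. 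Between the two translated half-lenses this inserts, at each outer vertex, a single new ideal quadrilateral face, so that $\hat{\mc{C}}_\theta$ is combinatorially $\hat{\mc{C}}$ with $n$ extra faces and $\hat{\mc{C}}_{2\pi}=\hat{\mc{C}}$. I would define side-pairings on $\hat{\mc{C}}_\theta$ by the same combinatorial pattern as on $\hat{\mc{C}}$ — legitimate exactly because of (2) and (3) — together with a pairing of each new quadrilateral to its mirror image across the gap; the uniquely determined M\"obius maps realising these pairings depend real-analytically on $T_\theta$, hence smoothly on $\theta$.

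The conclusion then follows from the cone-manifold Poincar\'e polyhedron theorem of \zcref{sec:poincare}, as an exercise in checking edge cycles. Every edge of $\hat{\mc{C}}_\theta$ outside $\mc{L}$, and every edge inside $\mc{L}$ not incident with an outer vertex, has a cycle whose geometry agrees with the corresponding cycle of $\hat{\mc{C}}$ — the octahedra of $\mc{L}$ are only rigidly translated — so these sum to $2\pi$; the edges meeting the outer vertices along the rim faces form the one new cycle, which sums to $\theta$ by construction, so the singular locus is exactly a family of ideal cone arcs joining the outer vertices of $\mc{L}$ along its rim faces. Ideal vertices remain locally finite: hypotheses (1) and (2) keep the outer vertices modelling rank $2$ cusps (with a shape that is permitted to change, since they meet $\mc{L}$), and the cusp at $\infty$ simply acquires a longer lattice generator transverse to $\ell$. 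Finally, the parabolic side-pairings generating the stabilisers of conformal ends and of rank $2$ cusps disjoint from $\mc{L}$ are untouched by the surgery — their isometric circles are at worst rigidly translated — so those conformal structures and cusp shapes are preserved, and $\hat{\mc{C}}_{2\pi}=\hat{\mc{C}}$ recovers $M$. Smoothness of $\theta\mapsto\Hol(M_\theta)$ is smoothness of $T_\theta$.

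The step I expect to be the main obstacle is the bookkeeping in the previous two paragraphs: verifying that cutting $\hat{\mc{C}}$ along $P$ and retranslating genuinely yields a polyhedral complex that reglues to $M$ away from $\mc{L}$, which is exactly where hypotheses (1)--(4) must be used in full force. Condition (3) guarantees that the only new identification is the quadrilateral-to-quadrilateral one, so that the bulk of $M$ is untouched; condition (2) keeps the opened-up outer vertices closing up as rank $2$ cusps rather than producing incomplete ends; condition (1) forces the dihedral-angle arithmetic through the ideality of the octahedra; and condition (4) makes the cut a global isometric involution, so that the two halves can be pulled apart coherently. I would also remark that, in contrast with the general deformation theory built on Hodgson--Kerckhoff, no ``critical angle'' obstruction arises here: the complexes $\hat{\mc{C}}_\theta$ are completely explicit, so hyperbolicity of $M_\theta$ holds for every $\theta\in(0,2\pi]$ by construction.
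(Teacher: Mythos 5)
Your proposal is correct and is essentially the paper's own argument: place $v$ at $\infty$, pull the two mirror halves of the lens apart transverse to the line through the outer vertices, and verify via the cone-manifold Poincar\'e theorem of \zcref{sec:poincare} that only the edge cycles at the outer vertices change. The only ingredients of the paper's proof that you elide are that embeddedness of $ \mc{L} $ forces the lens monodromy $f$ to be a primitive element of $ \Stab_{\Hol(M)}(\infty) $ (so that \zcref{lem:lattice} supplies the second lattice generator, the one that gets skewed) and that no vertices of $ \hat{\mc{C}} $ other than $\infty$ lie above the projection of the lens; conversely, your assumption that the reflection plane $P$ is a symmetry of all of $ \hat{\mc{C}} $ is stronger than hypothesis (4) grants, but your argument never actually uses more than the symmetry of $ \mc{L} $ itself.
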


\begin{rem}\label{rem:lens_pictures}
  We have arranged all images of cusp horoballs in this paper so that the outer cusps from the relevant concave lens appear along the horizontal line of symmetry. This means
  that the central plane of symmetry of the cusp is the hyperbolic plane through $ \infty $ supported on this horizontal line, and the cone arcs appear along this line (c.f.\ \zcref{fig:borromean_7cpt_cusps}).
\end{rem}

To prove \zcref{thm:concave_lens}, we will need the following standard lemma.
\begin{lem}\label{lem:lattice}
  Let $ \Lambda $ be a rank $2$ lattice. If $ f \in \Lambda $ is primitive, i.e.\ if whenever there exist $ n \in \N $ and $ g \in \Lambda $ with $ n \cdot g = f $
  then $ n = 1 $ and $ g = f $, then $ \{f\} $ may be extended to a basis of $ \Lambda $. \qed
\end{lem}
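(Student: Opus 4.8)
The plan is to reduce the statement to a fact about $2\times 2$ integer matrices and close the argument with B\'ezout's identity. First I would fix an arbitrary basis $e_1, e_2$ of $\Lambda$ and write $f = a e_1 + b e_2$ with $a, b \in \Z$. The preliminary observation is that $f$ is primitive if and only if $\gcd(a,b) = 1$. Indeed, if $d = \gcd(a,b) > 1$ then $f = d\cdot\bigl((a/d)e_1 + (b/d)e_2\bigr)$ is a nontrivial factorisation inside $\Lambda$, contradicting primitivity; conversely, if $f = n g$ with $n \in \N$ and $g = p e_1 + q e_2 \in \Lambda$, then $n$ divides both $a = np$ and $b = nq$, so $n$ divides $\gcd(a,b) = 1$, forcing $n = 1$ and $g = f$.

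Next, given $\gcd(a,b) = 1$, B\'ezout's identity supplies integers $u, v$ with $au + bv = 1$. I would set $g := -v\,e_1 + u\,e_2 \in \Lambda$. Then $(f, g)$ is expressed in terms of $(e_1, e_2)$ by the matrix $\left(\begin{smallmatrix} a & b \\ -v & u\end{smallmatrix}\right)$, whose determinant is $au + bv = 1$. Hence this matrix lies in $\GL(2,\Z)$, so it carries the basis $(e_1,e_2)$ to another basis $(f,g)$ of $\Lambda$. Thus $\{f\}$ extends to the basis $\{f, g\}$, as required.

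I do not expect any real obstacle here, since the lemma is elementary; the only mild point of care is checking the equivalence between primitivity and coprimality of the coordinates, which is the only place the hypothesis is used. If one preferred a basis-free formulation, the cleanest alternative is to note that primitivity of $f$ is exactly the statement that $\Lambda/\Z f$ is torsion-free; being finitely generated, torsion-free, and of rank $1$, it is isomorphic to $\Z$, so the short exact sequence $0 \to \Z f \to \Lambda \to \Z \to 0$ splits because $\Z$ is free, and any splitting furnishes a complement to $\Z f$. I would include the B\'ezout version in the text as it is the most self-contained.
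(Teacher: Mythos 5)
Your argument is correct and complete: the reduction of primitivity to $\gcd(a,b)=1$ in coordinates is verified in both directions, and the B\'ezout matrix $\left(\begin{smallmatrix} a & b \\ -v & u\end{smallmatrix}\right)$ indeed has determinant $1$, so $(f,g)$ is a basis. The paper itself gives no proof --- the lemma is stated as standard with the \textsc{qed} symbol attached directly to the statement --- so there is nothing to compare against; your B\'ezout argument (and the torsion-free splitting alternative) is exactly the expected routine verification, the only implicit point being that primitivity rules out $f=0$ so that $\gcd(a,b)$ is defined.
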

% \begin{proof}
%   Pick an arbitrary basis $ (\mathbf{e}_1, \mathbf{e_2}) $ for $ \Lambda $; write $ f = p \mathbf{e}_1 + q \mathbf{e}_2 $. Then $ f $ is part of a basis
%   if (and only if) there exists a matrix of the form
%   \begin{displaymath}
%     \begin{bmatrix}
%       p & m\\ q & n
%     \end{bmatrix} \in \SL(2,\Z)
%   \end{displaymath}
%   which would act as a change-of-basis. This is equivalent to solving the Diophantine equation $ pn - qm = 1 $ and by B\'ezout's lemma this equation has a solution
%   if and only if $ \gcd(p,q) = 1 $. If $ \ell $ divides $ p $ and $ q $ then $ \ell ((p/\ell) \mathbf{e}_1 + (q/\ell) \mathbf{e}_2) = f $, so by primitivity $ \ell = 1 $
%   proving the result.
% \end{proof}

\begin{proof}[Proof of \zcref{thm:concave_lens}]
  Choose a fundamental domain for $M $ consisting of a lift $ \hat{\mc{C}} $ of $ \mc{C} $ to $ \H^3 $. By an appropriate isometry we may assume that $ v $ lifts to $ \infty $. Let $ f $ be the element
  of $ \Hol(M) $ corresponding to the closed around $ v $ in the concave lens octahedra; without loss of generality, $ f $ is a vertical translation, along the $y$-axis. Since the lens is embedded,
  $ f $ is a primitive element of the rank $2$ lattice $ \Stab_{\Hol(M)} (\infty) $ and hence by \zcref{lem:lattice} we can choose a generating set for this lattice containing $ f $.

  \begin{figure}
    \labellist
    \small\hair 2pt
    \pinlabel {lift of lens} [r] at 95 195
    \endlabellist
    \centering
    \includegraphics[width=.5\textwidth]{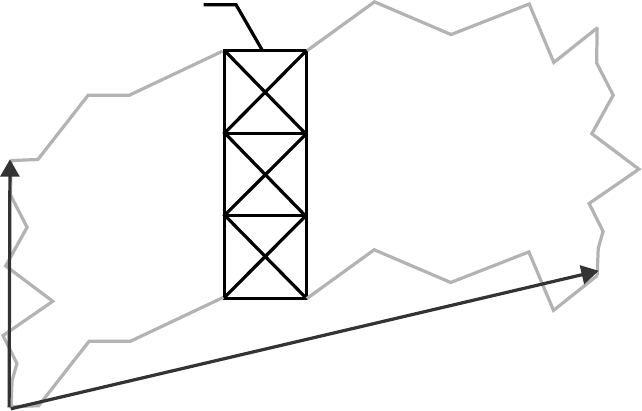}
    \caption{Lift to $ \H^3 $ of a manifold containing a concave lens made up of ideal regular octahedra.\label{fig:lens_lift}}
  \end{figure}

  The fundamental domain for $ M $ coarsely looks like \zcref{fig:lens_lift}; since the lens is embedded, there are no vertices of the complex $ \hat{\mc{C}} $ lying above the rectangular projection of
  the lens octahedra to the Riemann sphere other than $ \infty $ (but it is possible for other vertices to lie below, as seen for instance in \zcref{fig:cut_glue_3} above). Just as
  in \zcref{thm:cone_deform_complex} we may now horizontally pull apart the two halves of this rectangular projection, without changing the combinatorial or geometric structure of any other part of $ \hat{C} $ (in
  particular, we preserve all angle sums away from the lift of the lens). All cusp shapes remain constant except for the shape of the cusp at $ \infty $; the translation vector of $ f $
  remains constant, while the translation vector of the other basis element of $ \Stab_{\Hol(M)} (\infty) $ is skewed (the component parallel to the translation vector of $ f $ remains
  constant and the orthogonal component increases in length by exactly the width of the rectangular projection of the lens octahedra).
\end{proof}

\subsection{Fully augmented links}
We present a family of examples realising \zcref{thm:concave_lens}, arising from fully augmented links.
The components of a fully augmented link are separated into \df{augmentation cusps} (also known as \df{crossing circles}) which each bound twice-punctured discs
normal to some plane of reflective symmetry, and \df{planar cusps} (also known as \df{knot strands}) that lie in the reflection plane; for more
information see Purcell~\cite{purcell09}. Every planar cusp in a fully augmented link bounds two multiply-punctured discs in the reflection plane.

\begin{thm}\label{prp:fal}
  Let $ \mf{l} $ be a fully augmented link with a planar cusp $ \omega $ such that:
  \begin{enumerate}[label={(\roman*)}]
    \item There are no self-augmentations of $ \omega $ (i.e.\ no augmentation cusp bounds a disc that meets $\omega$ twice), and\label{cond:fal1}
    \item One of the two discs, $D$, bounded by $ \omega $ in the reflection plane of $ \mf{l}$ must be punctured only by augmentation cusps.\label{cond:fal2}
  \end{enumerate}
  Then $\omega$ forms the centre of a concave lens satisfying the conditions of \zcref{thm:concave_lens}, and increasing the longitudinal length of $\omega$
  forms cone arcs in the geodesic surface $D$ joining the augmentation cusp punctures together.
\end{thm}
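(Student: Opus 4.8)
The plan is to verify directly that the combinatorics of a fully augmented link complement, decomposed into ideal octahedra in the standard way of Purcell, places a concave lens around the planar cusp $\omega$ whose outer vertices are exactly the augmentation cusps puncturing the disc $D$. I would begin by recalling the polyhedral decomposition: cutting $\Sph^3 \setminus \mf{l}$ along the reflection plane produces two isometric ideal right-angled polyhedra, and slicing each further along the twice-punctured discs bounded by the augmentation cusps decomposes the manifold into ideal octahedra, one for each pair of an edge of the reflection-plane graph and a side of the reflection plane. Placing the planar cusp $\omega$ at $\infty$ in the universal cover, the two copies of $D$ (one on each side of the reflection plane) contribute a cyclic chain of octahedra all sharing the vertex at $\infty$, with consecutive octahedra meeting along the lifted twice-punctured discs; this is precisely the defining pattern of a concave lens, and the ``no corners'' condition holds because $\omega$ is a single planar cusp rather than a concatenation turning at a crossing.

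Next I would check the four hypotheses of \zcref{thm:concave_lens} one at a time. The outer vertices of $\mc{L}$ are the ideal points where the lifted twice-punctured discs meet the boundary away from $\infty$; by construction these are lifts of the augmentation cusps puncturing $D$, which are rank $2$ cusps of $\mf{l}$ (crossing circles are always rank $2$ cusps in a fully augmented link), giving condition (1). Condition \ref{cond:fal1} — no self-augmentation of $\omega$ — is exactly what guarantees that the chain $C_1,\dots,C_n$ consists of $n$ \emph{distinct} octahedra and that distinct outer vertices of $\mc{L}$ project to distinct cusps, which together with \ref{cond:fal2} (so that $D$ meets no planar cusps, hence the outer vertices cannot be identified with lifts of other planar-cusp strands) yields condition (2): the $\sim$-class of an outer vertex stays among outer vertices. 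Condition (3) on rim faces follows from the same input: the rim faces are the images of the reflection-plane disc $D$ (on the two sides), and because $D$ is embedded and punctured only by augmentation cusps, the face-identifications induced by the side-pairings send rim faces of $\mc{L}$ only to other rim faces of $\mc{L}$. Condition (4) is immediate: the reflection plane of $\mf{l}$ is the required geodesic plane of symmetry, it contains $\omega$ (lifted to $\infty$) and all the augmentation-cusp punctures of $D$ (the outer vertices), and the octahedral decomposition is built to be symmetric under it.

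With the four conditions verified, \zcref{thm:concave_lens} applies and produces the smooth path $M_\theta$ of cone manifolds with cone arcs joining the outer vertices of $\mc{L}$ along the rim faces. It remains to identify the geometric picture on the visual side: the rim faces live in the totally geodesic surface obtained from the two copies of $D$, so the cone arcs lie in the geodesic surface bounded by $\omega$ and run between the augmentation-cusp punctures, as claimed; and as $\theta \to 0$ the longitudinal length of $\omega$ has been elongated to infinity in the transverse direction, so $\omega$'s cusp cross-section degenerates into a configuration of thrice-punctured (more precisely, multiply-punctured) pieces on the conformal boundary, exactly the behaviour asserted. I expect the main obstacle to be the careful bookkeeping of the face- and vertex-identifications on the boundary of $\mc{L}$ — making sure that conditions \ref{cond:fal1} and \ref{cond:fal2} are precisely strong enough to force the $\sim$-classes of outer vertices and of rim faces to stay inside $\mc{L}$, with no unexpected coincidence coming from a remote part of the link diagram — rather than any new geometric input, since the existence of the cone deformation itself is already packaged in \zcref{thm:concave_lens}.
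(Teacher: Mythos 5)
Your overall strategy matches the paper's: decompose along the reflection plane, locate the lens around the disc $D$, check the four hypotheses of \zcref{thm:concave_lens}, and get condition (4) from the reflection symmetry. But there is a genuine gap at the very first step, and it is precisely where the paper's proof does its real work. You assert that, placing $\omega$ at $\infty$, ``the two copies of $D$ contribute a cyclic chain of octahedra all sharing the vertex at $\infty$.'' In the standard chequerboard/circle-packing decomposition this is false: the face of the right-angled ideal polyhedron corresponding to $D$ is a circle $C$ whose tangency points alternate between points descending to $\omega$ and points descending to the augmentation cusps, so if $D$ is punctured $n$ times then $\omega$ is represented by $n$ \emph{distinct} ideal vertices $v_1,\dots,v_n$ on $C$, only one of which can be sent to $\infty$. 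The remaining $n-1$ lifts of $\omega$ sit at finite points equivalent to $\infty$, which is exactly the configuration that breaks the deformation (compare \zcref{ex:loebell} and the hypothesis in \zcref{thm:concave_lens} that $v$ lifts to a \emph{single} vertex). The paper repairs this with a cut-and-glue rearrangement analogous to \zcref{cons:cut_glue}: using the parabolics $A_i$ that pair the faces meeting at the augmentation tangency points $u_i$, one cuts $D$ along domes orthogonal to $C$ and translates the pieces by $A_i\cdots A_1$ so that all the $v_i$ are consolidated into one vertex while the $u_i$ (now split into pairs) remain on $C$. Your hypotheses \ref{cond:fal1} and \ref{cond:fal2} are used exactly to guarantee this rearrangement is possible and stays on $C$; without carrying it out, conditions (1)--(3) cannot even be stated for a well-defined lens.

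A secondary inaccuracy: the octahedra of the lens are not supplied by a pre-existing global octahedral decomposition of the fully augmented link complement (your ``one octahedron per edge of the reflection-plane graph per side'' is not the standard Agol--Thurston output, which is two right-angled ideal polyhedra, not octahedra in general). In the paper they are manufactured locally, by intersecting a horoball neighbourhood of each outer vertex on $C$ with the faces of the doubled polyhedron $D^*\cup\overline{D^*}$ and coning to $v$. Once the single-vertex rearrangement and this local construction are in place, your verification of the four conditions goes through essentially as you describe.
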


\begin{figure}\centering
  \begin{subfigure}[c]{0.4\textwidth}\centering
    \centering
    \labellist
    \small\hair 2pt
    \pinlabel {$\ast$} [l] at 88 224
    \pinlabel {$\dagger$} [l] at 132 224
    \pinlabel {$\ddagger$} [r] at 182 224
    \endlabellist
    \includegraphics[height=.2\textheight]{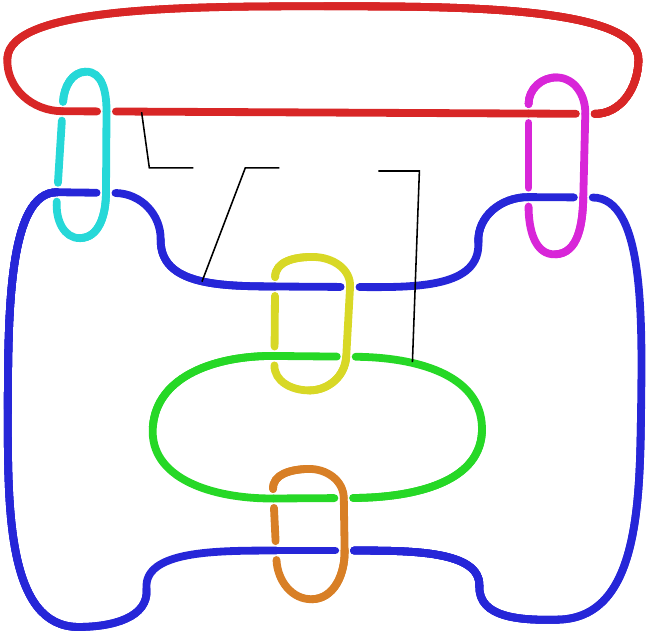}
    \caption{Link in $ \Sph^3$.\label{fig:robot_link}}
  \end{subfigure}\hfill%
  \begin{subfigure}[c]{0.55\textwidth}\centering
    \includegraphics[height=.2\textheight]{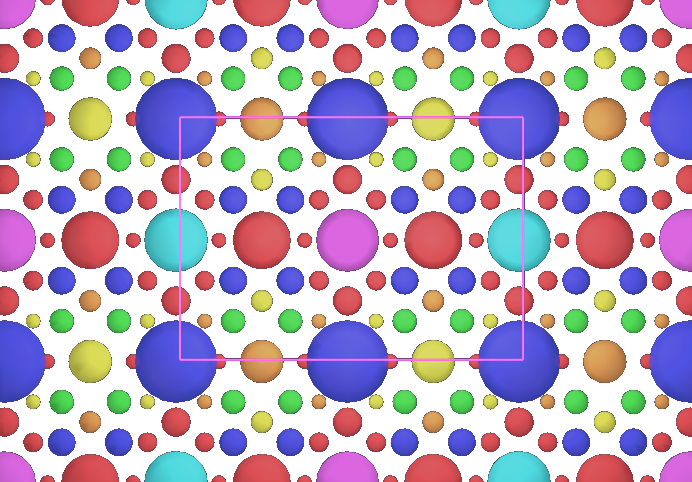}
    \caption{Cusp horoballs with $\ast$ at $ \infty $.\label{fig:robot_cusps_red}}
  \end{subfigure}\\[.5em]%
  \begin{subfigure}[c]{0.45\textwidth}\centering
    \includegraphics[height=.15\textheight]{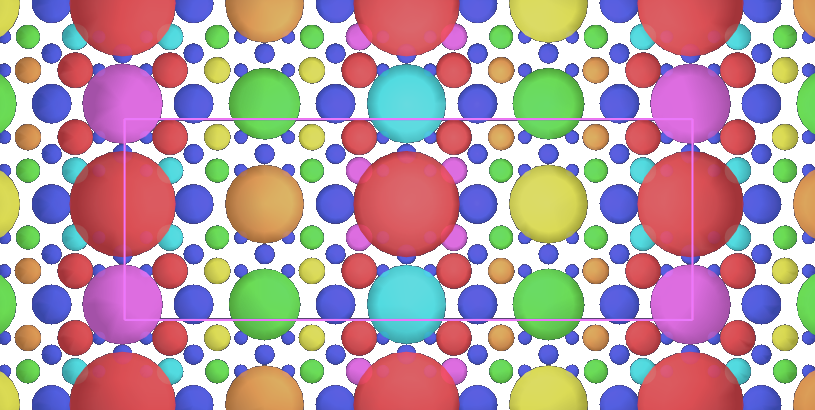}
    \caption{Cusp horoballs with $\dagger$ at $ \infty $.\label{fig:robot_cusps_blue}}
  \end{subfigure}\hfill%
  \begin{subfigure}[c]{0.45\textwidth}\centering
    \includegraphics[height=.15\textheight]{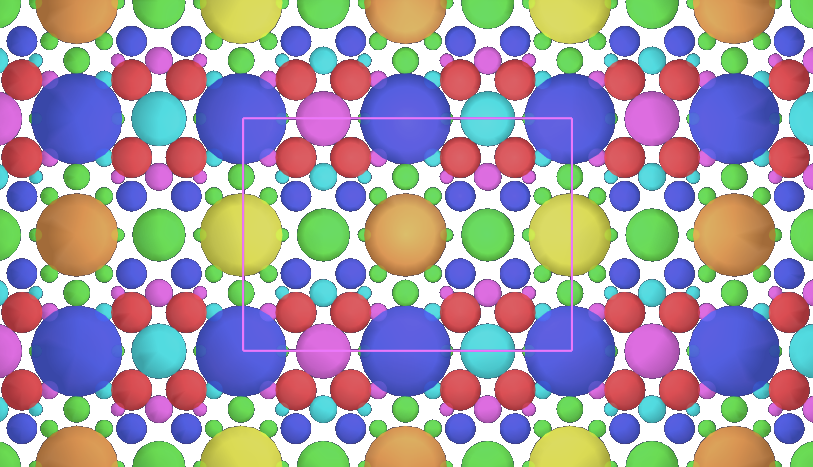}
    \caption{Cusp horoballs with $\ddagger$ at $ \infty $.\label{fig:robot_cusps_green}}
  \end{subfigure}
  \caption{An octahedral fully augmented link.\label{fig:robot}}
\end{figure}

\begin{figure}
  \centerline{\begin{subfigure}[c]{0.5\textwidth}\centering
    \includegraphics[height=.18\textheight]{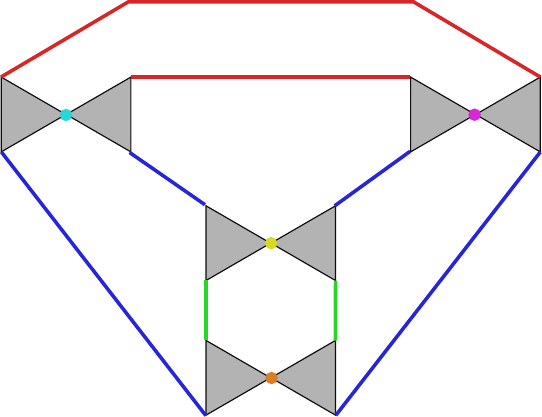}
    \caption{Chequerboard pattern.\label{fig:chequerboard1}}
  \end{subfigure}\hspace{.2cm}
  \begin{subfigure}[c]{0.5\textwidth}\centering
    \includegraphics[height=.18\textheight]{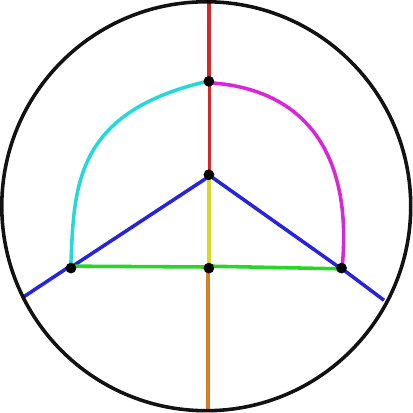}
    \caption{Coloured dual graph.\label{fig:chequerboard2}}
  \end{subfigure}}\vspace{.1cm}
  \centerline{\begin{subfigure}[c]{0.5\textwidth}\centering
    \labellist
    \small\hair 2pt
    \pinlabel {$v_1$} [t] at 100 195
    \pinlabel {$u_1$} [br] at 130 130
    \pinlabel {$v_2$} [b] at 100 118
    \pinlabel {$u_2$} [bl] at 70 130
    \pinlabel {$C$} [l] at 77 170
    \endlabellist
    \includegraphics[height=.18\textheight]{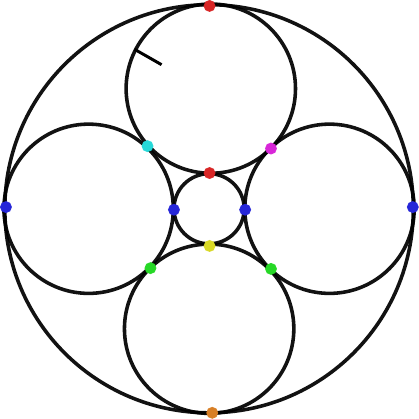}
    \caption{Circle pattern from the incidence graph.\label{fig:chequerboard3}}
  \end{subfigure}\hspace{.2cm}
  \begin{subfigure}[c]{0.5\textwidth}\centering
    \labellist
    \small\hair 2pt
    \pinlabel {$v_2$} [b] at 109 120
    \pinlabel {$u_1$} [b] at 170 120
    \pinlabel {$u_2$} [b] at 49 120
    \pinlabel {$C$} [b] at 204 120
    \endlabellist
    \includegraphics[height=.18\textheight]{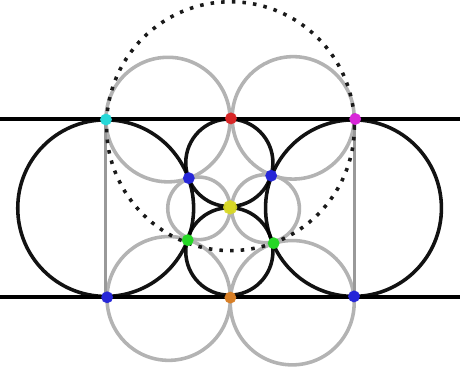}
    \caption{Circle pattern after moving $v_1$ to $\infty$.\label{fig:chequerboard4}}
  \end{subfigure}}
  \caption{The circle pattern arising from the link in \zcref{fig:robot}, illustrating the proof of \zcref{prp:fal}. The red vertices, corresponding to the cusp $ \ast $, will be combined and form the central vertex $v$.\label{fig:chequerboard}}
\end{figure}

\begin{figure}
  \centering
  \labellist
  \small\hair 2pt
  \pinlabel {$v_1$} [b] at 187 437
  \pinlabel {$v_2$} [bl] at 332 345
  \pinlabel {$v_3$} [l] at 379 182
  \pinlabel {$v_4$} [t] at 272 48
  \pinlabel {$u_1$} [bl] at 270 402
  \pinlabel {$u_2$} [l] at 372 272
  \pinlabel {$u_3$} [tl] at 337 109
  \pinlabel {$A_1$} [tr] at 222 319
  \pinlabel {$A_2$} [r] at 271 243
  \pinlabel {$A_3$} [br] at 253 162
  \pinlabel {$C$} [bl] at 28 134
  \endlabellist
  \includegraphics[width=.4\textwidth]{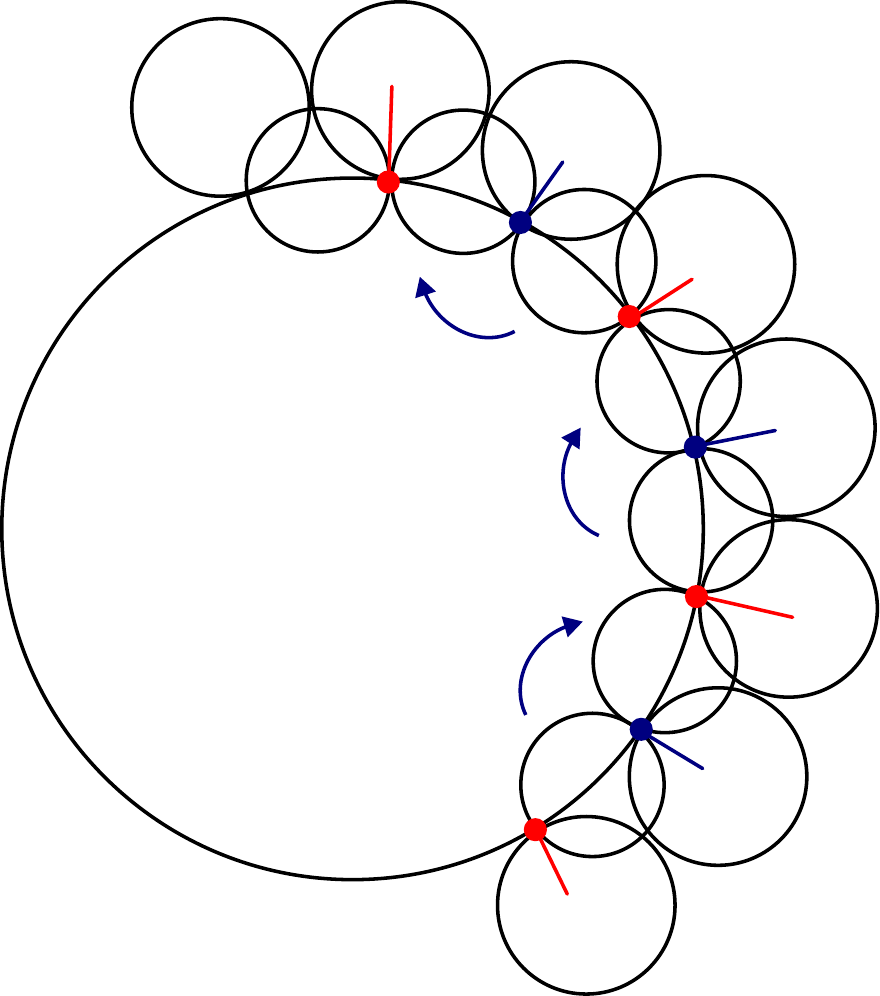}
  \caption{Notation for proof of \zcref{prp:fal}.\label{fig:halo}}
\end{figure}

The proof will be illustrated using the fully augmented link in \zcref{fig:robot_link}.
\begin{proof}
  We will decompose the link $ \mf{l} $ using the so-called \emph{method of chequerboards}; we assume that the reader is somewhat familiar with this procedure,
  which is originally described in the appendix by Agol and Thurston to Lackenby~\cite{lackenby00hal} and which is further explained in \cite{futer07,purcell09,ibarra25,ham23}.

  Let $ \Pi $ be the reflection plane of $ \mf{l} $. Cutting along $ \Pi $ and then cutting along each thrice-punctured sphere transverse to $\Pi $ produces a
  chequerboard pattern as in \zcref{fig:chequerboard1} (compare Figure~2 of \autocite{purcell09}). Contract each triangle into a vertex and interpret each rank $1$
  cusp as an edge. This gives a trivalent graph $\Gamma$ together with an edge-colouring, a map $ \chi : E(\Gamma) \to \mathrm{Cusps}(\mf{l}) $. The conditions
  \ref{cond:fal1} and \ref{cond:fal2} imply that there is a simple edge-cycle $ e_1,\ldots,e_r $ in $ \Gamma $ such that:
  \begin{enumerate}
    \item If $ i $ is odd, then $ \chi(e_i) = \omega $; conversely, if $ \chi(e) = \omega $ for any $ e \in E(\Gamma) $, then $ e = e_i $ for some odd $i$, and
    \item If $ e \in E(\Gamma) $ and $ \chi(e) = \chi(e_i) $ for some $ e_i $, then $ e = e_j $ for some $j$.
  \end{enumerate}
  For instance, in \zcref{fig:chequerboard1} there is a loop with red edges (corresponding to the cusp $ \ast $) alternating with edges of different colours,
  and the total set of colours in this loop appears nowhere else in the edge colouring.

  Take the dual graph of $ \Gamma $ (\zcref{fig:chequerboard2}) and construct the circle pattern $P$ with tangency arrangement given by the graph (\zcref{fig:chequerboard3});
  tangency points inherit the colouring map $\chi$. This circle pattern has a dual pattern $P^\perp$ of orthogonal circles; take the right-angled polyhedron in $ \H^3 $ bounded by the domes above
  the circles in the pattern and above the dual circles, and two copies of this polyhedron glue to give $ \Sph^3 \setminus \mf{l} $: the domes above the dual circles in $ P^\perp $
  are in correspondence with the triangular faces of the chequerboard pattern and glue in pairs to form halves of the discs bounded by augmentation cusps, and the domes
  above our original circles in $ P $ glue across $ \Pi $ onto the corresponding faces of the second copy of the polyhedron.

  From our discussion above, there is a distinguished circle $C$ in $ P $ so that the points of tangency around its circumference alternate between points descending to $ \omega $
  and points which do not. Let $ D $ be one of the two right-angled ideal polyhedra just constructed and pick a vertex of $ D $ that descends to $ \omega $, say $ v_1 $. Then
  label all the vertices of $ D $ around $ C $ as $ v_1, u_1, v_2, u_2, \ldots, v_n, u_n $ where each $ v_i $ descends to $ \omega $. By the discussion of the face-pairing structure on $ D $
  in the previous paragraph, each $ u_i $ is the point of tangency of two faces of $ D $, orthogonal to $C$, paired by a parabolic. Label by $ A_i $ the parabolic pairing the two faces
  above $ u_i $. It follows that, for each $ i $, the map $ A_i \cdots A_1 $ sends $ v_{i+1} $ to $ v_1 $ (see \zcref{fig:halo}). For each $ i > 1 $, there is a circle orthogonal to $C$
  that passes through $ u_{i} $ and $ u_{i+1} $. Cut $D$ along the dome above this circle, and translate the piece under this dome using $ A_i \cdots A_1 $. This procedure, analogous
  to that in \zcref{cons:cut_glue}, rearranges pieces of $ D $ so that it only has a single vertex labelled $ \omega $, at the expense of splitting all the vertices $ u_i $ into pairs.
  Since each $ A_i $ preserves the circle $ C $, the images of the $ u_i $ still lie on the circle $ C $ after this cutting and gluing. In the running example, only one cut and glue need
  be done, along the dark dotted circle in \zcref{fig:chequerboard4}, to produce a polyhedron realising the cusp picture in \zcref{fig:robot_cusps_red} (after doing the inversion across
  the dotted circle, the pattern of coloured vertices of \zcref{fig:chequerboard4} becomes a subset of the dot pattern of \zcref{fig:robot_cusps_red} but rotated by 90 degrees).

  After this procedure has been done, we have produced a new right-angled ideal polyhedron, say $ D^* $. Glue $ D^* $ to a copy $\overline{D^*}$ of itself along the
  face supported on the dome above $ C $. The result is a fundamental polyhedron $D^* \union \overline{D^*}$ for $ \Sph^3 \setminus \mf{l} $, such that the lift
  of $ \omega $ is exactly one ideal vertex $v$. A concave lens can be constructed inside this polyhedron. Indeed, the geodesic plane formed by the dome above $C$
  contains $ v $ and the ideal vertices on the dome are only glued to other vertices on the dome by the side-pairing group; above each of these vertices there is an
  ideal octahedron, obtained by intersecting a horoball neighbourhood of each vertex on $ C $ with the faces of $ D^* \union \overline{D^*} $ and coning the results to $ v $.

  That this concave lens satisfies the conditions (1)--(3) of \zcref{thm:concave_lens} follows from the observations earlier in the proof about the ideal vertices on $ C $. Condition (4)
  is satisfied since all of the polyhedra are symmetric across the dome above $ C $ (since the whole fundamental domain is defined by such a reflection).
\end{proof}

\begin{ex}
  We consider the remaining planar cusps of \zcref{fig:robot}. The cusp $ \dagger $, \zcref{fig:robot_cusps_blue}, does not satisfy the hypotheses of \zcref{thm:concave_lens} as both punctured discs it bounds
  are punctured by longitudes of other planar cusps. The cusp $ \ddagger $, \zcref{fig:robot_cusps_green}, bounds exactly one punctured disc meeting the criteria and hence does satisfy the hypotheses of the theorem.
\end{ex}

\begin{rem}\label{rem:fal_longit}
  By adapting the argument used to prove \zcref{prp:fal}, one can show a dual result. Let $ \mf{l} $ be a fully augmented link $ \mf{l} $ with a planar cusp $ \omega_0 $
  such that:
  \begin{enumerate}[label={(\roman*)}]
    \item There are no self-augmentations of $ \omega_0 $, and
    \item One of the two discs, $D$, bounded by $ \omega_0 $ must be punctured only longitudinally by non-self-augmented planar cusps.
  \end{enumerate}
  Then $\omega_0$ forms the centre of a concave lens satisfying the conditions of \zcref{thm:concave_lens}, and increasing the meridional length of $\omega_0$
  forms cone arcs in the geodesic surface $D$ joining the augmentation cusp punctures together.

  To prove this, consider the edge-coloured graph $\Gamma$ as in the proof of \zcref{prp:fal}. Let $ \omega_1,\ldots,\omega_k $ be the planar
  cusps puncturing $D$. The assumption (ii) implies that there is an edge-cycle in $ \Gamma $ for each $ i $ that is coloured alternately by $ \omega_i $
  and by augmentation cusps. Take the dual circle pattern $ P $; there is a circle $ C_i $ in $ P $ for each $ \omega_i $, as well as a circle $ C_0 $ for
  the disc bounded by $ \omega_0 $ that is \textit{not} $D$. All these circles $ C_i $ for $ 0 \leq i \leq k $ are tangent to a circle $C$ corresponding to the
  disc $D$. Using products of side-pairing parabolics in each circle $ C_i $, we may cut and glue all the $\omega_i$-tangency points on the circles onto the
  single $ \omega_i$-point tangent to the circle $C $ (i.e.\ we do the `glue by $ A_i \cdots A_1 $' step in the proof of \zcref{prp:fal} for each circle $ C_i $).
  Since all of these side-pairings preserve the circle $C_i $, they do not introduce new vertices onto the circle $C$ (it is only moving three vertices, and two
  of those get moved onto $ C_i $ away from the tangency point). Thus, after gluing, we obtain a fundamental polyhedron such that the face supported on the circle $ C $
  meets ideal vertices corresponding to $ \omega_i $ for $ 0 \leq i \leq k $, and no other ideal vertices of polyhedron descend to $\omega_i$. This is enough
  to get a concave lens centred at $ \omega_0 $ as described in the final paragraph of the proof of \zcref{prp:fal}.
\end{rem}

\begin{rem}\label{rem:hk_fal}
  As with the lantern manifolds and stacked Borromean rings, every fully augmented link admits an order $2$ symmetry which cuts the angle along the proposed
  cone axis in half. However, this symmetry is a reflection and not a rotation of angle $ \pi $. Thus the alternative argument in \zcref{rem:hk_deformation}
  via Wei\ss' theorem does not apply as, after quotienting, the cone arc is embedded in a mirror and so the local isotropy groups are not orientation-preserving.
\end{rem}

\subsection{Further examples}
We end the paper with a final string of examples showing interesting phenomena that we do not have space to explore at length. In each case one can read off
a relevant concave lens from the provided cusp picture, as in \zcref{rem:lens_pictures}.

\begin{figure}
  \begin{subfigure}[c]{0.3\textwidth}
    \centering
    \labellist
    \small\hair 2pt
    \pinlabel {$\ast$} [r] at 25 263
    \endlabellist
    \includegraphics[width=\textwidth]{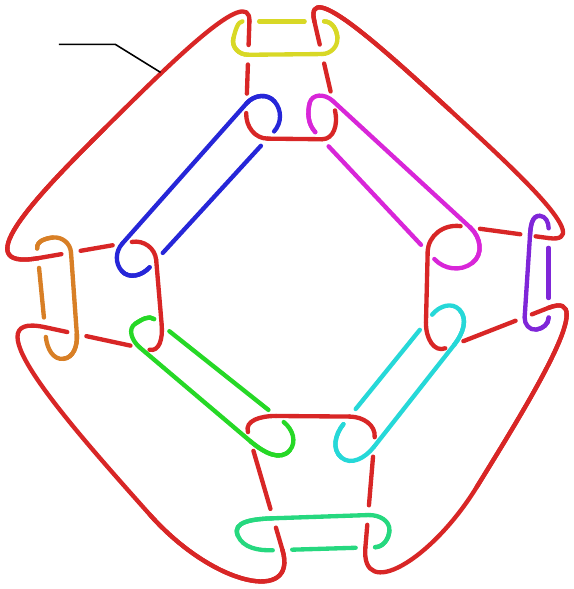}
    \caption{Link in $ \Sph^3$.\label{fig:loebell_link}}
  \end{subfigure}\\[.2cm]
  \begin{subfigure}[c]{\textwidth}
    \includegraphics[width=\textwidth]{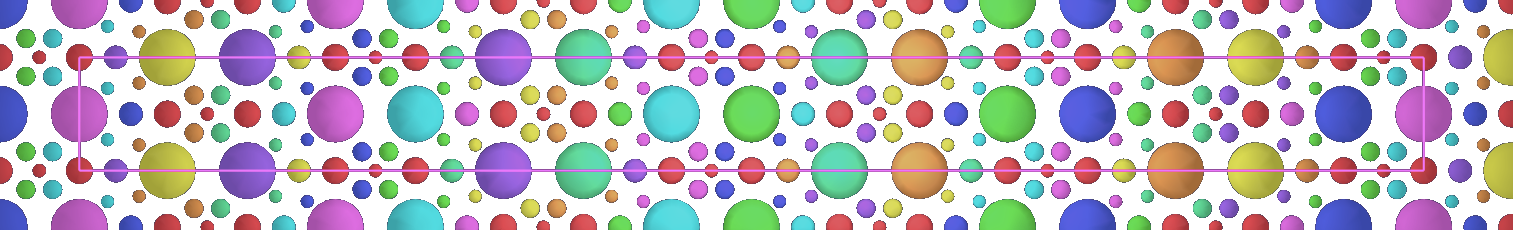}
    \caption{Cusp horoballs with $\ast$ at $ \infty $.\label{fig:loebell_cusps}}
  \end{subfigure}
  \caption{The L\"obell link $L(4)$.\label{fig:loebell}}
\end{figure}
\begin{ex}\label{ex:loebell}
  This example will show that the non-self-augmentation condition of \zcref{prp:fal} cannot be removed. In \zcref{fig:loebell} we show the L\"obell
  link $ L(4) $, see Chesebro, DeBlois, and Wilton~\cite[\S7.2]{chesebro12}. Since the link is fully augmented, there is an embedded geodesic surface spanned by the long planar cusp $ \ast $.
  However, it meets that cusp multiple times and as a consequence there are tetrahedra in the triangulation produced by the algorithm in the proof of \zcref{prp:fal}
  that contain two vertices descending to $\ast$; it follows that there is no concave lens in this triangulation which has a central cusp corresponding to $ \ast $.
  One can see in the horoball diagram there are vertices (in red) descending to $ \ast $; following the proof of \zcref{thm:concave_lens}, the vertex at infinity
  corresponding to $\ast$ would remain parabolic while the other (equivalent) vertices would be pulled into ideal quadrilaterals, completely breaking the side-pairing
  combinatorics. In other words, no version of our cone deformations can be done along $\ast$.
\end{ex}

\begin{figure}
  \begin{subfigure}[c]{0.4\textwidth}
    \includegraphics[width=\textwidth]{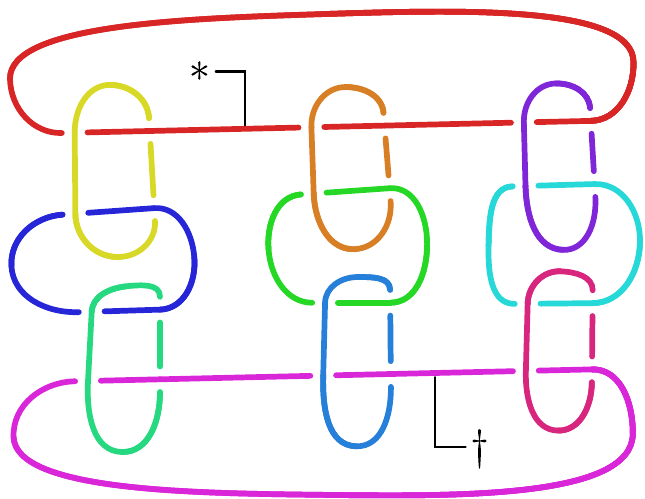}
    \caption{Link in $ \Sph^3 $.\label{fig:11cpt_link}}
  \end{subfigure}\hfill
  \begin{subfigure}[c]{0.55\textwidth}
    \includegraphics[width=\textwidth]{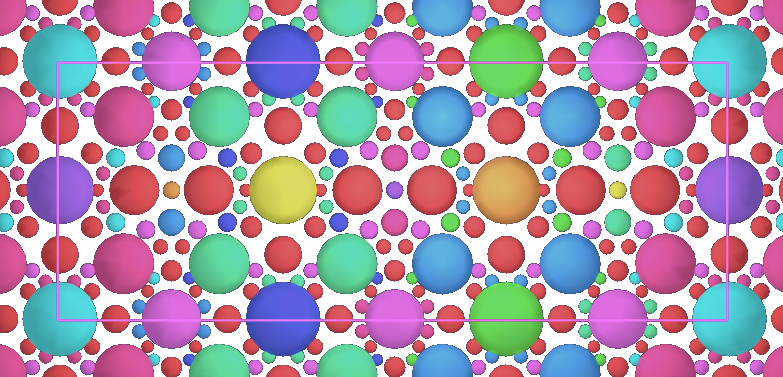}
    \caption{Cusp horoballs with $\ast$ at $ \infty $.\label{fig:11cpt_cusps}}
  \end{subfigure}
  \caption{The augmented link from \zcref{ex:11cpt}.\label{fig:11cpt}}
\end{figure}

\begin{ex}\label{ex:11cpt}
  We now show a fully augmented link with two distinct cone-deformations centred at the same cusp. Consider the link of $11$ components drawn in \zcref{fig:11cpt_link}.
  \begin{itemize}
    \item The surface spanned by the cusp $ \ast $ which is punctured thrice by augmentation cusps is the centre of a concave lens satisfying the conditions of \zcref{thm:concave_lens}.
          It can be deformed to produce six thrice-punctured spheres in total, two from each augmentation cusp. It is this lens which is centred in \zcref{fig:11cpt_cusps}.
    \item The other surface bounded by $ \ast $ in the reflection plane of the link is punctured by four longitudes and is the centre of a concave lens satisfying the conditions of \zcref{rem:fal_longit};
          this lens is visible along the top and bottom horizontal edges of the rectangle in \zcref{fig:11cpt_cusps}.
          It can be deformed to obtain three new cone arcs so that, in the limit, the cusp $ \dagger $ splits into three thrice-punctured spheres.
  \end{itemize}
\end{ex}

\begin{figure}
  \begin{subfigure}[c]{0.4\textwidth}
    \includegraphics[width=\textwidth]{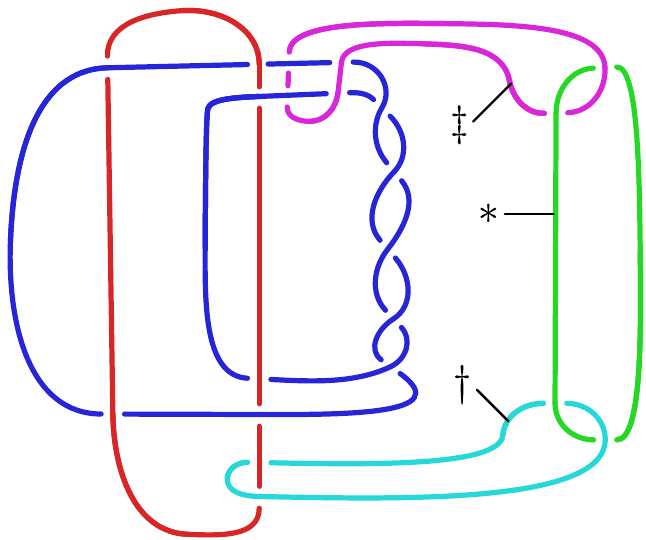}
    \caption{Link in $ \Sph^3$.\label{fig:l11a202mod_link}}
  \end{subfigure}\hfill
  \begin{subfigure}[c]{0.49\textwidth}
    \includegraphics[width=\textwidth]{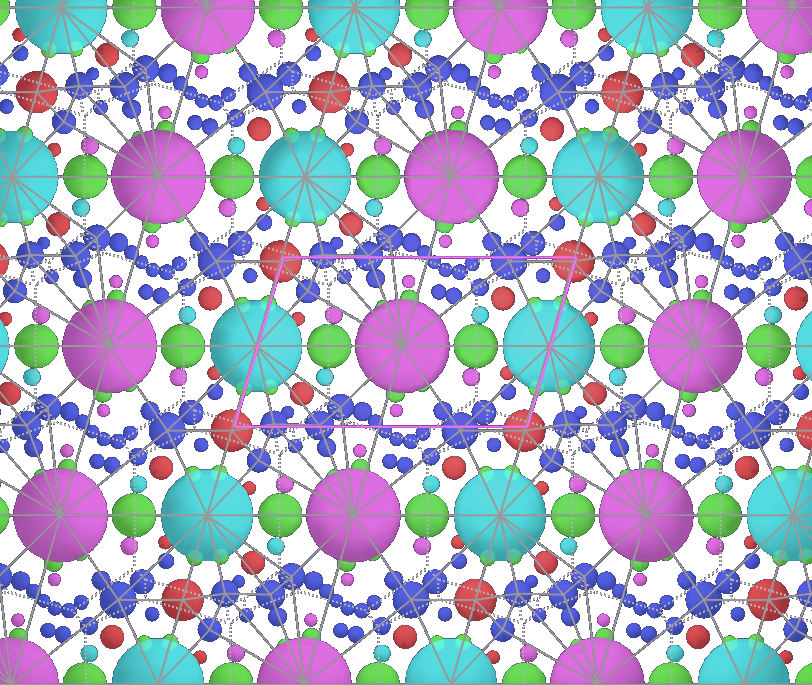}
    \caption{Cusp horoballs with $\ast$ at $ \infty $.\label{fig:l11a202mod_cusp_1}}
  \end{subfigure}\\[.5em]
  \begin{subfigure}[c]{0.49\textwidth}
    \includegraphics[width=\textwidth]{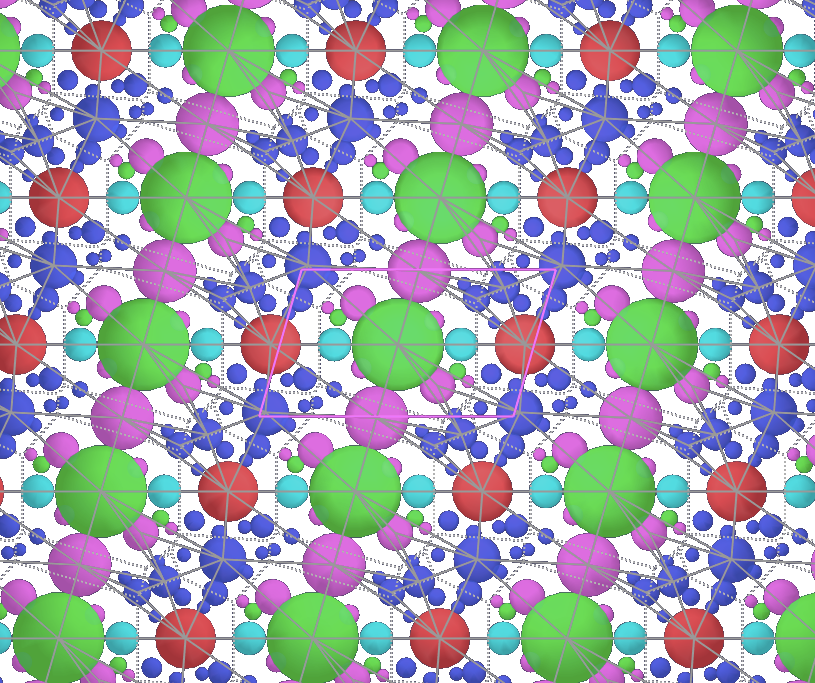}
    \caption{Cusp horoballs with $\dagger$ at $ \infty $.\label{fig:l11a202mod_cusp_2}}
  \end{subfigure}\hfill
  \begin{subfigure}[c]{0.49\textwidth}
    \includegraphics[width=\textwidth]{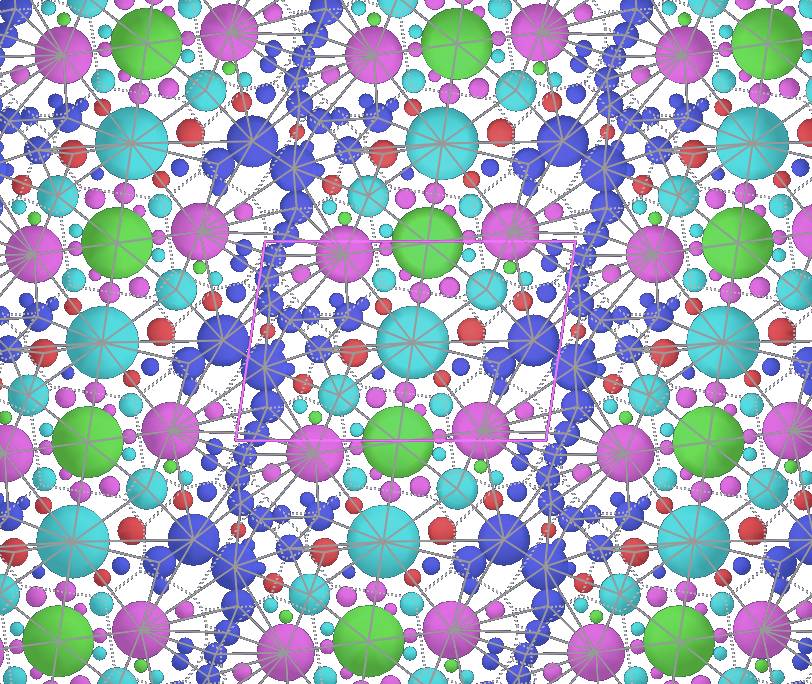}
    \caption{Cusp horoballs with $\ddagger$ at $ \infty $.\label{fig:l11a202mod_cusp_3}}
  \end{subfigure}
  \caption{A link obtained from \href{https://katlas.org/wiki/L11a202}{\texttt{L11a202}}.\label{fig:l11a202mod}}
\end{figure}
\begin{ex}
  We consider now an example of a link with twist regions, to show that it is not enough for the cusp being elongated to be untwisted.
  In \zcref{fig:l11a202mod} we show a link obtained by adding a chain of three cusps to \href{https://katlas.org/wiki/L11a202}{\texttt{L11a202}}. We can
  see cusp horoballs which give the outer vertices of a concave lens in \zcref{fig:l11a202mod_cusp_1,fig:l11a202mod_cusp_2} but not in \zcref{fig:l11a202mod_cusp_3},
  even though all of the marked cusps bound embedded discs punctured only by meridians.
\end{ex}

\appendix
\section{The polyhedron theorem for cone manifolds}\label{sec:poincare}
We describe here a generalisation for cone manifold groups of the Poincar\'e polyhedron theorem which can be proved by following the usual proof found, for example, in Maskit~\cite[\S IV.H]{maskit}.
It is certainly well-known to experts, and was used implicitly in Elzenaar~\cite{elzenaar24c}; its advantage in studying cone manifolds over the usual techniques via Thurston's gluing equations is
that it gives an explicit presentation for the holonomy group. For further discussion on different polyhedron theorems see Epstein and Petronio~\cite{epstein94}.

Let $ \mathbb{X} $ be one of $ \H^n $, $ \mathbb{E}^n $, or $ \Sph^n $ where $ n \geq 2 $ and let $ \mathbb{G} $ be the isometry group of $ \mathbb{X} $.
Suppose that $ D \subset \mathbb{X} $ is an open polyhedron, i.e.\ a set that is locally an intersection of half-spaces in $ \mathbb{X} $. Assume also that $ D $ has
countably many faces, and is locally finite, i.e.\ the link of each vertex is a finite-sided polyhedron. For every facet $s$ (codimension $1$ face) of $ D $, suppose
that there is a facet $ s' $ of $ D $ (not necessarily distinct from $s$) and an element $ g_s \in \mathbb{G} $ satisfying the following conditions:
\begin{enumerate}[label={(\roman*)}]
  \item $ g_s(s) = s' $.\label{item:ss}
  \item $ g_{s'} = g_s^{-1} $. \label{item:rel1}
  \item There is an open neighbourhood $U$ of $s$ in $D$ such that $ g_s(U) \inter D = \emptyset $.
\end{enumerate}
The side-pairing structure thus defined induces an equivalence relation on $ \overline{D} $ (in the case $ \mathbb{X} = \H^n $, $ \overline{D} $ also includes the boundary
at infinity of $ D $) which is the identity on $ D $.  Let $ D^* $ be the quotient of $ \overline{D} $ by this relation with projection map $ p : \overline{D} \to D^* $.
\begin{enumerate}[label={(\roman*)},resume]
  \item For every $ z \in D^* $, $ p^{-1}(z) $ is a finite set.\label{item:is_finite}
\end{enumerate}

A \df{generalised edge} of $ D $, henceforth just \df{edge}, is either a codimension $2$ face, or a point of tangency of two faces of $ D $ (meaningful only in the case $ \mathbb{X} = \H^n $, where
it behaves as an `edge of length $0$' with dihedral angle $0$). For such an edge $e$, pick arbitrarily a facet $ s_1 $ incident to $ e_1 = e $. Then $ s_1' $ is adjacent to the translate $ g_{s_1}(e_1) $. Now for
each $ i $ let $ s_i $ be the facet incident to $ e_i $ that is distinct from $ s'_{i-1} $. Let
\begin{displaymath}
  n = \min \{ i \in \N : s_i = s_1\; \text{and}\; i > 1 \};
\end{displaymath}
this minimum exists by \ref{item:is_finite}. The \df{edge cycle} of $ e $ is $ \{ e_1,\ldots,e_{n-1} \} $. The \df{cycle transformation} $ h_e $, is by definition, $ g_{s_{n-1}} \cdots g_{s_1} $.
For each edge $ e $ of $ D $ let $ \alpha(e) $ be the dihedral angle of $D$ across $e$ and set
\begin{displaymath}
  \theta(e) = \sum_{m=1}^n \alpha(e_m)
\end{displaymath}
where $ \{e_1,\ldots,e_n\} $ is the edge cycle of $ e $.
\begin{enumerate}[label={(\roman*)},resume]
  \item The element $ h_e \in \mathbb{G} $ is a rotation through an angle $ \theta(e) $ if $ \theta(e) > 0 $. If $ \theta(e) = 0 $ then
        necessarily $ \mathbb{X} = \mathbb{H}^n $; in this case $ h_e $ must be parabolic.\label{item:rel2}
\end{enumerate}

\begin{thm}
  Suppose that $ D \subset \mathbb{C} $ is a polyhedron with side-pairing maps $ (g_s) $ satisfying conditions \ref{item:ss}--\ref{item:rel2} above. Let $ G = \langle g_s : s\;\text{a side of}\;D \rangle $.
  Then $ D^* $ admits a cone manifold structure locally modelled on $ \mathbb{X} $, the singular locus of $ D^* $ is the projection of the set of edges $e$ of $ D $ with $ \theta(e) \not\in \{0,2\pi\} $,
  and $ G = \Hol(D^*) $. The relations induced by \ref{item:rel1} and \ref{item:rel2} give a complete set of relations for $ G$.
\end{thm}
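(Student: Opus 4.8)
The plan is to run Maskit's proof of the Poincar\'e polyhedron theorem~\cite[\S IV.H]{maskit}, carrying the dihedral-angle data through every local computation. Three things must be established: that $D^*$ carries an $(\mathbb{X},\mathbb{G})$-cone structure with the stated singular locus; that $\Hol(D^*)=G$; and that \ref{item:rel1} and \ref{item:rel2} impose a complete set of relations. Let $\Gamma$ be the group abstractly presented by the generators $\{g_s\}$ together with the relations forced by \ref{item:rel1} and \ref{item:rel2} --- that is, $g_s g_{s'}=1$, and for each edge $e$ the abstract-group consequences of ``$h_e$ is the rotation through $\theta(e)$'': namely $h_e=1$ when $\theta(e)=2\pi$, $h_e$ has order $q$ when $\theta(e)=2\pi p/q$ in lowest terms, and no relation when $\theta(e)/2\pi$ is irrational. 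Let $\phi\colon\Gamma\to G\le\mathbb{G}$ be the tautological epimorphism; the last assertion is precisely that $\phi$ is an isomorphism.

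First I would build the developing space $\widehat{M}=(\Gamma\times\overline{D})/{\approx}$, where $\approx$ identifies $(\gamma,x)$ with $(\gamma g_s,g_s^{-1}x)$ for every facet $s$ and every $x\in s$; by \ref{item:ss} and \ref{item:rel1} this is a well-defined equivalence relation, $\Gamma$ acts on $\widehat{M}$ by left translation, and $\widehat{M}/\Gamma\cong D^*$. Next I would exhibit a local $(\mathbb{X},\mathbb{G})$-cone chart at each point of $\widehat{M}$: interior points of tiles are trivial; at an interior point of a facet, the local-injectivity condition (existence of $U$ with $g_s(U)\cap U=\varnothing$) furnishes an embedded two-tile neighbourhood and hence an honest $\mathbb{X}$-chart; at a generalised edge $e$ one walks the edge cycle, which is finite by \ref{item:is_finite}, and the tiles around $e$ form a fan which closes up through the cycle transformation $h_e$, so by \ref{item:rel2} the local model is the $\mathbb{X}$-cone of cone angle $\theta(e)$ about a geodesic when $\theta(e)>0$, and a rank $1$ cusp (a horoball modulo $\langle h_e\rangle$ with $h_e$ parabolic) when $\theta(e)=0$ --- a manifold point exactly when $\theta(e)=2\pi$; finally, faces of codimension $\ge 3$, in particular vertices, are handled by induction on $\dim\mathbb{X}$, since the link of such a face is a finite spherical polyhedron carrying side-pairings inherited from the $g_s$ and finite edge cycles (local finiteness together with \ref{item:is_finite}), so the lower-dimensional case of the theorem identifies the link as a spherical cone-manifold and the cone structure extends over the face. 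Passing to the $\Gamma$-quotient, $D^*$ inherits this cone structure, and its singular locus is the image of $\{e:\theta(e)\notin\{0,2\pi\}\}$. This gives the first two conclusions.

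For the holonomy I would develop the regular part. Let $\widehat{M}^{\circ}=\widehat{M}\setminus\widehat{\Sigma}$ be the complement of the singular locus; the map $\operatorname{dev}\colon\widehat{M}^{\circ}\to\mathbb{X}$, $(\gamma,x)\mapsto\phi(\gamma)x$, is well defined by \ref{item:rel1} and \ref{item:rel2}, is a local isometry, and satisfies $\operatorname{dev}\circ\gamma=\phi(\gamma)\circ\operatorname{dev}$, so it is the developing map of the $(\mathbb{X},\mathbb{G})$-structure on the regular part $\widehat{M}^{\circ}/\Gamma$ of $D^*$ with holonomy $\phi$; in particular $\Hol(D^*)=\phi(\Gamma)=G$. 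To obtain the presentation I would prove that $\phi$ is injective by the tiling argument: the tiles $\{\gamma\cdot\overline{D}:\gamma\in\Gamma\}$ form a locally finite, face-to-face decomposition of $\widehat{M}$, and one shows by induction on combinatorial distance from the base tile $1\cdot\overline{D}$ that $\operatorname{dev}$ is injective on each combinatorial ball --- condition \ref{item:rel2} being exactly what makes the developed fans around the edges fit together without overlap. Together with the completeness of $\widehat{M}$ (each tile is a complete geodesic polyhedron, glued along complete faces) this makes $\operatorname{dev}$ a covering map of $\mathbb{X}$ minus the $G$-orbit $\Xi$ of the developed singular geodesics and cusps; consequently a nontrivial $w\in\ker\phi$ would carry the base tile to a distinct tile with the same $\operatorname{dev}$-image, contradicting injectivity of $\operatorname{dev}$ on any combinatorial ball containing both. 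Hence $\phi$ is an isomorphism, and \ref{item:rel1} and \ref{item:rel2} give a complete set of relations.

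The main obstacle is this last step: re-proving Maskit's local finiteness and no-overlap lemma in the presence of cone edges. Around a non-singular edge ($\theta(e)=2\pi$) the developed tiles close into a necklace exactly as in the classical case, but around a singular edge they wrap around the developed singular geodesic without closing up --- infinitely often when $\theta(e)/2\pi$ is irrational --- and, viewed as subsets of $\mathbb{X}$, they overlap; what has to be shown is that no overlap occurs in the punctured neighbourhood, i.e. that $\operatorname{dev}$ remains injective once one passes to $\widehat{M}^{\circ}$ mapping to $\mathbb{X}\setminus\Xi$, and this is done by tracking an angular coordinate modulo $\theta(e)$ along the edge cycle, which is precisely the content of \ref{item:rel2}. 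Once this bookkeeping is installed, the remainder --- completeness, path lifting, and the covering criterion for each of $\mathbb{H}^n$, $\mathbb{E}^n$, $\mathbb{S}^n$ with $n\ge 2$ --- is Maskit's argument verbatim; the only cosmetic difference is that $\mathbb{X}\setminus\Xi$ need not be simply connected, so $\operatorname{dev}$ is generally a nontrivial covering rather than a global homeomorphism, which is harmless since only injectivity on combinatorial balls is used.
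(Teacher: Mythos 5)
Your strategy is the same as the paper's (run Maskit's \S IV.H argument, carrying the angle data through every local computation), and the first two conclusions are handled correctly: the local cone charts at facets, generalised edges and lower-dimensional faces, and the identification of $\Hol(D^*)$ with $G$ via the developing map on the regular part, are sound. One smaller issue first: your parenthetical justification of completeness (``each tile is a complete geodesic polyhedron, glued along complete faces'') is not a proof --- completeness can fail at ideal vertices of $D$ even when every tile is complete, and this is exactly the one point where the paper's own proof does real work, using local finiteness of the link at an ideal vertex together with the parabolicity required by condition (v) to force Cauchy sequences into standard cusp neighbourhoods (Maskit, Proposition~IV.I.6). That argument needs to be reinstated rather than asserted.

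The serious gap is in the step you yourself flag as the main obstacle, and it cannot be repaired as proposed. Tracking an angular coordinate modulo $\theta(e)$ controls the fan of tiles around a single lifted edge, but says nothing about tiles that are combinatorially far apart: as soon as some $\theta(e)/2\pi$ is irrational the group $G$ is indiscrete, the translates $\phi(\gamma)\overline{D}$ accumulate and overlap densely in $\mathbb{X}$, the orbit $\Xi$ of the singular set need not be closed, and $\operatorname{dev}$ is neither injective on large combinatorial balls nor a covering of $\mathbb{X}\setminus\Xi$. The deduction that $\ker\phi$ is trivial therefore collapses. Worse, the conclusion itself fails at this level of generality: let $D\subset\mathbb{E}^2$ be the double of a Euclidean triangle across one of its sides, with side-pairings the two rotations $g_1,g_2$ about two of its vertices through the doubled interior angles, chosen to be irrational multiples of $2\pi$. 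Conditions (i)--(v) all hold, the relations from (ii) and (v) impose nothing, so the claimed presentation of $G=\langle g_1,g_2\rangle$ is the free group $F_2$; but every subgroup of $\Isom^+(\mathbb{E}^2)$ is metabelian, so $G\not\cong F_2$. The presentation clause thus needs an extra hypothesis (rational angles and discreteness, i.e.\ the orbifold case, where Maskit's covering argument genuinely applies) or a weaker formulation (a presentation of the deck group of the branched cover $\widehat M\to D^*$ rather than of $G\le\mathbb{G}$); no completion of the argument as written can establish it.
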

\begin{proof}
  One follows exactly the proof in Maskit~\cite[\S IV.H]{maskit}, except we have replaced the condition on completeness with the parabolic cycle condition of \ref{item:rel2}. To
  see that the latter condition gives completeness of the path metric, note that the manifold is trivially complete everywhere except vertices on the sphere at infinity. It is here that we use
  that $ D $ is locally finite---if a sequence that is Cauchy in the hyperbolic metric is converging to such a point then it must eventually enter a horoball around it. We may assume the link of
  the polyhedron that is cut out by this horoball is finite, and the proof of Proposition~IV.I.6 of Maskit~\cite{maskit} goes through.
\end{proof}

As mentioned in Elzenaar~\cite{elzenaar24c}, versions of the Maskit combination theorems also go through for cone manifolds since they depend
only on cutting and gluing fundamental domains and are just explicit versions of gluing results for general $ \mathrm{CAT}(0) $ spaces: for instance,
the first Maskit combination theorem is essentially found as Theorem~II.11.18 of Bridson and Haefliger~\cite{bridson_haefliger}, and the second as Proposition~II.11.21 \textit{op. cit}.
Both theorems just cited are global results on $ \mathrm{CAT}(0) $ spaces. To recover the corresponding polyhedron theorems, one takes the fundamental polyhedra
for the component groups being glued, produces a $ \mathrm{CAT}(0) $ space from each by tiling with the side-pairings (one can view this as an infinitely branched and highly singular cover
of $ \H^3 $), and then uses the global results on this tiled space---the proofs in \cite{bridson_haefliger} go by immediately slicing the space back up into fundamental polyhedra,
gluing those together locally, and then developing the result to obtain a  $ \mathrm{CAT}(0) $ space on which the amalgamated group acts.

\sloppy
\printbibliography

\end{document}